\documentclass[11pt,english,reqno]{amsart}
\usepackage{bm}
\usepackage{color}
\usepackage{babel}
\usepackage{verbatim}
\usepackage{amsthm}
\usepackage{amstext}
\usepackage{amssymb}
\usepackage{amsbsy}
\usepackage{ifpdf}
\usepackage[nobysame,abbrev,alphabetic]{amsrefs}
\usepackage{enumerate}
\usepackage{amsmath}
\usepackage{amscd}
\usepackage{amsfonts}
\usepackage{graphicx}
\usepackage[all]{xy}
\usepackage{verbatim}
\usepackage{gensymb}
\usepackage{mathrsfs}
\usepackage[colorlinks]{hyperref}
\hypersetup{
linkcolor=blue,          
citecolor=red,        
}
\newtheorem{theorem}{Theorem}[section]
\newtheorem{claim}[theorem]{Claim}
\newtheorem{lemma}[theorem]{Lemma}
\newtheorem{corollary}[theorem]{Corollary}
\theoremstyle{definition}\newtheorem{definition}[theorem]{Definition}

\newtheorem{conjecture}[theorem]{Conjecture}

\theoremstyle{theorem}\newtheorem{proposition}[theorem]{Proposition}

\theoremstyle{definition}

\theoremstyle{definition}
\theoremstyle{definition}\newtheorem{remark}[theorem]{Remark}
\theoremstyle{definition}\newtheorem*{acknowledgments}{Acknowledgments}
\newcommand{\al}{\alpha}

\newcommand{\Ga}{\Gamma}
\newcommand{\del}{\delta}

\newcommand{\Lam}{\Lambda}
\newcommand{\eps}{\epsilon}

\newcommand{\ka}{\kappa}

\newcommand{\om}{\omega}

\newcommand{\vphi}{\varphi}

\newcommand{\cH}{\mathcal{H}}

\newcommand{\cM}{\mathcal{M}}

\newcommand{\cP}{\mathcal{P}}

\newcommand{\bA}{\mathbb{A}}

\newcommand{\bP}{\mathbb{P}}
\newcommand{\bR}{\mathbb{R}}
\newcommand{\bZ}{\mathbb{Z}}
\newcommand{\bQ}{\mathbb{Q}}

\newcommand{\bN}{\mathbb{N}}
\newcommand{\bH}{\mathbb{H}}

\newcommand{\SL}{\operatorname{SL}}
\newcommand{\SO}{\operatorname{SO}}

\newcommand{\PGL}{\operatorname{PGL}}

\newcommand{\GL}{\operatorname{GL}}
\newcommand{\PSL}{\operatorname{PSL}}
\newcommand{\defi}{\overset{\on{def}}{=}}

\newcommand\norm[1]{||#1||}

\newcommand\set[1]{\left\{#1\right\}}
\newcommand\pa[1]{\left(#1\right)}

\newcommand\av[1]{|#1|}
\newcommand\on[1]{\operatorname{#1}}

\newcommand\mb[1]{\mathbf{#1}}

\newcommand\smallmat[1]{\pa{\begin{smallmatrix}#1\end{smallmatrix}}}

\newcommand{\wstar}{\overset{\on{w}^*}{\lra}}


\newcommand{\lra}{\longrightarrow}

\newcommand{\onto}{\xymatrix{\ar@{>>}[r]&}}







\newcommand{\QQ}{\mathbb{Q}}
\newcommand{\PP}{\mathbb{P}}

\newcommand{\HH}{\mathbb{H}}
\newcommand{\ZZ}{\mathbb{Z}}
\newcommand{\NN}{\mathbb{N}}

\newcommand{\RR}{\mathbb{R}}

\newcommand{\limfi}[2]{{\displaystyle \lim_{#1\to#2}}}
\newcommand{\pp}{\mathcal{P}}

\newcommand{\dt}{\mathrm{dt}}

\newcommand{\dmu}{\mathrm{d\mu}}

\newcommand{\flr}[1]{\left\lfloor #1\right\rfloor }
\newcommand{\nuga}{\nu_{\mathrm{Gauss}}}
\newcommand{\inv}[1]{\left(\mathbb{Z}/#1\mathbb{Z}\right)^{\times}}
\newcommand{\len}{\mathrm{len}}
\newcommand{\nicefrac}[2]{#1/#2}

\begin{document}
%
\title[Equidistribution of divergent orbits]{Equidistribution of divergent orbits and continued fraction expansion
of rationals}

\author{Ofir David and Uri Shapira}

\address{Department of Mathematics, Technion, Haifa, Israel }

\email{ushapira@tx.technion.ac.il}

\address{Department of Mathematics, Hebrew University, Jerusalem, Israel }

\email{ofir.david@mail.huji.ac.il}

\thanks{}
\maketitle

\begin{abstract}
We establish an equidistribution
result for push-forwards of certain locally finite algebraic measures
in the adelic extension of the space of lattices in the plane. As
an application of our analysis we obtain new results regarding the
asymptotic normality of the continued fraction expansions of most
rationals with a high denominator as well as an estimate on the length
of their continued fraction expansions. 

By similar methods we also establish a complementary result to Zaremba's conjecture. Namely, we show 
that given a bound $M$, for any large $q$, the number of rationals $p/q\in[0,1]$ for which 
the coefficients of the continued fraction expansion of $p/q$ are bounded by $M$ is $o(q^{1-\eps})$
for some $\eps>0$ which depends on $M$.
\end{abstract}

\maketitle

\section{Introduction}
\subsection{Continued fraction expansion of rationals}

We begin by describing the main application of our results. Let $T:(0,1]\to[0,1]$
denote the Gauss map $T(s):=\left\{ s^{-1}\right\} :=s^{-1}-\lfloor s^{-1}\rfloor$.
Let $\nuga=((1+s)\ln2)^{-1}ds$ denote the Gauss-Kuzmin measure on
$[0,1]$. A number $s\in(0,1]$ is rational if and only if $T^{i}(s)=0$
for some $i$ (in which case $T^{i+1}(s)$ is not defined). In this
case we denote this $i$ by $\len(s)$ which is the length of the
(finite) continued fraction expansion of $s$ (hereafter abbreviated
c.f.e). We also set 
\[
\nu_{s}=\frac{1}{\len(s)}\sum_{i=0}^{\len\left(s\right)-1}\delta_{T^{i}\left(s\right)}.
\]
Throughout we abuse notation and denote $$(\bZ/q\bZ)^{\times}=\set{1\le p\le q:\gcd(p,q)=1}.$$
 \begin{theorem}
\label{thm:main_app} 
There exist sets $W_q\subseteq \inv{q}$ with $\displaystyle{\lim_{q\to \infty}} \frac{|W_q|}{\varphi(q)}=1$, such that for any choice  of $p_q \in W_q$ we have that
\begin{enumerate}
\item $\frac{\len(p_q/q)}{2\ln(q)}\to \frac{\ln(2)}{\zeta(2)}$ where $\zeta$ is the Riemann zeta function.
\item\label{p:2} $\nu_{p_q/q}\wstar \nuga$
\end{enumerate}
\end{theorem}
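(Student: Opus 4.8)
The plan is to connect the combinatorics of continued fraction expansions of rationals $p/q$ to the geometry of diagonal orbits in the space of lattices. The key classical observation is that the c.f.e. of $p/q$ encodes the trajectory of the lattice point $\smallmat{p \\ q}$ (or the associated lattice $\ZZ\smallmat{p\\q}+\ZZ\smallmat{1\\0}$ suitably normalized) under the geodesic flow: each step of the Gauss map corresponds to crossing a fundamental domain under $a_t = \diag{e^{t},e^{-t}}$, and the partial quotients record the ``excursions'' into the cusp. More precisely, the divergent $A$-orbit through a rational point in $\SL_2$ (or its $\SL_2(\bZ)$ image) has a length, and the normalized c.f.e. length $\len(p/q)$ is comparable to the length of time the geodesic spends before diverging, which in the adelic picture is controlled by $\ln q$. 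So I would first set up the dictionary: a statement that for $p\in\inv q$, $\len(p/q)$ equals (up to an explicit bounded error) the return-time sum along the corresponding piece of geodesic orbit, and that the empirical measure $\nu_{p/q}$ on $[0,1]$ pushes forward to the empirical measure of the geodesic excursions, which via the natural $(0,1]\to X = \SL_2(\ZZ)\backslash\SL_2(\RR)$ map (continued fraction / Gauss map as a cross-section of geodesic flow) corresponds to a piece of the periodic-like orbit in the adelic space.

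Next, I would invoke the main equidistribution theorem of the paper (the adelic equidistribution of push-forwards of locally finite algebraic measures): the divergent diagonal orbits associated to the rationals $p/q$, when pushed forward and normalized, equidistribute in the adelic extension of the space of lattices toward the Haar measure as $q\to\infty$ along a density-one set of $p$. The strategy is a second-moment / variance argument: show that the average over $p\in\inv q$ of the empirical measure $\nu_{p/q}$ converges to $\nuga$ (this is essentially a counting statement about c.f.e.'s of rationals of denominator $q$, equivalently an equidistribution of the full packet of divergent orbits), and simultaneously control the variance so that for all but $o(\varphi(q))$ values of $p$ the individual $\nu_{p/q}$ is close to $\nuga$. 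The variance control is exactly where the adelic equidistribution with a rate (or at least equidistribution of pairs / a mixing-type input) is needed; one extracts from it that the empirical measures concentrate. A diagonal/exhaustion argument over a countable dense family of test functions on $[0,1]$ then upgrades ``close in finitely many test functions with high probability'' to ``weak-$*$ convergence along a density-one set $W_q$'', and simultaneously the same orbit-length information gives statement (1), since $\len(p_q/q)/(2\ln q)$ is, up to lower-order terms, the ergodic average of the roof function $s\mapsto -2\ln s$ (return time for the Gauss map as a suspension cross-section), whose $\nuga$-integral is $\int_0^1 \frac{-2\ln s}{(1+s)\ln 2}\,ds = \frac{\ln 2}{\zeta(2)}$ after the standard evaluation (this is the known formula $\sum 1/n^2 = \zeta(2)$ entering through the Gauss–Kuzmin–Lévy constants); once $\nu_{p_q/q}\wstar\nuga$ is established for $p_q\in W_q$, part (1) follows from the same equidistribution applied to the (unbounded, so needing a truncation argument) roof function.

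I would carry out the steps in this order: (i) establish the dictionary between c.f.e. of $p/q$ and a finite orbit segment in the adelic space, including the precise relation $\len(p/q) \leftrightarrow$ orbit length and $\nu_{p/q}\leftrightarrow$ empirical orbit measure; (ii) quote the paper's equidistribution theorem for the push-forwards of the relevant locally finite algebraic measures, phrased as: the normalized empirical measure of the full $q$-packet converges to Haar, with enough uniformity to run a variance bound; (iii) deduce convergence-in-mean of $\nu_{p/q}$ to $\nuga$ and a variance estimate, giving density-one sets $W_q^{(k)}$ for each test function $f_k$ in a countable dense set; (iv) take a diagonal $W_q = \bigcap_{k\le k(q)} W_q^{(k)}$ with $k(q)\to\infty$ slowly, so $|W_q|/\varphi(q)\to 1$ and $\nu_{p_q/q}\wstar\nuga$ for $p_q\in W_q$, proving (2); (v) handle the roof function $-2\ln s$ by truncating it at level $R$, applying (iii)–(iv) to the bounded truncation, and controlling the tail (the contribution of large partial quotients) by a separate counting estimate — this is needed because $\nuga$ has only logarithmic decay near $0$ and the roof function is unbounded — thereby proving (1) after possibly shrinking $W_q$.

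The main obstacle I expect is step (v), the tail control for the unbounded roof function: weak-$*$ convergence $\nu_{p_q/q}\wstar\nuga$ does \emph{not} by itself control $\int -2\ln s\,d\nu_{p_q/q}$ since a single very large partial quotient contributes a large value while having negligible weight in $\nu_{p/q}$. One must show that the total contribution of ``large'' partial quotients to $\len(p/q)$ is, for a density-one set of $p$, a negligible fraction of $\ln q$ — equivalently, an almost-sure (in $p$) upper bound on $\frac{1}{\ln q}\sum_{i:\,a_i(p/q)>R}\ln a_i(p/q)$ that is small for $R$ large. This requires either a direct counting argument for rationals whose c.f.e. has a large partial quotient (a quantitative input of the same flavor as the Zaremba-type estimate mentioned in the abstract) or an effective/quantitative refinement of the equidistribution statement that sees the cusp excursions; combined with the fact that $\int_0^1 -2\ln s\,d\nuga(s)$ converges (so the truncated integrals converge to the right constant $\frac{\ln 2}{\zeta(2)}$), this closes the argument, but it is the technically delicate heart of the proof.
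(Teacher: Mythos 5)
Your overall skeleton --- a cross-section dictionary between the Gauss map and the geodesic flow, fed by the main equidistribution theorem --- is the right one, but two of your key steps would fail as written, and in both cases the paper circumvents the difficulty rather than confronting it.

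First, the density-one upgrade. You propose a second-moment/variance argument and correctly note that it needs ``equidistribution with a rate or equidistribution of pairs / a mixing-type input.'' Neither is available: Theorem~\ref{thm:main} is purely qualitative (it comes from a maximal-entropy argument), and no pair-correlation or effective statement is proved anywhere in the paper. The upgrade is instead done softly, and crucially it is done in the space of lattices rather than on $[0,1]$: by Theorem~\ref{thm:extreme_ergodic}, if a positive-proportion subset $V_q\subseteq\inv{q}$ had orbit measures staying $\ge 1/n$ away from $\mu_{Haar}$ against a fixed test function, one writes the full average as a convex combination $\mu_{Haar}=\alpha_0\mu+(1-\alpha_0)\mu'$ of $A$-invariant probability measures with $\alpha_0>0$; ergodicity makes $\mu_{Haar}$ an extreme point of the $A$-invariant probability measures, forcing $\mu=\mu_{Haar}$, a contradiction. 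A diagonal argument over a countable dense family of test functions then produces the sets $W_q$ (Corollary~\ref{cor:almost_all_converge}). This is exactly why the paper remarks that the upgrade ``seems not to be available when the discussion stays in the classical realm of the Gauss map'': it must be run at the level of the $A$-invariant measures $\delta_{p/q}^{[0,2\ln q]}$, where ergodicity of the limit is usable --- not at the level of the measures $\nu_{p/q}$, where no comparable rigidity exists.

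Second, the length statement. You plan to obtain part (1) by integrating the roof function $s\mapsto-2\ln s$ against $\nu_{p_q/q}$, and you flag the tail contribution of large partial quotients as the ``technically delicate heart.'' The paper never integrates the roof function and needs no tail estimate. It works with $\tilde{\nu}_{p/q}=\frac{1}{2\ln q}\sum_{i=0}^{\len(p/q)-1}\delta_{T^i(p/q)}$, normalized by $2\ln q$ rather than by $\len(p/q)$, whose \emph{total mass} is exactly $\len(p/q)/(2\ln q)$. By Lemma~\ref{lem:pushing_measures} that total mass equals, up to an $O(1/\ln q)$ error, the integral against $\delta_{p/q}^{[0,2\ln q]}$ of the indicator of a thickened cross-section --- a bounded, compactly supported function with $\mu_{Haar}$-null boundary --- using the geometric fact (Lemma~\ref{lem:first_meet}) that the orbit's first and last visits to $\pi(C)$ occur within $[0,2\ln q]$. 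Weak-$*$ convergence of the orbit measures then gives $\tilde{\nu}_{p_q/q}\wstar 2\ln(2)\kappa\,\nuga$ (Lemma~\ref{lem:haar_to_gauss}); evaluating at the constant function $1$ yields part (1), and renormalizing yields part (2). The only remaining work is the computation $\kappa=1/(2\zeta(2))$ via the return-time integral. So the unbounded-observable truncation and the counting of rationals with a large partial quotient, which you identify as the hardest step of your route, are simply not needed.
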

\begin{remark}
Let $\mb{w}$ be a finite word on $\bN$.
It is well known, and indeed follows 
from the ergodicity of $T$ with respect to $\nuga$, that for Lebesgue almost any $x$ the asymptotic
frequency  of appearances of $\mb{w}$ in the c.f.e of $x$ equals 
\begin{equation}\label{eq:1357}
\nuga(\mb{w})\defi \nuga\pa{\set{y\in[0,1]:\textrm{ the c.f.e of \ensuremath{y} starts with \ensuremath{\mb{w}}}}}.
\end{equation}
Let us denote by $\nu_{p/q}(\mb{w})$ the \emph{frequency}  of the 
word $\mb{w}$ in the c.f.e of $p/q$; that is, the number of appearances of $\mb{w}$ in the 
c.f.e of $p/q$ divided by $\len(p/q)$. 
Then, it is easy to see that since the endpoints of the interval given by the set in~\eqref{eq:1357} have zero $\nuga$ measure, 
then the weak* convergence in part \eqref{p:2} of Theorem~\ref{thm:main_app} implies that for any finite word $\mb{w}$ over $\bN$ we have that $\nu_{p_q/q}(\mb{w})\to \nuga(\mb{w})$.

\end{remark}

An obvious corollary of Theorem~\ref{thm:main_app} (together with the fact that $\len(p/q)\leq 2\log_2(q)$) is obtained by averaging
over $p\in\inv q$ as follows.
\begin{corollary}
\label{cor:app} 
\begin{enumerate}
\item \label{cor:app1} Let $\bar{\nu}_{q}=\vphi(q)^{-1}\sum_{p\in\inv q}\nu_{p/q}$.
Then $\bar{\nu}_{q}\wstar\nuga$. 
\item \label{cor:app2} Let $\overline{\len}(q)=\vphi(q)^{-1}\sum_{p\in\inv q}\len(p/q)$. 
Then $\frac{\overline{\len}(q)}{2\ln q}\to\frac{\ln2}{\zeta(2)}$. 
\end{enumerate}
\end{corollary}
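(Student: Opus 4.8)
The plan is to deduce both parts of Corollary~\ref{cor:app} directly from Theorem~\ref{thm:main_app} by an averaging argument, exploiting that $[0,1]$ is compact and that, after normalization, $\len(p/q)\le 2\log_2 q$ stays bounded. First I would record a uniformity observation: the assertion of Theorem~\ref{thm:main_app} that \emph{for any} choice of $p_q\in W_q$ the two convergences hold is equivalent, by a routine subsequence argument, to the \emph{uniform} statements --- for every $f\in C([0,1])$ and every $\veps>0$ there is $Q$ so that for all $q\ge Q$ and all $p\in W_q$ one has $\bigav{\int f\,d\nu_{p/q}-\int f\,d\nuga}<\veps$, and likewise $\bigav{\frac{\len(p/q)}{2\ln q}-\frac{\ln2}{\zeta(2)}}<\veps$. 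Indeed, if uniformity failed one could select a subsequence of $q$'s and for each a ``bad'' $p\in W_q$, thereby producing a choice $p_q\in W_q$ violating Theorem~\ref{thm:main_app}.

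For part \eqref{cor:app1}, fix $f\in C([0,1])$ and split
\[
\int f\,d\bar{\nu}_q=\frac{1}{\vphi(q)}\sum_{p\in W_q}\int f\,d\nu_{p/q}+\frac{1}{\vphi(q)}\sum_{p\in\inv q\setminus W_q}\int f\,d\nu_{p/q}.
\]
The second sum is bounded in absolute value by $\frac{\av{\inv q\setminus W_q}}{\vphi(q)}\,\norm{f}_\infty$, which tends to $0$ since $\av{W_q}/\vphi(q)\to1$ (here we use that each $\nu_{p/q}$ is a probability measure on the compact space $[0,1]$, so no mass escapes). In the first sum, by the uniform version each term is within $\veps$ of $\int f\,d\nuga$ once $q$ is large, so the first sum equals $\frac{\av{W_q}}{\vphi(q)}\bigl(\int f\,d\nuga+O(\veps)\bigr)$, which converges to $\int f\,d\nuga$ as $q\to\infty$ and then $\veps\to0$. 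Hence $\int f\,d\bar{\nu}_q\to\int f\,d\nuga$ for every $f\in C([0,1])$, i.e.\ $\bar{\nu}_q\wstar\nuga$.

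Part \eqref{cor:app2} is proved in exactly the same way, with $\int f\,d\nu_{p/q}$ replaced by $\frac{\len(p/q)}{2\ln q}$ and $\int f\,d\nuga$ by $\frac{\ln2}{\zeta(2)}$: the bound $\len(p/q)\le 2\log_2 q$ gives $0\le\frac{\len(p/q)}{2\ln q}\le\frac1{\ln2}$, so the tail sum over $\inv q\setminus W_q$ is $O\bigl(\av{\inv q\setminus W_q}/\vphi(q)\bigr)\to0$, while the sum over $W_q$ converges to $\frac{\ln2}{\zeta(2)}$ by the uniform version of part (1) of Theorem~\ref{thm:main_app}. Adding the two contributions yields $\frac{\overline{\len}(q)}{2\ln q}\to\frac{\ln2}{\zeta(2)}$.

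I do not anticipate a genuine obstacle here; the only points requiring a moment's care are the passage from the ``for any choice of $p_q$'' formulation to a uniform-over-$W_q$ estimate, and the remark that the exceptional set $\inv q\setminus W_q$ contributes negligibly precisely because the relevant quantities (test-function integrals against probability measures on $[0,1]$, and normalized lengths) are uniformly bounded while this set has vanishing density in $\inv q$.
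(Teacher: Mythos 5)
Your argument is correct and is exactly the averaging the paper has in mind: the paper presents Corollary~\ref{cor:app} as an ``obvious corollary'' of Theorem~\ref{thm:main_app} together with the bound $\len(p/q)\le 2\log_2 q$, obtained by averaging over $p\in\inv q$, and your splitting into $W_q$ and its (density-zero) complement, with the uniform boundedness of the normalized quantities controlling the exceptional part, is precisely that deduction made explicit. The passage from the ``for any choice of $p_q\in W_q$'' formulation to the uniform-over-$W_q$ estimate via the subsequence/contradiction argument is also valid, so there is nothing to add.
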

This corollary was first obtain by Heilbronn 
\cite{heilbronn_average_1969}
who also computed an error term, which was later improved by Ustinov
\cite{ustinov_number_2009}. The upgrade from Corollary \ref{cor:app}
to Theorem \ref{thm:main_app} is almost automatic when the discussion
is lifted to the space of lattices as can be seen in \S\ref{subsec:upgrade}.
It seems not to be available when the discussion stays in the classical
realm of the Gauss map. Running over all $1\leq p\leq q$ and not
just $\left(p,q\right)=1$, Bykovskii \cite{bykovskii_estimate_2007}
showed that $\frac{1}{q}\sum_{1}^{q}\left(\len\left(\frac{p}{q}\right)-\frac{2\ln\left(2\right)}{\zeta\left(2\right)}\ln q \right)^{2}\ll\ln q $.

We note also that averaged versions of Theorem~\ref{thm:main_app} with an
extra average over $q$ were obtained by Dixon \cite{dixon_number_1970}
who showed that for any $\varepsilon>0$ there exists $c>0$ such
that 
\[
\#\set{ 
(p,q):
\begin{array}{ll}
{\tiny 1\leq p\leq q\leq x, }\\
{\tiny 
\left|\frac{\len\left(p/q\right)}{2\ln\left(2\right)}-\frac{\ln q }{\zeta(2)}\right|<\frac{1}{2}\left(\ln q \right)^{-\frac{1}{2}+\varepsilon}
}
\end{array}
} 
\leq x^{2}\exp\left(-c\ln^{\varepsilon/2}\left(x\right)\right),
\]
which was later improved by Hensley in \cite{hensley_number_1994}.
See also \cite{adler_construction_1981} and \cite{vandehey_new_2016}
for construction of normal numbers with respect to c.f.e using rational
numbers.
\subsection{Contrast to Zaremba's conjecture}

Recall that Zaremba's conjecture \cite{zaremba1972methode} asserts that there exists $M>0$ such that for all $q$
there exists $p\in \inv{q}$ such that all the coefficients in the c.f.e of $p/q$ are bounded by $M$. 
Theorem~\ref{thm:main_app} may be interpreted as saying that Zaremba is looking for a needle in a haystack. In fact, while Theorem~\ref{thm:main_app}
asserts that the set of $p/q$ which are good for Zaremba is of size $o(q)$, the following strengthening says that it is actually
$o(q^{1-\eps})$.
\begin{theorem}\label{thm:Zaremba}
For each $M$ there exists $\eps>0$ such that 
$$\#\set{p\in\inv{q} : \textrm{{\tiny the coefficients of the c.f.e of $p/q$ are bounded by $M$} }}  = o(q^{1-\eps}).$$
\end{theorem}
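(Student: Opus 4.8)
The key observation is that continued fraction expansions with all coefficients bounded by $M$ correspond, in the space of lattices, to points whose forward $A$-orbit (under the diagonal geodesic flow) stays in a fixed compact set — the ``bounded'' part of the continued fraction algorithm is exactly the part that keeps the cutting sequence of the geodesic away from the cusp. So the plan is to translate the counting problem into a problem about lattices whose associated geodesic stays in a compact set $K_M$ for all of (forward) time up to the escape, and then exploit the exponential smallness of the set of such lattices. The quantitative input is that the set of unimodular lattices whose geodesic orbit stays in a compact set for a long time has exponentially small Haar measure, with rate governed by the Hausdorff dimension of the corresponding bounded-orbit set being strictly less than full; equivalently, the transfer operator for the Gauss map restricted to digits $\le M$ has spectral radius $\lambda_M < 1$ in the appropriate functional space.

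Concretely, I would proceed as follows. First, recall the standard correspondence (as used in proving Theorem~\ref{thm:main_app}) between $p/q \in \inv q$ and a point $x_{p/q}$ in the space of lattices, under which $\len(p/q)$ is comparable to the time the geodesic through $x_{p/q}$ takes to traverse the relevant horocyclic piece, and under which the coefficients of the c.f.e being bounded by $M$ forces the geodesic to remain in the compact set $K_M \subset \SL_2(\ZZ)\backslash\SL_2(\RR)$ (the set of lattices all of whose nonzero vectors have a coordinate ratio compatible with digits $\le M$) throughout that time. Second, since $\len(p/q) \ge c\ln q$ except for a negligible set (which we may absorb, as those $p/q$ number $o(q^{1-\eps})$ already by the upper bound $\len \le 2\log_2 q$ combined with a crude count of short expansions), the surviving $p/q$ give geodesic segments of length $\ge c\ln q$ trapped in $K_M$. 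Third, the number of such $p/q$ is bounded by counting lattice points lying on a long trapped orbit: partition the relevant horocycle into pieces, and on each piece use an effective equidistribution / mixing estimate together with the exponential decay $\mathrm{Haar}\{x : a_t x \in K_M \text{ for } 0 \le t \le T\} \ll e^{-\delta_M T}$, where $\delta_M = -\ln \lambda_M > 0$. Taking $T \asymp c\ln q$ yields a bound $\ll q \cdot q^{-\delta_M c} = o(q^{1-\eps})$ with $\eps = \delta_M c / 2$, say.

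The main obstacle — and the step requiring the most care — is the third one: converting ``measure of the trapped set is exponentially small'' into an actual \emph{count} of rationals $p/q$ whose associated lattice point sits on a trapped orbit. The rationals $p/q$ with fixed denominator $q$ form a discrete set of $\varphi(q)$ points on a single closed horocycle, and one must show these points are sufficiently equidistributed at the scale $1/q$ (or at a scale that is a small negative power of $q$) so that the proportion of them landing in the exponentially thin trapped set is controlled by its measure, up to acceptable error. This is where one needs an effective equidistribution statement for long pieces of horocycles (or the pertinent adelic orbits) with a polynomial error term in $q$ — precisely the kind of effective result the paper develops for Theorem~\ref{thm:main_app}. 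A secondary technical point is that ``the geodesic stays in $K_M$'' is only an approximate characterization of ``digits $\le M$'': one must interleave the return-time structure of the geodesic flow with the actual digits, but this is the same dictionary already used in the main theorem, so it can be cited rather than redone. Once the effective equidistribution on scale $q^{-\eta}$ (for a fixed small $\eta>0$) is in hand, choosing $\eps$ small relative to both $\eta$ and $\delta_M c$ closes the argument.
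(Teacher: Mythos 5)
Your first step --- showing that bounded partial quotients force the truncated geodesic $x_0u_{p/q}a(t)$, $t\in[0,2\ln q]$, to stay in a fixed compact set $X^{\le M(K)}$ --- matches the paper's proof, which establishes exactly this via the standard convergent inequalities. But your third step contains a genuine gap, and it is precisely the step you yourself flag as the main obstacle. You propose to convert the bound $\mu_{Haar}\{x: a(t)x\in K_M \text{ for } 0\le t\le T\}\ll e^{-\delta_M T}$ into a count of rationals by an effective equidistribution statement for the $\varphi(q)$ points on the closed horocycle ``with a polynomial error term in $q$,'' and you assert this is ``precisely the kind of effective result the paper develops for Theorem~\ref{thm:main_app}.'' It is not: the paper's methods are entirely soft (entropy rigidity, weak* limits, no rates anywhere). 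Moreover, even granting an effective equidistribution of the rational points against \emph{smooth} test functions with error $q^{-\eta}$, the trapped set is an exponentially thin fractal-like set, and testing its indicator requires either that the error term beat the measure $q^{-\delta_M c}$ of the set after smoothing (which destroys the gain, since smoothing at scale $q^{-\eta}$ inflates the set) or a regularity/boundary control for the trapped set that you have not supplied. Making this work is essentially the transfer-operator route of Hensley and is far from a routine citation.

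The paper's actual argument sidesteps all of this with a soft contradiction that you are very close to but do not articulate: since $\delta_{\Lambda_{q,K}}^{[0,2\ln q]}$ is supported in the fixed compact set $X^{\le 2(K+1)^2}$, there is trivially no escape of mass, so condition~(ii) of Theorem~\ref{thm:mainug} holds for free. If in addition $\limsup_q \ln|\Lambda_{q,K}|/\ln q$ were equal to $1$, then along a suitable subsequence condition~(i) would also hold and Theorem~\ref{thm:mainug} would force $\delta_{\Lambda_{q,K}}^{[0,2\ln q]}\wstar\mu_{Haar}$ --- impossible, because any weak* limit is supported in $X^{\le 2(K+1)^2}$ while $\mu_{Haar}$ is not. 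Hence $\limsup_q \ln|\Lambda_{q,K}|/\ln q<1$, which is exactly the claimed $o(q^{1-\eps})$. No measure estimate for the trapped set and no effective input of any kind is needed; the entire quantitative content is hidden in hypothesis~(i) of the already-proved main theorem. I would encourage you to replace your third step with this contrapositive use of Theorem~\ref{thm:mainug}; as written, your proposal does not close.
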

\subsection{\label{subsec:Divergent-geodesics}Divergent geodesics}

Let $G=\PGL_{2}(\bR)$, $\Ga=\PGL_{2}(\bZ)$ and $X_{2}=\Gamma\backslash G$.
The space $X_{2}$ is naturally identified with the space of homothety
classes of lattices in the plane where the coset $\Ga g$ corresponds
to the (homothety class of the) lattice $\bZ^{2}g$. We shall refer
to $\bZ^{2}$ as the standard lattice and denote its class in $X_{2}$
by $x_{0}$. We let $G$ and its subgroups act on $X_{2}$ from the
right and usually abuse notation and write elements of $G$ as matrices.
Consider the subgroups of $G$, 
\begin{equation}\label{eq:1518}
A=\set{a(t)=\left(\begin{smallmatrix}e^{-t/2} & 0\\
0 & e^{t/2}
\end{smallmatrix}\right):t\in\bR};\;U=\set{u_{s}=\left(\begin{smallmatrix}1 & s\\
0 & 1
\end{smallmatrix}\right):t\in\bR}
\end{equation}

Theorem~\ref{thm:main_app} is a consequence of a certain equidistribution
theorem regarding collections of divergent orbits of the diagonal
group which we now wish to discuss. It is not hard to see that if
$s=p/q$ is a rational in reduced form then the $A$-orbit $x_{0}u_{s}A$
is divergent; that is, the map $t\mapsto x_{0}u_{s}a(t)$ is a proper
embedding of $\bR$ in $X_{2}$. In fact, for $t<0$ this lattice
contains the vector $e^{t/2}(0,1)$ which is of length $e^{t/2}\to0$
as $t\to-\infty$ and for $t>0$ the lattice contains the vector $\left(q,-p\right)u_{s}a(t)=\left(qe^{-t/2},0\right)$
which is of length $\le1$ when $t\ge2\ln q$ and goes to zero as
$t\to\infty$. So the interesting life-span of the orbit $x_{0}u_{s}A$
is the interval $\set{x_{0}u_{s}a(t):t\in\left[0,2\ln q\right]}$.
We therefore define for $p\in\inv{q}$,
\begin{equation}
\delta_{x_0u_{p/q}}^{\left[0,2\ln q \right]}=\frac{1}{2\ln q }\int_{0}^{2\ln q }\delta_{x_{0}u_{p/q}a(t)}\dt\label{eq:mupq}
\end{equation}
(which means that for a bounded continuous function on $X_{2}$ we
have \\$\int_{X_{2}}fd\delta_{x_0u_{p/q}}^{\left[0,2\ln q \right]}:=\frac{1}{2\ln q}\int_{0}^{2\ln q}f(x_{0}u_{p/q}a(t))\dt$).
Finally, let $\mu_{Haar}$ denote the unique $G$-invariant probability
measure on $X_2$. The tight relation between the $A$-action on $X_2$ and continued
fractions is well understood. Indeed, we deduce Theorem~\ref{thm:main_app} from results in the space $X_2$
which we now describe.
\begin{theorem}
\label{thm:main} As $q\to\infty$ we have that 
\[
\frac{1}{\vphi(q)}\sum_{p\in\inv q}\delta_{x_0u_{p/q}}^{\left[0,2\ln q \right]}\wstar\mu_{Haar}.
\]
\end{theorem}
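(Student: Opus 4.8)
The plan is to pass from the sum of divergent $A$-orbit segments on $X_2$ to an adelic statement, where the relevant orbit becomes a single $A$-orbit of the diagonal group, and then invoke equidistribution for such orbits. Concretely, one lifts the picture to the $S$-arithmetic (or adelic) space of lattices: the finitely many cusps that the divergent geodesic $x_0 u_{p/q} a(t)$, $t\in[0,2\ln q]$, visits at the two ends are ``repaired'' by adjoining the places dividing $q$. After this lift, the union over $p\in\inv q$ of the real segments $\{x_0 u_{p/q} a(t):t\in[0,2\ln q]\}$ assembles (up to controlled error) into a full periodic orbit of the adelic diagonal torus $A_\bA$ acting on a fixed base point, and the normalized sum $\frac1{\vphi(q)}\sum_p \delta^{[0,2\ln q]}_{x_0 u_{p/q}}$ becomes (close to) the unique $A_\bA$-invariant probability measure $\madorb$ on that orbit. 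This is the bookkeeping referred to in \S\ref{subsec:upgrade}: the arithmetic input is that for fixed $q$ the collection $\{u_{p/q}: p\in\inv q\}$ is a set of coset representatives that, together with the compact factor at the finite places, sweeps out the adelic orbit; the real-dynamical input is that the lifespan $[0,2\ln q]$ is precisely the segment on which the orbit is ``in the body'' of the adelic space, so truncating there costs a vanishing proportion of mass as $q\to\infty$.

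Next, I would establish equidistribution of these periodic $A_\bA$-orbits as $q\to\infty$, i.e.\ $\madorb \wstar \mu_{Haar}$. The natural route is via measure rigidity / linearization in the style of the work on diagonal orbits and Duke-type theorems: any weak* limit $\mu_\infty$ of the $\madorb$ is an $A$-invariant (in fact $A_\bA$-invariant) sub-probability measure on the adelic space, and one must show (a) no escape of mass, and (b) the limit is Haar. Point (b) would follow from an entropy argument together with the classification of $A$-invariant measures, or alternatively from a harmonic-analytic/subconvexity input bounding the relevant period integrals (Weyl sums); point (a), the non-escape of mass, is where the height functions on the adelic space of lattices and a quantitative non-divergence estimate (Margulis functions, $\text{á la}$ Eskin--Margulis--Mozes / Benoist--Quint, adapted to divergent-orbit families) must be deployed, using that the ``bad'' excursions correspond to the two explicitly identified short vectors $e^{t/2}(0,1)$ and $(qe^{-t/2},0)$ whose effect is confined to the endpoints of $[0,2\ln q]$.

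From $\madorb \wstar \mu_{Haar}$ together with the first paragraph's identification, Theorem~\ref{thm:main} follows: the difference between $\frac1{\vphi(q)}\sum_p \delta^{[0,2\ln q]}_{x_0 u_{p/q}}$ and $\madorb$ is an error term supported near the cusp and of mass $o(1)$, so the two sequences have the same weak* limit, namely $\mu_{Haar}$. One should take some care that the test functions are only bounded and continuous on $X_2$ (not compactly supported), so the non-escape of mass in (a) is genuinely needed to conclude weak* convergence to a probability measure rather than merely vague convergence.

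I expect the main obstacle to be step (a)--(b) of the second paragraph: proving that the limit is all of $\mu_{Haar}$ with no loss of mass. The algebraic reformulation of the first paragraph is essentially bookkeeping, but controlling a family of \emph{divergent} orbits — where each individual orbit spends unbounded time in the cusp, and only the lifted/truncated version is recurrent — requires a genuinely uniform (in $q$) quantitative estimate, and simultaneously one must rule out the limit being a lower-dimensional algebraic measure (e.g.\ concentrated on a sub-orbit), which is exactly where either the subconvexity/Weyl-sum bound or a positive-entropy-plus-rigidity argument has to do real work.
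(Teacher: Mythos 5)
Your outline correctly isolates the two issues (non-escape of mass and identifying the limit as Haar), but both of the mechanisms you propose for resolving them have genuine problems, and the adelic reduction you lead with points in the wrong direction. First, the adelic object attached to this family is \emph{not} a packet of compact torus orbits: since $p/q$ is rational, the relevant torus is $\bQ$-split, the orbit $\tilde{x}_0A_\bA$ is closed but noncompact, and its invariant measure is an infinite locally finite measure. The measures in question are translates $g_i\mu_{\tilde{x}_0A_\bA}$ of this single infinite measure, so Duke-type equidistribution, subconvexity bounds on period integrals, and Weyl-sum estimates --- all of which are built for compact packets --- do not apply as stated; one would have to regularize divergent periods, which is a substantially different problem you do not address. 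In the paper the logical order is reversed: the real-space statement is proved first, directly on $X_2$, and the adelic statement (Theorem~\ref{thm:main3}) is \emph{deduced} from it via a lifting/uniqueness argument (Corollary~\ref{cor:real_to_adele}), not used as an input.

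Second, and more seriously, your step (b) has no working engine. In $\PGL_2$ the diagonal group is one-dimensional, so there is no Lindenstrauss-style classification of positive-entropy $A$-invariant measures to invoke; an $A$-invariant limit could a priori be any of a huge family of measures. What actually closes the argument in the paper is that $\mu_{Haar}$ is the \emph{unique measure of maximal entropy} for $a(1)$ (Theorem~\ref{thm:maximalentropy}), so one must prove the limit has entropy exactly $1$. That lower bound is the real content: it comes from the separation statement (Lemma~\ref{lem:separation}) that distinct points $x_0u_{p/q}$ lie in distinct Bowen balls $zB_{\eta,\flr{\ln q}}$, combined with the good-partition construction (Lemma~\ref{lem:bowen_control}) and the arithmetic fact $\ln\varphi(q)/\ln q\to1$, yielding $\tfrac1{\flr{\ln q}}H_{\delta_{\Lambda_q}}(\cP_0^{\flr{\ln q}})\to1$. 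Your proposal never explains how the entropy of the limit is bounded below. Likewise, for non-escape of mass the paper does not need Margulis functions: it is an elementary counting argument (Lemmas~\ref{lem:equi on the circle} and~\ref{lem:nem}) bounding, for each fixed $t\le\ln q-2\omega(q)$, the number of $p\in\inv{q}$ with $x_0u_{p/q}a(t)$ high in the cusp by $\tfrac{4}{M^2}\varphi(q)$, using the near-equidistribution of $\inv{q}$ in $\bZ/\tilde q\bZ$. A general quantitative non-divergence estimate would still have to be made uniform over this specific arithmetic family, which is exactly the work the elementary argument does. As written, your proposal is a plan whose two load-bearing steps each rely on machinery that either does not apply (subconvexity for a split, noncompact orbit) or does not exist (a classification of $A$-invariant measures in rank one).
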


\begin{corollary}\label{cor:almost_all_converge}
There exist sets $W_q\subseteq \inv{q}$ with $\displaystyle{\lim_{q\to \infty}} \frac{|W_q|}{\varphi(q)}=1$, such that for any choice  of $p_q \in W_q$ we have that $\delta_{x_0 u_{p/q}}^{[0,2\ln(q)]} \wstar \mu_{Haar}$.
\end{corollary}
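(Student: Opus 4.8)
The plan is to derive the corollary from Theorem~\ref{thm:main} by a soft argument resting on three facts: (i) by Theorem~\ref{thm:main} the measures $\delta_{x_0u_{p/q}}^{[0,2\ln q]}$ converge to $\mu_{Haar}$ \emph{on average} over $p$; (ii) because the defining interval $[0,2\ln q]$ has length tending to infinity, every weak-$*$ limit of a single such measure is $A$-invariant; and (iii) $\mu_{Haar}$ is $A$-ergodic, hence an extreme point of the set of $A$-invariant probability measures, and a probability measure on a compact convex set whose barycenter is an extreme point must be the point mass there. Together these force the ``average of the limits'' to be a trivial average, which is exactly the density-one statement.

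Concretely, let $X_2^{+}=X_2\cup\{\infty\}$ be the one-point compactification, let $\cP(X_2^{+})$ be its space of Borel probability measures with the weak-$*$ topology (compact and metrizable; fix a metric $\rho$ inducing it), and recall that for probability measures $\mu_n,\mu$ on $X_2$ with $\mu(\{\infty\})=0$, one has $\mu_n\wstar\mu$ on $X_2$ if and only if $\mu_n\to\mu$ inside $\cP(X_2^{+})$. Write $\mu_{p/q}:=\delta_{x_0u_{p/q}}^{[0,2\ln q]}\in\cP(X_2^{+})$ and
\[
\Lambda_q:=\frac{1}{\vphi(q)}\sum_{p\in\inv q}\delta_{\mu_{p/q}}\in\cP(\cP(X_2^{+})).
\]
I claim $\Lambda_q\to\delta_{\mu_{Haar}}$. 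Granting this, for every $\eps>0$ the set $\{\mu:\rho(\mu,\mu_{Haar})\ge\eps\}$ is closed, so its $\Lambda_q$-mass tends to $0$, i.e.\ $\#\{p\in\inv q:\rho(\mu_{p/q},\mu_{Haar})\ge\eps\}=o(\vphi(q))$; a routine diagonalization over $\eps=1/k$ produces sets $W_q$ with $|W_q|/\vphi(q)\to1$ such that $\rho(\mu_{p_q/q},\mu_{Haar})\to0$ for any choice $p_q\in W_q$, and by the equivalence just recalled (using $\mu_{Haar}(\{\infty\})=0$) this is precisely $\delta_{x_0u_{p_q/q}}^{[0,2\ln q]}\wstar\mu_{Haar}$.

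It remains to prove $\Lambda_q\to\delta_{\mu_{Haar}}$; by compactness of $\cP(\cP(X_2^{+}))$ it suffices to show that any subsequential limit $\Lambda_\infty$ equals $\delta_{\mu_{Haar}}$. First, the barycenter of $\Lambda_q$ is $\vphi(q)^{-1}\sum_{p}\mu_{p/q}$, which by Theorem~\ref{thm:main} converges to $\mu_{Haar}$ in $\cP(X_2^{+})$; since the barycenter map is weak-$*$ continuous, $\Lambda_\infty$ has barycenter $\mu_{Haar}$. Second, $\Lambda_\infty$ is supported on the compact convex set $\cM^{A}$ of $A$-invariant probability measures on $X_2^{+}$: any $\mu$ in the support of $\Lambda_\infty$ is a limit $\mu=\lim_j\mu_{p_j/q_j}$ with $q_j\to\infty$, and for $f\in C(X_2^{+})$ and $\tau\in\bR$ the integrals $\int f\,d\mu_{p/q}$ and $\int f\circ a(\tau)\,d\mu_{p/q}$ differ only by the contributions of the intervals $[0,\tau]$ and $[2\ln q,2\ln q+\tau]$, hence by at most $\|f\|_\infty\,|\tau|/\ln q\to 0$, so $\mu$ is $a(\tau)$-invariant for every $\tau$. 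Third, $\mu_{Haar}$ is an extreme point of $\cM^{A}$, since the geodesic flow on $X_2$ is ergodic with respect to $\mu_{Haar}$ (classical) and the extreme points of the simplex of invariant probability measures are exactly the ergodic ones. A probability measure on a compact convex set whose barycenter is an extreme point is the Dirac mass at that point; hence $\Lambda_\infty=\delta_{\mu_{Haar}}$, as required.

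Every step here is soft once Theorem~\ref{thm:main} is in hand, so there is no genuine obstacle within the corollary itself; the only points demanding a little care are the verification that individual limit measures are $A$-invariant (which is exactly where the growing length of the interval $[0,2\ln q]$ is used) and the observation that passing to the compactification $X_2^{+}$ lets the same argument simultaneously rule out escape of mass. Note that, since nothing quantitative beyond Theorem~\ref{thm:main} is used, this argument gives no rate of convergence for $|W_q|/\vphi(q)\to1$.
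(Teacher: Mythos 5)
Your argument is correct, and it reaches the corollary by a genuinely different packaging of the same two inputs the paper uses, namely Theorem~\ref{thm:main} and the ergodicity (hence extremality) of $\mu_{Haar}$ among $A$-invariant probability measures. The paper first proves the intermediate Theorem~\ref{thm:extreme_ergodic}: for any $W_q\subseteq\inv{q}$ with $|W_q|\ge\alpha\vphi(q)$ one has $\delta_{W_q}^{[0,2\ln q]}\wstar\mu_{Haar}$, obtained by splitting the full average into the $W_q$-part and its complement, passing to a subsequential limit, and writing $\mu_{Haar}=\alpha_0\mu+(1-\alpha_0)\mu'$; the corollary is then deduced by a test-function argument (sets $W_{q,n}$ built from a countable dense family of $C_c(X_2)$, a contradiction via Theorem~\ref{thm:extreme_ergodic} if their density did not tend to $1$, and a diagonalization). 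You instead form the empirical measure $\Lambda_q\in\cP(\cP(X_2^{+}))$, check that subsequential limits are supported on $A$-invariant measures and have barycenter $\mu_{Haar}$, and invoke Bauer's characterization of extreme points (a probability measure on a compact convex set whose barycenter is extreme is the Dirac mass there; note this rests on Milman's theorem and not on extremality alone, but it is classical and correct). The paper's two-point convex decomposition is exactly the special case of this barycenter argument, so the substance is shared; what your version buys is that escape of mass and weak-$*$ convergence are handled uniformly via the one-point compactification, and the density-one conclusion falls out of the portmanteau theorem rather than a bespoke contradiction, while the paper's version is more elementary in that it avoids measures on spaces of measures. Both proofs, as you note, are soft and yield no rate for $|W_q|/\vphi(q)\to1$.
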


As mentioned before, although it seems stronger, Corollary \ref{cor:almost_all_converge} follows from Theorem~\ref{thm:main} using only the fact that $\mu_{Haar}$ is $A$-ergodic. See \S\ref{subsec:upgrade} for details.

We will prove Theorem~\ref{thm:main} as a consequence 
of the following more general equidistribution result. We say that a sequence of 
probability measures $\eta_n$ does not exhibit escape of mass if any weak* accumulation point of it is a probability measure.
\begin{theorem}\label{thm:mainug}
Let $\Lambda_q\subset (\bZ/q\bZ)^\times$ be subsets such that 
\begin{enumerate}[(i)]
\item\label{ass:01} $\lim \frac{\ln |\Lambda_q|}{\ln q} = 1$, 
\item\label{ass:02} the sequence of
measures $\frac{1}{|\Lambda_q|}\sum_{p\in \Lambda_q} \delta_{x_0u_{p/q}}^{[0,2\ln q]}$ does not exhibit escape of mass. 
\end{enumerate}
Then 
$\frac{1}{|\Lambda_q|}\sum_{p\in \Lambda_q} \delta_{x_0u_{p/q}}^{[0,2\ln q]}\wstar \mu_{Haar}$.
\end{theorem}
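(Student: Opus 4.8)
The plan is to prove Theorem~\ref{thm:mainug} by combining a measure classification input (Linnik-type / entropy-and-equidistribution arguments for the diagonal action) with an arithmetic lower bound on entropy coming from assumption \eqref{ass:01}. Write $\eta_q = \frac{1}{|\Lambda_q|}\sum_{p\in\Lambda_q}\delta_{x_0u_{p/q}}^{[0,2\ln q]}$. By assumption \eqref{ass:02} the family $\{\eta_q\}$ is tight, so it suffices to show that every weak* limit $\mu$ along a subsequence equals $\mu_{Haar}$. The first observation is that each $\delta_{x_0u_{p/q}}^{[0,2\ln q]}$ is, up to a vanishing error at the two ends of the time interval, almost $A$-invariant: for a fixed $r\in\bR$, pushing forward by $a(r)$ only changes the interval $[0,2\ln q]$ to $[r,2\ln q+r]$, and since $2\ln q\to\infty$ the difference in total variation is $O(|r|/\ln q)\to 0$. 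Hence any weak* limit $\mu$ is an $A$-invariant probability measure on $X_2$.

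The heart of the matter is to show that this $A$-invariant $\mu$ is the Haar measure, and the natural tool is the variational principle together with a lower bound on the metric entropy $h_\mu(a(1))$ of the time-one map. The strategy I would adopt is the ``linearization'' approach: relate $\eta_q$ to a counting measure and show that the orbit pieces $x_0u_{p/q}a(t)$, $t\in[0,2\ln q]$, $p\in\Lambda_q$, are spread out at scale comparable to $q$ inside $X_2$ — more precisely, that the number of such pieces that can fit in a Bowen $(n,\veps)$-ball for the flow is subexponential in the right sense, forcing $h_\mu(a(1))\ge \dim = 2$ (the entropy of Haar). Concretely, the key combinatorial step is: if $p,p'\in(\bZ/q\bZ)^\times$ and the orbit segments $x_0u_{p/q}[0,2\ln q]A$ and $x_0u_{p'/q}[0,2\ln q]A$ come within distance $e^{-n}$ of each other for a long time, then $p\equiv p'$ modulo a divisor of $q$ of size $\gtrsim e^{cn}$, so that the number of $p'\in\Lambda_q$ shadowing a given $p$ is at most $q/e^{cn}\cdot(\text{something})$; combined with $|\Lambda_q|=q^{1+o(1)}$ from \eqref{ass:01}, this yields $h_\mu(a(1))\ge 2 - o(1)$, hence $h_\mu(a(1)) = 2$. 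Since $\mu_{Haar}$ is the unique $A$-invariant measure of maximal entropy on $X_2$ (equivalently, the unique one with entropy $2$), we conclude $\mu=\mu_{Haar}$. An alternative, and perhaps cleaner, route is to pass to the adelic cover as the abstract of the paper suggests: realize $\frac{1}{|\Lambda_q|}\sum_{p\in\Lambda_q}\delta_{x_0u_{p/q}}^{[0,2\ln q]}$ as (the archimedean projection of) a piece of a single locally finite $A_\bA$-orbit measure on $\PGL_2(\bQ)\backslash\PGL_2(\bA)$, whose translates equidistribute by the equidistribution theorem for divergent adelic torus orbits; assumption \eqref{ass:01} guarantees that enough of the adelic orbit is being averaged that no mass concentrates on a proper subgroup orbit, and \eqref{ass:02} rules out escape to the cusp.

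The main obstacle is exactly this entropy lower bound / non-concentration step: one must quantify how assumption \eqref{ass:01} — a lower bound on $|\Lambda_q|$ on the logarithmic scale — translates into positive entropy of the limit measure along the diagonal direction. The subtlety is that the orbit segments are long (length $2\ln q$) and the ``initial points'' $x_0u_{p/q}$ are highly correlated through the common denominator $q$; one needs a Diophantine separation statement of the form ``two reduced fractions $p/q$, $p'/q$ whose associated unipotent translates agree to precision $\veps$ after time $t$ must have $|p-p'|\le C q e^{-t}$'' (this is essentially the statement that lattices $\bZ^2 u_{p/q}a(t)$ and $\bZ^2 u_{p'/q}a(t)$ are close iff their short vectors coincide, which ties back to continued fractions), and then feed this separation into the definition of entropy via Bowen balls. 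Controlling the contribution of the endpoints of $[0,2\ln q]$ (where the orbit is in the cusp) is a secondary technical point, handled by \eqref{ass:02} together with a non-divergence estimate of Dani--Margulis type. Once positive entropy bounded below by $2$ is established, the identification with Haar is immediate from the uniqueness of the measure of maximal entropy for the geodesic flow on $X_2$, so the proof concludes quickly after that point.
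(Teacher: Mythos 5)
Your overall architecture --- tightness from (ii), $A$-invariance of any weak* limit, an entropy lower bound via a Diophantine separation of the points $x_0u_{p/q}$ at scale $1/q$, and then uniqueness of the measure of maximal entropy --- is exactly the paper's strategy, and the separation statement you articulate is essentially the paper's Lemma~\ref{lem:separation}. But there is a genuine gap at the quantitative heart of the argument. You propose to run the Bowen-ball counting over the full window $[0,2\ln q]$ and claim that $|\Lam_q|=q^{1+o(1)}$ separated points force maximal entropy. They do not: with $N$ the length of the time window, the counting bound you describe yields $h_\mu \gtrsim \frac{\ln|\Lam_q|}{N}$, and with $N=2\ln q$ this is $\tfrac{1}{2}(1+o(1))$, i.e.\ only \emph{half} the maximal entropy (which, incidentally, is $1$ and not $2$ for the time-one map $a(1)$ in this parametrization). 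There are only $\approx q$ orbit pieces but the window has length $2\ln q$, so the naive count falls short by a factor of $2$ and the maximal-entropy criterion does not apply. The paper's fix is the reduction in \S\ref{ssec:nad}: one splits $[0,2\ln q]$ into $[0,\ln q]$ and $[\ln q,2\ln q]$ and uses the involution $\tau$ (dual lattice/transpose-inverse) together with the identity $x_0u_{p/q}a(2\ln q-t)=\tau(x_0u_{p'/q}a(t))$, where $pp'\equiv-1\pmod q$, to recognize the second half as the $\tau$-image of the first half of the dual family $\Lam_q'$. Each half is then treated over a window of length only $\flr{\ln q}$, for which the separation at scale $1/q$ in the unstable direction gives entropy $\frac{\ln|\Lam_q|}{\ln q}\to 1$, and the two halves are recombined at the end using $\tau$-invariance of $\mu_{Haar}$. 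Without this halving/duality step (or an equivalent device, e.g.\ running the argument for $T^{-1}$ on the second half, where the points are separated in the stable direction), your entropy bound is insufficient and the proof does not close.

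Two smaller remarks. First, your ``alternative adelic route'' is circular as stated: the paper derives the adelic equidistribution (Theorem~\ref{thm:main3}) \emph{from} Theorem~\ref{thm:main}, and there is no off-the-shelf equidistribution theorem for divergent adelic torus orbits to invoke here. Second, even on the half-window one cannot uniformly control the diameter of the atoms of $\pp_0^{\flr{\ln q}}$ near the cusp; the paper needs the good-partition construction (Lemma~\ref{lem:bowen_control}) so that the separation argument applies to a set of atoms carrying $1-o(1)$ of the mass, with the exceptional mass controlled by assumption (ii). Your proposal gestures at the cusp issue but does not supply this mechanism.
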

\begin{remark}
Note that in Theorem~\ref{thm:main} we have that $\av{\Lambda_q}=\varphi(q)$ is the Euler's totient function and it is well known
that $\lim \frac{\ln \varphi(q)}{\ln q} = 1$ which is condition \eqref{ass:01} above (indeed, this claim follows from the multiplicative nature of 
the totient function). Thus, in order to deduce Theorem~\ref{thm:main} from Theorem~\ref{thm:mainug} we only need
to show that there is no escape of mass.
\end{remark}
\subsection{\label{subsec:A-more-conceptual}A more conceptual viewpoint}

Let $X_{n}=\PGL_{n}(\bZ)\backslash\PGL_{n}(\bR)$ be identified with
the space of homothety classes of lattices in $\bR^{n}$ and let $A<\PGL_{n}(\bR)$
denote the connected component of the identity of the full diagonal group. 
It is well known (see \cite{tomanov_closed_2003})
that an orbit $xA$ is divergent (i.e. the map $a\mapsto xa$ from
$A$ to $X_{n}$ is proper), if and only if it contains a homothety
class of an integral lattice. It is not hard to show that in this
case there is a unique such integral lattice which minimizes the covolume.
We refer to the square of this covolume as the \textit{discriminant}
of the divergent
orbit. Let $\cH_{q}(n)$ be the finite collection of sublattices of $\bZ^{n}$
of covolume $q$ having the property that $\pi_{i}(\Lambda)=\bZ$
for $i=1,\dots n$, where $\pi_{i}$ is the projection onto the $i$'th
axis. We leave it as an exercise 
to show that the collection of divergent
orbits of discriminant $q^2$ is exactly $\set{xA:x\in\cH_{q}(n)}$.
By abuse of notation we also think of $\cH_{q}(n)$ as a subset of
$X_{n}$. In dimension 2 we have $\cH_{q}(2)=\set{\bZ^{2}\left(\begin{smallmatrix}1 & p\\
0 & q
\end{smallmatrix}\right):p\in\inv q}$. Note that the collection of orbits $\set{x_{0}u_{p/q}A:p\in\inv q}$
in $X_{2}$ is the same as $\set{xA:x\in\cH_{q}(2)}$.

In Theorem~\ref{thm:main} we truncated the divergent orbits $\set{xA:x\in\cH_{q}(2)}$,
since we wanted to use the weak{*} topology which is defined on the
space of \textit{finite} measures on $X_{2}$. It is conceptually
better to present a certain topology on the space of \textit{locally
finite} measures which will allow Theorem~\ref{thm:main} to be restated and
conveniently generalized to a convergence statement involving the
natural locally finite $A$-invariant measures supported on the collection
of divergent orbits $\set{xA:x\in\cH_{q}(2)}$. To this end, let us
denote by $\mu_{xA}$ the measure on $X_{2}$ obtained by pushing
a fixed choice of Haar measure on $A$ via the map $a\mapsto xa$
(where $xA$ is divergent and hence the map is proper so that the
pushed measure is indeed locally finite). In dimension 2 we identify
$A\simeq\bR$ by $t\mapsto a(t)$ and choose the standard Lebesgue
measure coming from this identification.

Let $Z$ be a locally compact second countable Hausdorff space and
let $\cM(Z)$ denote the space of locally finite positive Borel measures
on $Z$ and let $\bP\cM(Z)$ denote the space of homothety classes
of such (non-zero) measures. For $\mu\in\cM(Z)$ we let $\left[\mu\right]$
denote its class. It is straightforward to define a topology on $\bP\cM(Z)$
such that the following are equivalent for $\left[\mu_{n}\right],\left[\mu\right]\in\bP\cM(Z)$
(see~\cite{ShapiraZheng}),
\begin{enumerate}
\item $\lim\left[\mu_{n}\right]=\left[\mu\right]$. 
\item\label{2ndconv} There exist constants $c_{n}$ such that for any compact set $K\subset Z$,
$c_{n}\mu_{n}|_{K}\wstar\mu|_{K}$ (which means that for every $f\in C_{c}(Z)$,\\
$c_{n}\int fd\mu_{n}\to\int fd\mu$). 
\item For every $f,g\in C_{c}(Z)$ for which $\int gd\mu\ne0$, $\limfi n{\infty}\frac{\int fd\mu_{n}}{\int gd\mu_{n}}\to\frac{\int fd\mu}{\int gd\mu}$
(and in particular, $\int gd\mu_{n}\ne0$ for all large enough $n$). 
\end{enumerate}
It is straightforward to see that if $c_{n},c_{n}'$ are sequences
of scalars such that $c_{n}\mu_{n}$ and $c_{n}'\mu_{n}$ both converge
to $\mu$  in the sense of  \eqref{2ndconv}, then $c_{n}/c_{n}'\to1$.

We propose the following. 
\begin{conjecture}
\label{conj:hd} For any dimension $n$, as $q\to\infty$, the homothety
class of the locally finite measure $\sum_{x\in\cH_{q}(n)}\mu_{xA}$
converges in the above topology to the homothety class of the $\PGL_{n}(\bR)$-invariant
measure on $X_{n}$.
\end{conjecture}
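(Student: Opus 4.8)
The plan is to prove the conjecture outright for $n=2$, where it is an essentially formal consequence of Theorem~\ref{thm:main}, and to describe the route one would attempt for $n\ge 3$. For $n=2$ the only input beyond Theorem~\ref{thm:main} is a uniform truncation estimate. Fix a compact $K\subseteq X_2$; by Mahler's criterion there is $\rho=\rho(K)\in(0,1]$ such that every lattice class in $K$ has all its nonzero vectors of length $\ge\rho$ (note that $\bZ^2u_{p/q}a(t)$ has covolume $1$ for every $t$). Since $\bZ^2u_{p/q}a(t)$ contains $(0,1)u_{p/q}a(t)=(0,e^{t/2})$ and $(q,-p)u_{p/q}a(t)=(qe^{-t/2},0)$, the membership $x_0u_{p/q}a(t)\in K$ forces $e^{t/2}\ge\rho$ and $qe^{-t/2}\ge\rho$, i.e.\ $t\in[-C,\,2\ln q+C]$ with $C=C(K):=-2\ln\rho$. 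Hence for every $f\in C_c(X_2)$ supported in $K$ and all $q$, $p\in\inv q$,
\[
\bigav{\int_{X_2}f\,d\mu_{x_0u_{p/q}A}-\int_0^{2\ln q}f(x_0u_{p/q}a(t))\,\dt}\ \le\ 2C\norm{f}_\infty ,
\]
because $\int_{X_2}f\,d\mu_{x_0u_{p/q}A}=\int_\bR f(x_0u_{p/q}a(t))\,\dt$ and only $t\in[-C,2\ln q+C]$ contributes. Using $\set{x_0u_{p/q}A:p\in\inv q}=\set{xA:x\in\cH_q(2)}$, so that $\sum_{x\in\cH_q(2)}\mu_{xA}=\sum_{p\in\inv q}\mu_{x_0u_{p/q}A}$, and putting $c_q:=(2\varphi(q)\ln q)^{-1}$, we obtain
\[
c_q\int_{X_2}f\,d\Big(\sum_{x\in\cH_q(2)}\mu_{xA}\Big)\ =\ \int_{X_2}f\,d\Big(\tfrac{1}{\varphi(q)}\sum_{p\in\inv q}\delta_{x_0u_{p/q}}^{[0,2\ln q]}\Big)\ +\ O\!\left(\tfrac{C\norm{f}_\infty}{\ln q}\right),
\]
which tends to $\int_{X_2}f\,d\mu_{Haar}$ by Theorem~\ref{thm:main}. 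Since one sequence $c_q$ works for every compact $K$ and $\mu_{Haar}\ne 0$, this is exactly condition~\eqref{2ndconv} in the description of the topology on $\bP\cM(X_2)$, which is equivalent to $\big[\sum_{x\in\cH_q(2)}\mu_{xA}\big]\to[\mu_{Haar}]$. This proves the conjecture for $n=2$.

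For $n\ge 3$ the same architecture applies. One first describes, for a fixed large compact $K\subseteq X_n$ and each $x\in\cH_q(n)$, the non-escape region $\Omega_x=\set{a\in A:xa\in K}$: by Mahler's criterion and divergence of $xA$ this is a bounded convex polytope, cut out by finitely many inequalities $\ln\av{v_i}+t_i\ge\on{const}$ coming from the short vectors of the minimal integral lattice on the orbit, with volume expected to be $\asymp(\ln q)^{n-1}$ and --- crucially --- asymptotically translation invariant (F{\o}lner) as $q\to\infty$. Granting the corresponding uniform truncation estimate, the conjecture in dimension $n$ reduces to showing that the truncated divergent-orbit packets equidistribute,
\[
\frac{1}{\sum_{x\in\cH_q(n)}\on{vol}(\Omega_x)}\sum_{x\in\cH_q(n)}\mu_{xA}\big|_{\Omega_x}\ \wstar\ \mu_{Haar}
\]
on compact subsets of $X_n$. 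Imitating the adelic method of this paper, one would realize $\bigsqcup_{x\in\cH_q(n)}xA$ --- up to truncation and up to the explicit combinatorics of $\cH_q(n)$ --- as a single orbit of $A_\bR$ inside $\PGL_n(\bQ)\backslash\PGL_n(\bA)$ translated by the diagonal tori at the places $p\mid q$, and then invoke a higher-rank equidistribution principle. Two routes suggest themselves: (i) measure rigidity for the rank $n-1\ge 2$ diagonal action --- one shows that a weak-$*$ limit is a probability measure, is $A$-invariant (immediate from the F{\o}lner property), and has positive entropy or does not concentrate on proper divergent-orbit data, then applies the Einsiedler--Katok--Lindenstrauss classification, in the spirit of the Einsiedler--Lindenstrauss--Michel--Venkatesh study of periodic torus-orbit packets; or (ii) a ``thickened Hecke orbit'' argument combining equidistribution of Hecke points (Clozel--Oh--Ullmo, Eskin--Oh) for $\cH_q(n)\subset X_n$ with mixing of the $A$-flow over the expanding regions $\Omega_x$.

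The hard part is precisely what separates $n=2$ from $n\ge3$: one must (a) rule out escape of mass for the truncated averages, and (b) supply the entropy / non-concentration input feeding the rigidity theorem (or, on route (ii), control uniformly the interplay between the variable regions $\Omega_x$ and the Hecke average). For $n=2$ the absence of escape of mass (hypothesis~\eqref{ass:02} of Theorem~\ref{thm:mainug}, which by the remark following that theorem is all one needs in order to pass to Theorem~\ref{thm:main}) rests on number-theoretic counting of the type underlying the Heilbronn--Ustinov asymptotics, and no analogue of this is evident in higher rank; similarly, extracting positive entropy from the arithmetic of $\cH_q(n)$ is delicate because the relevant orbits are divergent rather than periodic. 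I expect that clearing these two hurdles --- above all the non-escape of mass --- is the crux of the conjecture.
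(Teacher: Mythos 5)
Your $n=2$ argument is correct and is essentially the paper's own treatment: the statement you prove is Theorem~\ref{thm:main2}, and your uniform truncation estimate is exactly Lemma~\ref{lem:to_locally_finite} (the paper phrases the Mahler-type bound via the height sets $X^{\le M}$ rather than via a shortest-vector constant $\rho(K)$, but the computation --- the orbit leaves any fixed compact set outside a $t$-interval of the form $[-C,2\ln q+C]$, so the discrepancy between $\frac{1}{2\ln q}\mu_{x_0u_{p/q}A}$ and $\delta_{x_0u_{p/q}}^{[0,2\ln q]}$ against $f\in C_c(X_2)$ is $O(\norm{f}_\infty/\ln q)$ --- is the same), after which Theorem~\ref{thm:main} and criterion~\eqref{2ndconv} for the topology on $\bP\cM(X_2)$ give the convergence of homothety classes, just as in the paper.

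Be clear, however, that this does not prove the statement as posed: the statement is the conjecture for all $n$, and your discussion of $n\ge 3$ is a roadmap, not a proof. You candidly flag the two missing ingredients --- non-escape of mass for the truncated packets over the variable F{\o}lner regions $\Omega_x$, and the entropy/non-concentration input needed to invoke higher-rank measure rigidity (or, on your alternative route, uniform control of the Hecke-plus-mixing interplay) --- and neither is supplied. This is consistent with the paper, which likewise only establishes the case $n=2$ (Theorem~\ref{thm:main2}) and leaves the general case open; just do not present the $n\ge3$ paragraph as part of a proof. One further caution about your sketch: the maximal-entropy mechanism behind Theorem~\ref{thm:mainug} is genuinely rank-one (uniqueness of the measure of maximal entropy for a single diagonal element), so in higher rank the entropy input cannot be obtained by the same counting-plus-separation argument and would have to come from a different source, which is precisely why the problem is stated as a conjecture.
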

\begin{theorem}
\label{thm:main2} Conjecture~\ref{conj:hd} holds for $n=2$.
\end{theorem}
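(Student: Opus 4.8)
The plan is to deduce Theorem~\ref{thm:main2} directly from Theorem~\ref{thm:main}, using the description of convergence in $\bP\cM(X_2)$ provided by item~\eqref{2ndconv} of \S\ref{subsec:A-more-conceptual}. First I would unwind the identifications made there. Since $\smallmat{1 & p\\ 0 & q}=u_{p/q}\smallmat{1 & 0\\ 0 & q}$ and $\smallmat{1 & 0\\ 0 & q}=a(\ln q)$ in $\PGL_2(\bR)$, the divergent orbit attached to $x=\bZ^2\smallmat{1 & p\\ 0 & q}\in\cH_q(2)$ equals $x_0u_{p/q}A$; moreover $\mu_{xA}$ is the push-forward of Lebesgue measure $\dt$ on $\bR\simeq A$ under $t\mapsto x_0u_{p/q}a(t)$, since reparametrizing $A$ by the translation $t\mapsto t+\ln q$ leaves the push-forward unchanged. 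Hence, setting $\nu_q:=\sum_{x\in\cH_q(2)}\mu_{xA}=\sum_{p\in\inv q}\mu_{x_0u_{p/q}A}$, for any $f\in C_c(X_2)$,
\[
\int_{X_2}f\,d\nu_q=\sum_{p\in\inv q}\int_{\bR}f\pa{x_0u_{p/q}a(t)}\dt .
\]

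The core step is then to choose the normalizing constants $c_q:=(2\ln q\cdot\vphi(q))^{-1}$ and to show $c_q\int f\,d\nu_q\to\int f\,d\mu_{Haar}$ for every $f\in C_c(X_2)$, which by the equivalence in \S\ref{subsec:A-more-conceptual} is precisely the assertion $[\nu_q]\to[\mu_{Haar}]$. Splitting the $t$-integral at $0$ and $2\ln q$, the middle range contributes $\frac{1}{\vphi(q)}\sum_{p\in\inv q}\int f\,d\delta_{x_0u_{p/q}}^{[0,2\ln q]}$, which tends to $\int f\,d\mu_{Haar}$ by Theorem~\ref{thm:main}. For the two tails I would use Mahler's compactness criterion exactly as in the life-span discussion preceding~\eqref{eq:mupq}: if $K=\supp f$ and $\eps_K\in(0,1)$ is such that every lattice in $K$ has systole at least $\eps_K$, then the vectors $(0,e^{t/2})$ and $(qe^{-t/2},0)$ of the covolume-one lattice $\bZ^2u_{p/q}a(t)$ force $t\in[\,2\ln\eps_K,\ 2\ln q+2\ln(1/\eps_K)\,]$ whenever $x_0u_{p/q}a(t)\in K$, uniformly in $p$ and $q$. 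Therefore, for each $p$,
\[
\bigav{\pa{\int_{-\infty}^{0}+\int_{2\ln q}^{\infty}}f\pa{x_0u_{p/q}a(t)}\dt}\ \le\ 4\ln(1/\eps_K)\,\norm{f}_\infty ,
\]
so the tail contribution to $c_q\int f\,d\nu_q$ is at most $2\ln(1/\eps_K)\norm{f}_\infty/\ln q\to 0$. Adding up the three ranges gives $c_q\int f\,d\nu_q\to\int f\,d\mu_{Haar}$; since $\mu_{Haar}$ is the $\PGL_2(\bR)$-invariant probability measure, this is precisely Conjecture~\ref{conj:hd} for $n=2$.

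I do not anticipate a real obstacle here. Theorem~\ref{thm:main2} is essentially a reformulation of Theorem~\ref{thm:main} in the language of homothety classes of locally finite measures, and all the genuine work is contained in Theorem~\ref{thm:main}, hence in Theorem~\ref{thm:mainug} together with the non-escape-of-mass input. The only point that requires a little care is the uniform tail bound above, but it is elementary and already implicit in \S\ref{subsec:Divergent-geodesics}: the relevant fact is that, outside the window $[0,2\ln q]$, the excursions of a divergent $A$-orbit of discriminant $q^2$ into a fixed compact subset of $X_2$ have total length bounded by a constant depending only on that compact set and not on $q$, reflecting that each of the two ends of such an orbit is controlled by a single one-parameter unipotent direction.
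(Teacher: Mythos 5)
Your proposal is correct and follows essentially the same route as the paper: the paper's Lemma~\ref{lem:to_locally_finite} is exactly your tail estimate (with the height bound $M$ playing the role of $1/\eps_K$), showing that $\frac{1}{2\ln q}\mu_{qA}$ and the truncated averages $\frac{1}{\vphi(q)}\sum_p\delta_{x_0u_{p/q}}^{[0,2\ln q]}$ differ by $O(\ln(1/\eps_K)/\ln q)\norm{f}_\infty$ on any fixed $f\in C_c(X_2)$, after which Theorem~\ref{thm:main} and criterion~\eqref{2ndconv} finish the argument just as you describe.
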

We will see in  Lemma~\ref{lem:to_locally_finite} that Theorem~\ref{thm:main2} 
follows from (and is in fact equivalent to) Theorem~\ref{thm:main}.

\subsection{Adelic orbits}

We now concentrate on the 2-dimensional case. Yet another conceptual
view point that we wish to present and which puts the statement of
Theorem~\ref{thm:main2} in a natural perspective is as follows.
Let $\bA$ denote the ring of adeles over $\bQ$ and 
consider the space $X_{\bA}=\Gamma_{\bA}\backslash G_{\bA}$ (where
$G_{\bA}=\PGL_{2}(\bA)$ and $\Ga_{\bA}=\PGL_{2}(\bQ)$). Let
$A_{\bA}<G_{\bA}$ denote the subgroup of diagonal matrices.
Note that
the orbit $\tilde{x}_{0}A_{\bA}$ is a closed orbit (where $\tilde{x}_{0}$
denotes the identity coset $\Ga_{\bA}$). In particular, fixing once
and for all a Haar measure on $A_{\bA}$ we obtain a Haar measure
on the quotient $\on{stab}_{A_{\bA}}(\tilde{x}_{0})\backslash A_{\bA}$
and by pushing the latter into $X_{\bA}$ via the proper embedding
induced by the map $a\mapsto \tilde{x}_{0}a$ we obtain an $A_{\bA}$-invariant
locally finite measure $\mu_{\tilde{x}_{0}A_{\bA}}$ supported on
the closed orbit $\tilde{x}_{0}A_{\bA}$. Theorem~\ref{thm:main2}
(and hence Theorem~\ref{thm:main})
is implied (and in fact equivalent as will be seen by the proof) to
the following. 
\begin{theorem}
\label{thm:main3} For any sequences $g_{i}\in G_{\bA}$
such that (i) the real component of $g_{i}$ is trivial, (ii) the
projection of $g_{i}$ to $G_{\bA}/ A_{\bA}$ is unbounded,
the sequence of homothety classes of the locally finite measures $(g_{i})_{*}\mu_{\tilde{x}_{0}A_{\bA}}$
converges in the topology introduced above to the homothety class
of the $G_{\bA}$-invariant measure on $X_{\bA}$. 
\end{theorem}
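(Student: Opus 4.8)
The plan is to deduce Theorem~\ref{thm:main3} from Theorem~\ref{thm:main} (equivalently Theorem~\ref{thm:main2}) by unpacking the adelic picture and translating the hypotheses on $g_i\in G_\bA$ into the arithmetic data appearing in $\cH_q(2)$. First I would use strong approximation for $\PGL_2$: since $\PGL_2(\bQ)$ is dense in $\PGL_2(\bA_f)$ and the orbit $\tilde x_0 A_\bA$ is closed, the quotient $X_\bA$ fibers over $X_2$ with compact fibers built from $\prod_p \PGL_2(\bZ_p)$, and the $G_\bA$-invariant measure on $X_\bA$ disintegrates accordingly over $\mu_{Haar}$ on $X_2$. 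Because the real component of $g_i$ is trivial, the pushforward $(g_i)_*\mu_{\tilde x_0 A_\bA}$ is supported on a union of divergent $A$-orbits in $X_2$ (lifted to $X_\bA$), and in fact the finite-adelic translate encodes exactly a sublattice of $\bZ^2$: a finite-adelic element $g_i$ with components $g_{i,p}\in\PGL_2(\bQ_p)$ determines, via the map $\bZ^2\mapsto \bZ^2\cap\bigcap_p(\bZ_p^2 g_{i,p})$, an element of $\cH_{q_i}(2)$ for some $q_i$ (up to the $A_\bA$-action, which moves one within a single divergent orbit), and more generally a collection $\{xA : x\in\Lambda_{q_i}\}$ indexed by a subset $\Lambda_{q_i}\subseteq\inv{q_i}$ once one accounts for the stabilizer. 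Condition (ii), that the projection of $g_i$ to $G_\bA/A_\bA$ is unbounded, should translate precisely into $q_i\to\infty$ (and more: it forces the relevant index sets $\Lambda_{q_i}$ to be large, of size $q_i^{1-o(1)}$, since an element of bounded denominator at every prime stays in a compact part of $G_\bA/A_\bA$).

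Next I would reduce the locally-finite convergence statement to the truncated finite-measure statement of Theorem~\ref{thm:main}, using the characterization of the topology on $\bP\cM(X_\bA)$ recalled in \S\ref{subsec:A-more-conceptual}: it suffices to show that for a fixed compact $K\subset X_\bA$ and suitable normalizing constants $c_i$, one has $c_i\,(g_i)_*\mu_{\tilde x_0 A_\bA}|_K\wstar \mu_{G_\bA\text{-inv}}|_K$. Pulling $K$ down to $X_2$ and using the fiber structure, this is equivalent to a statement about the truncated divergent-orbit measures $\delta_{x_0 u_{p/q}}^{[0,2\ln q]}$ averaged over $p\in\Lambda_{q_i}$, which is exactly the content of Theorem~\ref{thm:mainug} provided the two hypotheses there hold: \eqref{ass:01} follows from the largeness of $\Lambda_{q_i}$ just discussed, and \eqref{ass:02}, no escape of mass, is handled by the same argument that proves it for $\Lambda_q=\inv q$ in the course of proving Theorem~\ref{thm:main} (the nondivergence estimates do not care which large subset of $\inv q$ one takes, only that it is not too sparse). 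One has to be a little careful that the interesting life-span $[0,2\ln q]$ of each orbit matches the amount of mass of $\mu_{\tilde x_0 A_\bA}$ that falls in $K$ after translation; this is where the precise bookkeeping of the normalization $c_i$ happens, and it is forced by the uniqueness statement about the $c_n$ noted just after item \eqref{2ndconv}.

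The main obstacle, I expect, is the dictionary itself: making precise how an arbitrary finite-adelic $g_i$ — which need not be "diagonal-friendly" at any given prime and may involve unbounded behaviour at infinitely many primes simultaneously — decomposes, modulo the $A_\bA$-stabilizer, into a genuine finite collection $\{xA:x\in\Lambda_{q_i}\}$ of divergent orbits of a single discriminant $q_i^2$, together with the claim that unboundedness in $G_\bA/A_\bA$ is equivalent to $q_i\to\infty$ with $\Lambda_{q_i}$ large. This requires a clean local analysis at each prime: $\PGL_2(\bQ_p)/A(\bQ_p)$ is a $p$-adic symmetric space whose "distance to the origin" is governed by the elementary divisors of the local component of $g_i$ relative to $\bZ_p^2$, and one must check that the product of these local contributions is comparable to $\ln q_i$ and that the orbit does not split into unboundedly many pieces of small discriminant each. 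Once this translation is in place, the equidistribution is not new input — it is Theorem~\ref{thm:mainug} — so the burden is entirely in the adelic reformulation and in verifying hypotheses \eqref{ass:01}--\eqref{ass:02} in this generality.
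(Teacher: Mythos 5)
Your overall strategy---reduce to the real place and invoke the equidistribution of the truncated divergent-orbit measures---is the right one and matches the paper's in spirit, but there are two genuine gaps.

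First, you never address how to pass from equidistribution of the \emph{projections} on $X_\bR$ back up to equidistribution on $X_\bA$ itself. Knowing that $(\pi^\bA_\bR)_*\bigl[(g_i)_*\mu_{\tilde x_0 A_\bA}\bigr]$ converges to $[\mu_{\bR,Haar}]$ does not by itself tell you anything about the limit on $X_\bA$: an $A_\bR$-invariant limit measure on $X_\bA$ projecting to Haar on $X_\bR$ could a priori be singular along the compact fibers. The paper closes this gap with two ingredients you omit: a compactness criterion for locally finite measures under proper maps (Theorem~\ref{thm:locally_finite_converge}), and the statement that $\mu_{\bA,Haar}$ is the \emph{unique} $A_\bR$-invariant lift of $\mu_{\bR,Haar}$ (Theorems~\ref{thm:Lifting_to_finite_is_Haar} and~\ref{thm:lift_Haar_adeles}). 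The latter is itself nontrivial: it uses the fact that entropy does not increase under factors, the characterization of $U$- and $U^{tr}$-invariance via maximal entropy of $T$ and $T^{-1}$, and then weak approximation to show that a $\PGL_2(\bR)$-invariant measure on $X_S$ is Haar. Your "disintegration over the compact fibers" remark does not substitute for this.

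Second, your dictionary between $g_i$ and arithmetic data is both more complicated and, where it differs from the paper, incorrect. The paper's Lemma~\ref{lem:infinity_and_beyond} shows that modulo $K=\prod_p\PGL_2(\bZ_p)$ on the left and $A_{\bA,f}$ on the right, every finite-adelic $g_i$ is $\bar u_{m_i/n_i}$ with $(m_i,n_i)=1$, and unboundedness in $G_\bA/A_\bA$ is exactly $n_i\to\infty$; Claim~\ref{claim:decomp_orbit} then shows that $(\pi^\bA_\bR)_*(\bar u_{m/n}\mu_{\tilde x_0 A_\bA})$ is always the \emph{full} sum $\mu_{nA}=\sum_{l\in\inv{n}}\mu_{l/nA}$, because the $\prod_{p\mid n}\bZ_p^\times$ part of the stabilizer fundamental domain surjects onto $\inv{n}$. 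So one only ever needs Theorem~\ref{thm:main}, never a proper subset $\Lambda_{q_i}$. By contrast, you allow subsets $\Lambda_{q_i}\subseteq\inv{q_i}$ of size $q_i^{1-o(1)}$ and assert that no escape of mass "does not care which large subset one takes." That is false at this level of generality: the proof of Lemma~\ref{lem:nem} bounds the number of bad $p$ by a quantity of order $\varphi(q)/M^2$ using the full multiplicative structure of $\inv{q}$, and this bound is useless against a set of size $\varphi(q)/q^{o(1)}$. Indeed, hypothesis~\eqref{ass:02} of Theorem~\ref{thm:mainug} is stated as a hypothesis precisely because it does not follow from the size condition~\eqref{ass:01}. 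Your proposed verification of the hypotheses of Theorem~\ref{thm:mainug} in this generality would therefore fail; the correct route is to show the subset never arises.
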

In fact we propose the following. 
\begin{conjecture}
\label{conj:general2dim} In the statement of Theorem~\ref{thm:main3}
one can omit requirement (i) from the sequence $g_{i}$. 
\end{conjecture}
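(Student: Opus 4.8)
The plan is to reduce Conjecture~\ref{conj:general2dim} to Theorem~\ref{thm:main3} by ``absorbing'' the real component of $g_i$ into a compact perturbation at the archimedean place, using the fact that the archimedean factor of $G_\bA$ has a compact-modulo-$A_\infty$ decomposition and that the truncated divergent orbit measures we are averaging are, by Theorem~\ref{thm:main}, already becoming invariant in the limit. Concretely, write $g_i = g_i^{(\infty)} g_i^{(f)}$ where $g_i^{(\infty)}\in G_\infty=\PGL_2(\bR)$ and $g_i^{(f)}$ lies in the finite-adele factor. By the Cartan (or Iwasawa) decomposition $G_\infty = K_\infty A_\infty K_\infty$ (resp. $G_\infty=K_\infty A_\infty N_\infty$), we may write $g_i^{(\infty)} = k_i a_i k_i'$ with $k_i,k_i'\in K_\infty$ compact and $a_i\in A_\infty\subset A_\bA$. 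Since $\mu_{\tilde x_0 A_\bA}$ is $A_\bA$-invariant, pushing by $a_i$ does nothing, so $(g_i)_*\mu_{\tilde x_0 A_\bA} = (k_i)_* (k_i' g_i^{(f)})_*\mu_{\tilde x_0 A_\bA}$, and the rightmost factor $k_i' g_i^{(f)}$ has \emph{trivial} real component only up to the compact element $k_i'$. The idea is then to pass to a subsequence along which $k_i\to k$ and $k_i'\to k'$ in the compact group $K_\infty$, reducing matters to sequences of the form $k(h_i)_*\mu_{\tilde x_0 A_\bA}$ where $h_i = k_i' g_i^{(f)} (k')^{-1}\cdot k'$ has real component bounded (lying in a fixed compact set), and then showing a bounded real perturbation does not affect the conclusion.

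The key steps, in order, are as follows. First, I would record the decomposition above and reduce to showing: if $h_i\in G_\bA$ has real component contained in a fixed compact set $C\subset G_\infty$ and the projection of $h_i$ to $G_\bA/A_\bA$ is unbounded, then $[(h_i)_*\mu_{\tilde x_0 A_\bA}]$ converges to the invariant class. Second, I would observe that being unbounded in $G_\bA/A_\bA$ while having bounded real component forces the finite component of $h_i$ to be unbounded in $G_{\bA_f}/A_{\bA_f}$; moreover any weak-$*$ (local) limit of the homothety classes $[(h_i)_*\mu_{\tilde x_0 A_\bA}]$ is $A_\bA$-invariant (the orbit is $A_\bA$-invariant and pushing by $h_i$ preserves this up to the fixed compact real perturbation, which becomes harmless in the limit after extracting a convergent subsequence from $C$). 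Third, and this is the crux, I would identify all possible such $A_\bA$-invariant limits. Here one uses a measure-rigidity / linearization input in the spirit of the unipotent and diagonal-flow literature (Ratner-type classification of $A_\bA$-invariant ergodic measures on $X_\bA$, together with the geometry of the specific degenerating sequence): combined with the no-escape-of-mass statement, which I would establish exactly as in the proof of Theorem~\ref{thm:main3}/\ref{thm:main} (the escape-of-mass argument there is insensitive to a bounded real perturbation, since it is proved via a Dani--Margulis--type integral inequality that is uniform over $h_i$ ranging in $A_\bA$-translates of a compact set), one concludes the only candidate limit is $\mu_{G_\bA\text{-inv}}$. Finally I would note that since every subsequence has a further subsequence converging to the invariant class, the full sequence converges.

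The main obstacle I expect is the third step: the existing proof of Theorem~\ref{thm:main3} very likely exploits requirement~(i) precisely to place the degenerating orbit inside a single, well-understood family (e.g.\ the sublattices $\cH_q(2)$, parametrized purely $p$-adically) where the combinatorics of continued fractions / Hecke correspondences can be brought to bear; dropping~(i) introduces genuine archimedean motion that is not literally a diagonal translate, so one cannot simply quote the $p$-adic equidistribution input. The honest resolution is to show that a bounded archimedean perturbation of a $p$-adically degenerating divergent-orbit family is, for the purpose of weak-$*$ accumulation, ``the same'' as the unperturbed one --- either by a direct continuity/uniformity argument (the perturbation $k_i'$ varies in a compact set, so one gets equicontinuity of the family $t\mapsto \tilde x_0 h_i a(t)$ over compact time windows, uniformly in $i$), or by absorbing $C$ into an enlarged but still finite collection of divergent orbits and re-running the counting. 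A secondary, more technical obstacle is checking that the no-escape-of-mass estimate is genuinely uniform under the bounded real perturbation; I expect this to be routine since the relevant height functions on $X_\bA$ are $K_\infty$-bi-invariant up to bounded multiplicative constants, but it must be verified rather than asserted.
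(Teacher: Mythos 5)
First, note that the paper does not prove this statement: Conjecture~\ref{conj:general2dim} is stated as an open problem, and the authors only remark that a proof might plausibly be obtained by combining their entropy/lifting techniques with the results of \cite{oh2014limits} (which handle the purely real case, i.e.\ $g_i\in\PGL_2(\bR)$ unbounded modulo $A$). So there is no ``paper proof'' to match your proposal against; the question is whether your sketch closes the gap, and it does not.

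The first reduction already fails. Writing $g_i^{(\infty)}=k_ia_ik_i'$ (Cartan) and claiming that $a_i$ can be dropped because $\mu_{\tilde{x}_0A_{\bA}}$ is $A_{\bA}$-invariant is incorrect: since the action is by right translation, $(g_i)_*\mu_{\tilde{x}_0A_{\bA}}=(k_i'g_i^{(f)})_*(a_i)_*(k_i)_*\mu_{\tilde{x}_0A_{\bA}}$, and $(k_i)_*\mu_{\tilde{x}_0A_{\bA}}$ is invariant under $k_i^{-1}A_{\bA}k_i$, not under $A_{\bA}$, so the middle factor $a_i$ cannot be absorbed. If instead you use an Iwasawa decomposition with the $A$-factor on the absorbable side, $g_i^{(\infty)}=a_in_ik_i$, then the residual real part $n_ik_i$ contains a unipotent $n_i$ which is in general unbounded, so you have not reduced to a fixed compact real perturbation. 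In other words, the genuinely new content of the conjecture is exactly the case of unbounded archimedean motion transverse to $A$ (e.g.\ $g_i=u_{s_i}g_i^{(f)}$ with both $s_i$ and the finite part degenerating), which your reduction silently excludes; this is the regime treated (for purely real $g_i$, on $X_2$) by \cite{oh2014limits}, and combining it with nontrivial finite-adelic degeneration is precisely what remains open. Your third step is also unfounded as stated: there is no Ratner-type classification of $A_{\bA}$-invariant ergodic measures (the acting group is diagonalizable, and such measures are plentiful); the paper's actual mechanism is uniqueness of the measure of maximal entropy (Theorem~\ref{thm:maximalentropy}), and to use it for these translates one would have to redo both the entropy lower bound (the separation Lemma~\ref{lem:separation} is specific to the points $x_0u_{p/q}$) and the non-escape of mass (Lemma~\ref{lem:nem} is an explicit counting argument about rationals, not a Dani--Margulis inequality, and it is not uniform under unbounded unipotent perturbations at the real place). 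So the proposal, while correctly isolating where the difficulty lies, does not constitute a proof and its opening reduction is mathematically wrong.
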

The main result in~\cite{oh2014limits} can be interpreted as saying that
if $g_{i}\in\PGL_{2}(\bR)$ is unbounded modulo the diagonal group
$A$, then the homothety class of $(g_{i})_{*}\mu_{x_{0}A}$ converges
in the topology introduced above to the homothety class of $\mu_{Haar}$.
It seems plausible (although not immediate as far as we can see) that
a proof of Conjecture~\ref{conj:general2dim} might be obtained by
combining the techniques of~\cite{oh2014limits} and ours. 

\subsection{Structure of the paper and outline of the proofs}

In \S\ref{sec:max_entropy_for_partial_orbits} we prove Theorem~\ref{thm:mainug}. 
We show that any weak* accumulation point of the sequence of measures appearing 
in the statement
(which
is automatically $A$-invariant) has the same entropy with respect
to say, $a(1)$, as the measure $\mu_{Haar}$. Since $\mu_{Haar}$
is the unique measure with maximal entropy this establishes that $\mu_{Haar}$
is the only possible weak{*} accumulation point of the above sequence
and finishes the proof. 
We then deduce Theorem~\ref{thm:main} by verifying that the two conditions for 
applying Theorem~\ref{thm:mainug} hold for $\Lam_q = \inv{q}$. Here the non-trivial part
is to show that in this case there is no
escape of mass.

In \S\ref{sec:adeles} we prove that Theorems~\ref{thm:main}, \ref{thm:main2},
\ref{thm:main3} are equivalent. In \S\ref{sec:Application-to-CFE} we
review the relation between the $A$ action on $X_2$ and the Gauss
map and isolate the necessary technical statements which will allow
us to deduce Theorem~\ref{thm:main_app} from Theorem~\ref{thm:main}. We end \S\ref{sec:Application-to-CFE}
by proving Theorem~\ref{thm:Zaremba} the proof of which follows along similar lines as the proof of Theorem~\ref{thm:main_app}.

\begin{acknowledgments}
The authors would like to thank Manfred Einsiedler for valuable discussions and acknowledge the support of ISF grant 357/13.
\end{acknowledgments}
\section{\label{sec:max_entropy_for_partial_orbits}Proof of the main theorem}
In this section we prove Theorem~\ref{thm:mainug} and deduce Theorems~\ref{thm:main}. 
We start with some notation and definitions and then, 
in \S\ref{ssec:nad} make a 
minor reduction to replace the measures that appear in the statement of Theorem~\ref{thm:mainug} with  a 
discrete version of themselves which is better suited for the entropy argument. In \S\ref{subsec:Maximal_entropy} we 
state the main tool we use in the proof - uniqueness of measure with maximal entropy - and establish maximal entropy
of the appropriate weak* limits which finishes the proof of Theorem~\ref{thm:mainug}. In 
\S\ref{subsec:No_escape_of_mass} we verify that the measures
appearing in the statement of Theorem~\ref{thm:main} satisfy the conditions in Theorem~\ref{thm:mainug} 
and by that conclude the 
proof of Theorem~\ref{thm:main}. Finally, in \S\ref{subsec:upgrade} we use the ergodicity of the Haar measure 
in order to upgrade the averaged result from Theorem~\ref{thm:main} to Corollary~\ref{cor:almost_all_converge}.

In this section we set $G=\SL_{2}\left(\RR\right),\;\Gamma=\SL_{2}\left(\ZZ\right)$
and are interested in equidistribution in the space $X=X_{2}=\Gamma\backslash G\cong\PGL_{2}\left(\ZZ\right)\backslash\PGL_{2}\left(\RR\right)$.
The group $G$ then acts naturally on $X$ and on the space of
functions on $X$. We denote the positive diagonal and upper
unipotent subgroups of $\SL_{2}\left(\RR\right)$ by $A,U$ respectively as in~\eqref{eq:1518}.

As mentioned in \S\ref{subsec:Divergent-geodesics}, we will work with
measures on partial $A$-orbit defined as follows.
\begin{definition}

\begin{enumerate}[(i)]
\item For a finite set $\Lambda\subseteq X$ we write $\delta_\Lambda=\frac{1}{|\Lambda|} \sum_{x\in \Lambda} \delta_x$. 
	We will sometimes write $\delta_{p/q}$ instead of $\delta_{x_0u_{p/q}}$, and given a set $\Lambda_q\subseteq \inv{q}$, 
	we will identify it with the set $\set{x_0 u_{p/q} : p\in \Lambda_q}\subseteq X$, and simply write $\delta_{\Lambda_q}$.
\item Given a measure $\mu$, a segment $[a,b]\subseteq \mathbb{R}$ and an integer $k\in \ZZ$, we define the averages $\mu^{[a,b]}=\frac{1}{b-a}\int_a^b a(-t)\mu \dt$ and $\mu^k=\frac{1}{k} \sum_0^{k-1} a(-j)\mu $. Note that with these definitions
$\del_x^k = \frac{1}{k}\sum_0^{k-1} \del_{xa(j)}$ and similarly, $\del_x^{[a,b]} = \frac{1}{b-a}\int_a^b \del_{xa(t)} dt$.
\end{enumerate}
\end{definition}
\subsection{A reduction}\label{ssec:nad}

The following statement  is very similar to that of Theorem~\ref{thm:mainug}. The only difference is that 
the continuous interval $[0,2\ln q]$ is replaced by the discrete first half of it $\bZ\cap [0,\ln q]$.
\begin{theorem}\label{thm:mainug2}
Let $\Lam_q\subset \inv{q}$ be subsets such that 
\begin{enumerate}[(i)]
\item\label{ass:1} $\lim\frac{\ln \av{\Lam_q}}{\ln q} = 1$,
\item \label{ass:2} The sequence of measures
$\delta_{\Lam_q}^{\flr{\ln q}}$ does not exhibit escape of mass (that is, any weak* limit of it is a probability measure). 
\end{enumerate}
Then $\delta_{\Lam_q}^{\flr{\ln q}}\wstar \mu_{Haar}$.
\end{theorem}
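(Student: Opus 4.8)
The plan is to prove Theorem~\ref{thm:mainug2} by the entropy method: every weak* accumulation point of the sequence $\bigl(\delta_{\Lam_q}^{\flr{\ln q}}\bigr)_q$ will be identified with the unique measure of maximal entropy of the time-one map of the geodesic flow, which is $\mu_{Haar}$. Set up notation: let $\phi\colon X\to X$ be right translation by $a(1)$, so that $\delta_{\Lam_q}^{\flr{\ln q}}=\frac1{\flr{\ln q}}\sum_{j=0}^{\flr{\ln q}-1}(\phi^{j})_{*}\delta_{\Lam_q}$, where $(\phi^{j})_{*}\delta_{\Lam_q}$ denotes the pushforward of $\delta_{\Lam_q}$ under $x\mapsto xa(j)$. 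Let $\mu$ be an accumulation point, say $\delta_{\Lam_{q_i}}^{\flr{\ln q_i}}\wstar\mu$. By hypothesis~\eqref{ass:2} the limit $\mu$ is a probability measure, and since $\bigl\|(\phi)_{*}\delta_{\Lam_q}^{\flr{\ln q}}-\delta_{\Lam_q}^{\flr{\ln q}}\bigr\|\le 2/\flr{\ln q}\to 0$ in total variation, $\mu$ is $\phi$-invariant. We invoke two standard facts about $\phi$ on $X=\Gamma\backslash\SL_2(\bR)$: first, by the Margulis--Ruelle inequality $h_\nu(\phi)\le 1$ for every $\phi$-invariant probability measure $\nu$, with $1$ the logarithm of the factor $e$ by which $\phi$ expands the unstable horospherical subgroup $U$ (as $xu_sa(1)=(xa(1))\,u_{se}$); second, $\mu_{Haar}$ is the only $\phi$-invariant probability measure achieving $h(\phi)=1$ --- indeed, equality in the Margulis--Ruelle inequality forces the conditional measures of $\nu$ along $U$-orbits to be Lebesgue, so $\nu$ is $U$-invariant, and a $U$-invariant $\phi$-invariant probability measure equals $\mu_{Haar}$ by the classification of horocycle-invariant measures. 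Granting these, it suffices to prove $h_\mu(\phi)\ge1$: this forces $\mu=\mu_{Haar}$, so $\mu_{Haar}$ is the only accumulation point, and by~\eqref{ass:2} the entire sequence converges to it, which is Theorem~\ref{thm:mainug2}.

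The bound $h_\mu(\phi)\ge1$ is the crux, and its source is geometric. The $|\Lam_q|$ starting points $x_0u_{p/q}$, $p\in\Lam_q$, are $\tfrac1q$-separated along the unstable direction $U$, while $\phi$ expands $U$ by $e$ per step and hence by $\asymp q$ over a window of length $\flr{\ln q}$. Fix a small $\rho>0$ and a finite partition $\cP$ of $X$ into sets of diameter $\le\rho$; write $N=\flr{\ln q}$ and $\cP^{(N)}=\bigvee_{j=0}^{N-1}\phi^{-j}\cP$. For $p\ne p'$ in $\Lam_q\subseteq\{1,\dots,q\}$, the exact identity $x_0u_{p'/q}a(j)=\bigl(x_0u_{p/q}a(j)\bigr)\,u_{(p'-p)e^{j}/q}$ shows that the size $|p'-p|e^{j}/q$ of the unstable displacement between these two orbit points grows geometrically from $\tfrac{|p'-p|}q<1$ at $j=0$ to $\asymp|p'-p|$ at $j=N-1$, so it crosses the scale $\rho$ at some integer $j<N$; at that step the two points fall into distinct atoms of $\cP$ --- \emph{provided} they then lie in a fixed compact region on which a $U$-displacement of size $\rho$ is resolved by $\cP$. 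Thus, up to the cusp corrections discussed below, no atom of $\cP^{(N)}$ contains more than $O(1)$ of the points $x_0u_{p/q}$, so, by hypothesis~\eqref{ass:1},
\[
H_{\delta_{\Lam_q}}\bigl(\cP^{(N)}\bigr)\ \ge\ \ln|\Lam_q|-o(\ln q)\ =\ (1-o(1))\ln q .
\]

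To pass from this to $h_\mu(\phi)$, one uses the standard almost-subadditivity of entropy under averaging. For fixed $M$, grouping $\{0,\dots,N-1\}$ into blocks of length $M$ and averaging over the $M$ phases, subadditivity of $H$ together with concavity of $\nu\mapsto H_\nu(\cdot)$ give
\[
H_{\delta_{\Lam_q}^{N}}\bigl(\cP^{(M)}\bigr)\ \ge\ \frac MN\,H_{\delta_{\Lam_q}}\bigl(\cP^{(N)}\bigr)-O\!\Bigl(\tfrac{M^{2}}N\Bigr)\ \ge\ M\,(1-o_q(1)).
\]
Choosing $\cP$ with $\mu(\partial\cP)=0$ (so $\mu(\partial\cP^{(M)})=0$ for all $M$), we let $q=q_i\to\infty$ and use weak* convergence to get $H_\mu(\cP^{(M)})\ge M$ for every $M$, and then $M\to\infty$ to conclude $h_\mu(\phi,\cP)\ge1$, hence $h_\mu(\phi)\ge1$.

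I expect the genuine difficulty to be the geometric claim used above, i.e.\ the interaction with the cusp. Within the window $[0,\ln q]$ the truncated $A$-orbit segments do enter the cusp of $X$ --- for instance the orbit through $x_0u_{p/q}$ makes an excursion of depth $\asymp\sqrt{q/p}$ around time $\ln(q/p)$ --- and deep in the cusp the unipotent $u_{\tau}$ moves a point a hyperbolic distance far smaller than $|\tau|$, so a fixed partition no longer separates $U$-displaced orbit points. I would handle this by taking $\cP$ coarse outside a large compact $K$ (a single atom $X\smallsetminus K$ together with atoms of diameter $\le\rho$ inside $K$), so separation is demanded only while the orbit point lies in $K$, and by using hypothesis~\eqref{ass:2} a \emph{second} time (beyond ensuring $\mu$ is a probability measure): no escape of mass provides, for every $\veps>0$, a compact $K=K_\veps$ such that the fraction of pairs $(p,j)$ with $p\in\Lam_q$, $0\le j<\flr{\ln q}$ and $x_0u_{p/q}a(j)\in K$ is at least $1-\veps$ for all large $q$. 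This keeps the number of atoms of $\cP^{(N)}$ charged by $\delta_{\Lam_q}$ at $|\Lam_q|^{1-O(\veps)}$, yields $h_\mu(\phi)\ge 1-O(\veps)$, and letting $\veps\to 0$ finishes the proof. Turning this into a rigorous estimate --- tracking how the cusp excursions degrade the separation count and bookkeeping the $\veps$-losses through the entropy computation --- is the main technical work.
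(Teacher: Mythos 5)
Your proposal is correct and follows essentially the same route as the paper: identify each weak* accumulation point as the unique measure of maximal entropy via the $1/q$-separation of the points $x_0u_{p/q}$ along $U$, the almost-subadditivity trick to pass from $H_{\delta_{\Lam_q}}(\cP^{(N)})$ to $h_\mu(T,\cP)$, and a partition with a single coarse cusp atom whose losses are controlled by the no-escape-of-mass hypothesis. The technical step you defer --- bookkeeping how cusp excursions degrade the separation count --- is exactly the paper's Lemma~\ref{lem:bowen_control} (adapted from \cite{einsiedler_distribution_2012}), which covers each good atom of $\cP_0^N$ by at most $C^{\kappa N}$ Bowen balls, each containing at most one point of $\Lam_q$ by Lemma~\ref{lem:separation}.
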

For entropy considerations it will be more convenient to work with powers of a single transformation rather than 
with the continuous group $A$. As will be seen shortly, replacing $[0,2\ln q]$ by its first half will also be more convenient. Thus 
our plan is to establish Theorem~\ref{thm:mainug2} but first we deduce Theorem~\ref{thm:mainug} from it.
\begin{proof}[Proof of Theorem~\ref{thm:mainug} given Theorem~\ref{thm:mainug2}]
Assume $\Lam_q$ satisfies assumptions (i) and (ii) of Theorem~\ref{thm:mainug}. Let $\tau:X\to X$ be the automorphism 
taking a lattice to its dual and recall that if $x = \Ga g$ then $\tau(x) = \Ga (g^{-1})^{tr}$, where $tr$ means the transpose, 
and hence $\tau(xa(t))=\tau(x)a(-t)$ for all $t\in \RR$. Let us denote also $p\mapsto p'$
the map from $\inv{q}\to\inv{q}$ for which $p p' = -1$ modulo $q$. We claim that 
\begin{equation}\label{eq:symmetry}
 \delta_{\Lam_q}^{[\ln q ,2\ln q]} =\tau_* \delta_{\Lam_q'}^{[0, \ln q]}.
\end{equation}
To show~\eqref{eq:symmetry} we first observe the following: Fix $p\in\inv{q}$ and let $q'\in\bZ$ 
be such that $(-p)p' + q q' = 1$. We then have

\begin{align*}
 x_0u_{p/q} a(2\ln q)  
& =  
\Gamma \smallmat{
1 & p/q\\
0 & 1}\smallmat{q^{-1}&0 \\ 0 &q} =
\Ga\smallmat{q^{-1}&p\\0&q} \\
&=
\Gamma\smallmat{q & -p\\-p' & q'}\smallmat{q^{-1}&p\\0&q} = \Ga\smallmat{1&0\\ -p'/q&1}
=  \tau(x_0 u_{p'/q}).
\end{align*}
It now follows that for all $t$, $ x_0u_{p/q} a(2\ln q-t)=\tau(x_0 u_{p'/q}a(t))$, and hence \eqref{eq:symmetry} follows.
We conclude from \eqref{eq:symmetry} that
\begin{equation}\label{eq:sim2}
\delta_{\Lam_q}^{[0,2\ln q]} = \frac{1}{2} \delta_{\Lam_q}^{[0,\ln q]} + \frac{1}{2}\tau_* \delta_{\Lam_q'}^{[0,\ln q]}. 
\end{equation}
Since $\delta_{\Lam_q}^{[0,2\ln q]}$ does not exhibit escape of mass, the same is true for 
the sequence $\delta_{\Lam_q}^{[0,\ln q]}$ (as well as $\delta_{\Lam_q'}^{[0,\ln q]}$). Since 
\begin{equation}\label{eq:convcomb}
\delta_{\Lam_q}^{[0,\ln q]}=\frac{\flr{\ln q}}{\ln q} \delta_{\Lam_q}^{[0,\flr{\ln q}]} + (1-\frac{\flr{\ln q}}{\ln q})\delta_{\Lam_q}^{[\flr{\ln q}, \ln q]},
\end{equation} and $\frac{\flr{\ln q}}{\ln q}\to 1$, we conclude that 
the sequence $\delta_{\Lam_q}^{[0,\flr{\ln q}]}$ does not exhibit escape of mass. Finally, since 
\begin{equation}\label{eq:integral}
\delta_{\Lam_q}^{[0,\flr{\ln q}]} = \int_0^1 a(-t)_* \delta_{\Lam_q}^{\flr{\ln q}} dt,
\end{equation} 
we conclude that $\delta_{\Lam_q}^{\flr{\ln q}}$
does not exhibit escape of mass. We therefore obtain $\Lam_q$ satisfy conditions (i) and (ii) from Theorem~\ref{thm:mainug2} and since we assume the validity of this theorem at this point, we conclude that $\delta_{\Lam_q}^{\flr{\ln q}}\wstar \mu_{Haar}$. Since $\mu_{Haar}$ is $a(t)$-invariant, equation~\eqref{eq:integral} implies that $\delta_{\Lam_q}^{[0,\flr{\ln q}]}\wstar \mu_{Haar}$. In turn, by \eqref{eq:convcomb} we get that $\delta_{\Lam_q}^{[0,\ln q]}\wstar \mu_{Haar}$. 

A similar application of Theorem~\ref{thm:mainug2} for $\Lam_q'$ results in the conclusion that $\delta_{\Lam_q'}^{[0,\ln q]}\wstar \mu_{Haar}$ and since $\mu_{Haar}$ is $\tau$-invariant, we obtain from~\eqref{eq:sim2} that $\delta_{\Lam_q}^{[0,2\ln q]}\wstar\mu_{Haar}$ as claimed. 
\end{proof}

\subsection{\label{subsec:Maximal_entropy}Maximal entropy}
We briefly recall the notion of entropy mainly to set the notation. 
The reader is referred to any standard textbook on the subject
for a more thorough account. See e.g.\ \cite{EinsiedlerWardEntropy, walters2000introduction}.
Recall that given a measurable space $(Y,\mathcal{B})$, a finite measurable partition
$\pp$ of $Y$ and a probability measure $\mu$ on $Y$ we define
the entropy of $\mu$ with respect to $\pp$ to be
\[
H_{\mu}\left(\pp\right)=-\sum_{P_{i}\in\pp}\mu\left(P_{i}\right)\ln\left(\mu\left(P_{i}\right)\right).
\]
We refer to the sets composing the partition $\cP$ as the \textit{atoms} of $\cP$. 
Given a $\mu$-preserving transformation $T:Y\to Y$, we define 
\begin{align*}
\forall k<\ell\in\bZ,\; \pp_k^\ell & =\bigvee_{i=k}^{\ell-1}T^{-i}\pp\\
h_{\mu}\left(T,\pp\right) & =\limfi n{\infty}\frac{1}{n}H_{\mu}\left(\pp_0^{n}\right)=\liminf_{n\geq1}\frac{1}{n}H_{\mu}\left(\pp_0^{n}\right)\\
h_{\mu}\left(T\right) & =\sup_{\left|\pp\right|<\infty}h_{\mu}\left(T,\pp\right)
\end{align*}
The following characterization of $\mu_{Haar}$ in terms of maximal entropy is the main tool
we use in the proof of Theorem~\ref{thm:mainug2}, where the map $T:X\to X$ is defined by 
$$T\left(x\right)=xa\left(1\right)=x\smallmat{e^{-t/2} & 0\\
0 & e^{t/2}}.$$
\begin{theorem}
[see \cite{ELPisa,einsiedler_distribution_2012}]\label{thm:maximalentropy} 
Let 
$\mu$ be a $T$-invariant probability measure on $X$.
Then $h_{\mu}\left(T\right)\leq h_{\mu_{Haar}}\left(T\right)=1$, and
there is an equality if and only if $\mu=\mu_{Haar}$.
\end{theorem}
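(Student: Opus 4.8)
The plan is to deduce this well-known dichotomy from two ingredients of homogeneous/smooth ergodic theory: the Ruelle entropy inequality, which gives the sharp upper bound together with the value $h_{\mu_{Haar}}(T)=1$; and the equality case of the Ledrappier--Young formula, which forces invariance under the unstable horospherical subgroup and then, via the measure classification for the horocycle flow, pins down $\mu$.

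\emph{Step 1: the upper bound, and the entropy of $\mu_{Haar}$.} Recall that $T$ is right translation by $a(1)$. A direct computation with~\eqref{eq:1518} gives $a(-t)u_sa(t)=u_{e^ts}$, so that under $T$ the subgroup $U$ is the unstable horospherical subgroup (expanded by the factor $e$), the lower-triangular unipotent subgroup is the stable one (contracted by $e^{-1}$), and $A$ is the neutral direction. The Ruelle entropy inequality in the homogeneous setting (see~\cite{ELPisa, einsiedler_distribution_2012}) then gives, for every $T$-invariant probability measure $\mu$,
\[
h_\mu(T)\le \log\bigl|\det\bigl(\on{Ad}(a(1))|_{\gou}\bigr)\bigr|=\log e=1,\qquad \gou=\lie(U).
\]
For $\mu=\mu_{Haar}$ the conditional measures along $U$-orbits are the Haar measures on $U$, and the easy direction of the Ledrappier--Young formula (equivalently, a direct Rokhlin-tower computation along the expanding $U$-direction) shows the inequality is then an equality; thus $h_{\mu_{Haar}}(T)=1$.

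\emph{Step 2: rigidity.} Suppose $h_\mu(T)=1$, so equality holds in the Ruelle inequality. The equality case of the Ledrappier--Young formula, in the form used in~\cite{ELPisa, einsiedler_distribution_2012}, says that the leafwise (conditional) measures of $\mu$ along the unstable foliation, i.e.\ along $U$-orbits, coincide, up to normalization and modification on a null set, with the Haar measures on $U$. A standard leafwise-measure argument then upgrades this to full $U$-invariance of $\mu$. Now invoke the classification of $U$-invariant probability measures on $X=\SL_2(\bZ)\backslash\SL_2(\bR)$ (Hedlund's theorem in measure-theoretic form, cf.\ Dani and Dani--Smillie): every ergodic component of $\mu$ is either $\mu_{Haar}$ or the Haar probability measure on a single periodic $U$-orbit (a closed horocycle). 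The latter possibility is incompatible with $T$-invariance, since $a(1)$ carries a periodic horocycle of length $\ell$ to one of length $e^{-1}\ell$, so iterating $T$ drives all the mass into the cusp; hence no $T$-invariant probability measure can have such an ergodic component. Therefore the ergodic decomposition of $\mu$ is supported on $\{\mu_{Haar}\}$, i.e.\ $\mu=\mu_{Haar}$.

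\emph{Main obstacle.} The crux is the equality case of the entropy formula: showing that $h_\mu(T)=1$ forces the conditional measures along $U$ to be Haar (equivalently, $U$-invariance of $\mu$). This is the genuinely technical point, requiring the Ledrappier--Young machinery adapted to the non-compact homogeneous setting, where the hypothesis that $\mu$ is a \emph{probability} measure is essential to rule out escape of mass. The remaining ingredients — the horospherical computation, the Ruelle bound, and Hedlund's classification — are comparatively routine.
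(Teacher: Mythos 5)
Your argument is correct in substance, but note that the paper does not prove this theorem at all: it is quoted from \cite{ELPisa,einsiedler_distribution_2012}, and the route taken there (and mirrored by the authors themselves in their adelic analogue, Theorems~\ref{thm:max_entropy_U_invariant} and~\ref{thm:Lifting_to_finite_is_Haar} together with Lemma~\ref{lem:unique_erg}) is different from yours at the final step. In the cited proof one shows $h_\mu(T)\le 1$ with equality if and only if $\mu$ is $U$-invariant, then applies the same statement to $T^{-1}$ (using $h_\mu(T^{-1})=h_\mu(T)$) to get invariance under the transposed unipotent $U^{tr}$ as well; since $\langle U,U^{tr}\rangle$ generates $\SL_2(\bR)$ and the Haar measure is the unique $\SL_2(\bR)$-invariant probability measure on $X$, one concludes $\mu=\mu_{Haar}$ without ever classifying $U$-invariant measures. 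You instead stop after obtaining $U$-invariance and invoke Dani's measure classification for the horocycle flow, ruling out the closed-horocycle components by $T$-invariance. Both routes are valid; the cited one is more economical (it needs only the entropy--unipotent-invariance dichotomy plus unique $G$-invariance, which is why it transfers to the $S$-adic and adelic settings in \S\ref{sec:adeles}), whereas yours trades the $T^{-1}$ step for the heavier input of horocycle measure rigidity.

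Two small points. First, with the paper's convention $a(t)=\diag{e^{-t/2},e^{t/2}}$ acting on the right, $T$ sends a closed $U$-orbit of period $\ell$ to one of period $e\ell$, not $e^{-1}\ell$; it is $T^{-1}$ that pushes such orbits into the cusp. This is harmless: either iterate $T^{-1}$, or argue directly that the period function $\ell$ satisfies $\ell\circ T=e\,\ell$ on the union of closed horocycles, so $\mu(\set{\ell<\delta})=\mu(\set{\ell<\delta/e^n})\to 0$ for every $\delta$, whence no $T$-invariant probability measure charges closed horocycles. Second, the passage from ``leafwise measures along $U$ are Haar'' to genuine $U$-invariance of $\mu$ is exactly the content of the equality case in \cite{ELPisa}, so you are not adding anything beyond the cited input there; this is fine, but it means your proof and the cited one share their technical core and differ only in the endgame described above.
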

In what follows all partitions of $X$ are implicitly assumed to be finite and measurable.
Suppose that $\delta_{\Lambda_q}^{\flr{\ln q }}\wstar\mu,\; \Lambda_q\subseteq \inv{q}$
for some sequence $q\to\infty$ and let $\pp$ be any partition of
$X$ such that the boundaries
of the atoms of $\pp$ have zero $\mu$-measure. This condition implies
that $H_{\mu}\left(\pp_0^m\right)=\limfi q{\infty}H_{\delta_{\Lambda_q}^{\flr{\ln q }}}\left(\pp_0^m\right)$.
Our goal in the end is to show that the entropy $h_{\mu}\left(T,\pp\right)$
is big for a well chosen partition $\pp$, or equivalently that $\frac{1}{m}H_{\mu}\left(\pp_0^m\right)$
is big when $m\to\infty$ which is translated to a suitable condition
on the entropy of $\delta_{\Lambda_q}^{\flr{\ln q }}$. 

Recall that for a finite set $\Lambda\subseteq\Gamma\backslash G$, the measure $\delta_{\Lambda}^{k}$
is the average of the measures $\delta_{x}^{k},\;x\in\Lambda$,
and each of these measures is an average along the $T$-orbit.
Switching the orders of these averages we get that
\begin{align*}
\delta_{\Lambda}^{k} & =\frac{1}{\left|\Lambda\right|}\sum_{x\in\Lambda}\frac{1}{k}\sum_{i=0}^{k-1}\delta_{x a\left(i\right)}
=\frac{1}{k}\sum_{i=0}^{k-1}T^{i}\left(\frac{1}{\left|\Lambda\right|}\sum_{x\in\Lambda}\delta_{x}\right)=\frac{1}{k}\sum_{i=0}^{k-1}T^{i}\left(\delta_{\Lambda}\right).
\end{align*}
The concavity of the entropy function implies that $\delta_{\Lambda}^{k}$
has large entropy if most of the entropies of $T^{i}\left(\delta_{\Lambda}\right)$
are large, and these are all pushforwards of the same measure
$\delta_{\Lambda}$. With this idea in mind we have the following
result the proof of which is inspired by the proof of the variational principle in~\cite{EinsiedlerWardEntropy}.

\begin{lemma}
\label{lem:entropy_lower_bound}Let $Y$ be any measurable space,
let $S:Y\to Y$ be some measurable function, $\pp$ a partition of $Y$ and $\mu$ a probability
measure on $Y$. We denote by $\mu^{k}=\frac{1}{k}\sum_{i=0}^{k-1}S^{i}\mu$.
Then
\begin{enumerate}
\item If $\mu=\sum_{1}^{k}a_{i}\mu_{i}$ is a convex combination of 
probability measures $\mu_{i}$, then
$
H_{\mu}\left(\pp\right)\geq\sum_{1}^{k}a_{i}H_{\mu_{i}}\left(\pp\right)
$.
\item For every $n,m\in \NN$, we have that 
\[
\frac{1}{m}H_{\mu^{n}}\left(\pp_0^m\right)\geq\frac{1}{n}H_{\mu}\left(\pp_0^{n}\right)-\frac{m}{n}\ln\left|\pp\right|
\]
\end{enumerate}
\end{lemma}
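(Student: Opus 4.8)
Part (1) is the standard concavity/subadditivity property of Shannon entropy and I would prove it directly. Write $\mu=\sum_{i=1}^k a_i\mu_i$. For each atom $P\in\pp$, the function $t\mapsto -t\ln t$ is concave on $[0,1]$, so $-\mu(P)\ln\mu(P)=\phi\bigl(\sum_i a_i\mu_i(P)\bigr)\ge \sum_i a_i\phi(\mu_i(P))=\sum_i a_i\bigl(-\mu_i(P)\ln\mu_i(P)\bigr)$ where $\phi(t)=-t\ln t$. Summing over the finitely many atoms $P\in\pp$ gives $H_\mu(\pp)\ge\sum_i a_i H_{\mu_i}(\pp)$. (If one wants the sharper statement with the entropy of the weights added one can get it, but only the stated inequality is needed.)

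Part (2) is the quantitative heart, and the plan is to mimic the proof of the variational principle. Fix $n$ and $m$. The key algebraic identity is that for $0\le i\le n-1$ one can cover the block $\pp_0^{\,i+m}=\bigvee_{j=0}^{i+m-1}S^{-j}\pp$ — or rather relate the refinements $\pp_i^{\,i+m}=S^{-i}\pp_0^m$ — so that summing $H_{S^i\mu}(\pp_0^m)$ over $i$ telescopes against $H_\mu(\pp_0^n)$ up to a controlled error. Concretely, I would use subadditivity of $H_\mu(\cdot)$ over refinements together with the identity $H_{S^i\mu}(\pp_0^m)=H_\mu(S^{-i}\pp_0^m)=H_\mu(\pp_i^{\,i+m})$, and the observation that $\pp_0^{\,n+m-1}$ is refined by $\bigvee_{i=0}^{n-1}\pp_i^{\,i+m}$ while also $H_\mu(\pp_0^n)\le H_\mu(\pp_0^{\,n+m-1})\le H_\mu(\pp_0^n)+ (m-1)\ln|\pp|$ by splitting off the last $m-1$ coordinates, each contributing at most $\ln|\pp|$. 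Combining: $\sum_{i=0}^{n-1}H_{S^i\mu}(\pp_0^m)=\sum_{i=0}^{n-1}H_\mu(\pp_i^{\,i+m})\ge H_\mu\bigl(\bigvee_{i=0}^{n-1}\pp_i^{\,i+m}\bigr)$ is the wrong direction, so instead one partitions $\{0,\dots,n-1\}$ into $m$ arithmetic progressions of step $m$, on each of which the blocks $\pp_i^{\,i+m}$ are non-overlapping and their join is a subpartition of $\pp_0^n$, giving $\sum_{i \text{ in one progression}} H_{S^i\mu}(\pp_0^m)\le H_\mu(\pp_0^n)+O(\ln|\pp|)$ — again additivity over disjoint blocks. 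Summing the $m$ progressions yields $\sum_{i=0}^{n-1}H_{S^i\mu}(\pp_0^m)\le m\,H_\mu(\pp_0^n)+ \text{(boundary terms)}$; dividing by $mn$ and using part (1) applied to $\mu^n=\frac1n\sum_{i=0}^{n-1}S^i\mu$ to get $H_{\mu^n}(\pp_0^m)\ge\frac1n\sum_{i=0}^{n-1}H_{S^i\mu}(\pp_0^m)$ — wait, that is the wrong direction too.

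Let me restate the logic cleanly: part (1) gives $H_{\mu^n}(\pp_0^m)\ge\frac1n\sum_{i=0}^{n-1}H_{S^i\mu}(\pp_0^m)$, so I need a \emph{lower} bound on $\sum_i H_{S^i\mu}(\pp_0^m)=\sum_i H_\mu(\pp_i^{\,i+m})$. For this I use superadditivity in the form $H_\mu(\pp_i^{\,i+m})\ge H_\mu(\pp_0^{\,i+m})-H_\mu(\pp_0^{\,i})\ge H_\mu(\pp_0^{i+m})-H_\mu(\pp_0^i)$, and more usefully the bound $H_\mu(\pp_0^{n})\le H_\mu(\pp_0^i)+H_\mu(\pp_i^{n})$ with $H_\mu(\pp_i^n)\le H_\mu(\pp_i^{i+m})+ (n-i-m)\ln|\pp|$ when $i+m\le n$; chaining these over a progression of step $m$ recovers $H_\mu(\pp_0^n)$ from below by $\sum H_\mu(\pp_i^{i+m})$ plus an error of at most $\tfrac nm\cdot$(nothing) — the careful bookkeeping shows $\sum_{i=0}^{n-1}H_\mu(\pp_i^{i+m})\ge m H_\mu(\pp_0^n) - m^2\ln|\pp|$ (the $m^2$ coming from $m$ progressions each losing up to $m\ln|\pp|$ at the two ends). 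Dividing by $mn$ gives exactly $\frac1m H_{\mu^n}(\pp_0^m)\ge\frac1n H_\mu(\pp_0^n)-\frac mn\ln|\pp|$. The main obstacle is purely organizational: getting the direction of every subadditivity/superadditivity step right and checking that the accumulated boundary error is genuinely bounded by $m^2\ln|\pp|$ rather than something larger; once the arithmetic-progression decomposition of $\{0,\dots,n-1\}$ is set up this is routine, and it is precisely the combinatorics appearing in the proof of the variational principle in~\cite{EinsiedlerWardEntropy}.
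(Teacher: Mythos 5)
Your plan, once the false starts are stripped away, is exactly the paper's argument: part (1) by concavity of $t\mapsto -t\ln t$ summed over the atoms, and part (2) by splitting $\{0,\dots,n-1\}$ into $m$ arithmetic progressions of step $m$, using subadditivity on each to bound $H_\mu(\pp_0^n)$ by the sum of the block entropies $H_{S^{vm+u}\mu}(\pp_0^m)$ plus at most $m\ln|\pp|$ of boundary terms, summing over the $m$ offsets, and finishing with part (1) applied to $\mu^n$. The bookkeeping you defer is precisely what the paper carries out, and your accumulated error bound $m^2\ln|\pp|$ is the one that appears there.
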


\begin{proof}

\begin{enumerate}
\item Since the function $\alpha:x\mapsto-x\ln\left(x\right)$ is concave
in $\left[0,1\right]$, we obtain that
\begin{align*}
H_{\mu}\left(\pp\right)&=\sum_{P\in\pp}\alpha\left(\mu\left(P\right)\right) 
=\sum_{P\in\pp}\alpha\left(\sum_{1}^{k}a_{i}\mu_{i}\left(P\right)\right)\\
&\geq\sum_{1}^{k}a_{i}\sum_{P\in\pp}\alpha\left(\mu_{i}\left(P\right)\right)
=\sum_{1}^{k}a_{i}H_{\mu_{i}}\left(\pp\right).
\end{align*}

\item Write $n=km+r\leq m\left(k+1\right)$ where $0\leq r<m$. Using subadditivity we get that for $0\leq u\leq m-1$ we have

\begin{eqnarray*}
H_{\mu}\left(\pp_0^{n}\right) & \leq & H_{\mu}\left(\pp_0^{km+r}\right)\\
 & \leq & \sum_{i=0}^{u-1}H_{\mu}\left(S^{-i}\pp\right)+\sum_{v=0}^{k-1}H_{\mu}(S^{-\left(vm+u\right)}\pp_{m})+\sum_{i=dm+u}^{dm+m-1}H_{\mu}(S^{-i}\pp)\\
 & \leq & m\log\left|\pp\right|+\sum_{v=0}^{k-1}H_{S^{vm+u}\mu}(\pp_0^m).
\end{eqnarray*}
Summing over $0\leq u\leq m-1$ we get that 

\begin{eqnarray*}
mH_{\mu}\left(\pp_0^{n}\right) - m^{2}\ln\left|\pp\right| & \leq & \sum_{u=0}^{m-1}\sum_{v=0}^{k-1}H_{\left(S^{vm+u}\mu\right)}(\pp_0^{m})
 \leq  \sum_{j=0}^{km-1}H_{\left(S^{j}\mu\right)}(\pp_0^{m})\\
 & \leq & \sum_{j=0}^{n-1}H_{\left(S^{j}\mu\right)}(\pp_0^{m})
  \leq  n H_{\mu^{n}}\left(\pp_0^{m}\right),
\end{eqnarray*}
where in the last step we used part (1). It then follows that \\$\frac{1}{m}H_{\mu^{n}}\left(\pp_0^{m}\right)\geq\frac{1}{n}H_{\mu}\left(\pp_0^{n}\right)-\frac{m}{n}\ln\left|\pp\right|.$
\end{enumerate}
\end{proof}

\begin{corollary}
\label{cor:m_to_q}Suppose that $\Lambda_q\subset (\bZ/q\bZ)^\times$ and  
$\delta_{\Lambda_q}^{\flr{\ln q }}\wstar \mu$ along some sequence of $q$'s for a measure $\mu$ on $X$.
Then, if $\pp$ is a partition whose atoms have
boundary of zero $\mu$-measure, then $h_{\mu}(T,\pp)\geq{\displaystyle \limsup_{q\to\infty}}\frac{1}{\flr{\ln q }}H_{\left(\delta_{\Lambda_q}\right)}(\pp_{0}^{\flr{\ln q }})$.
\end{corollary}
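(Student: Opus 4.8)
The plan is to apply part (2) of Lemma~\ref{lem:entropy_lower_bound} with $\mu$ replaced by $\delta_{\Lambda_q}$, with $S$ replaced by $T$, and with $n=\flr{\ln q}$, and then to pass to the limit first in $q$ and only afterwards in $m$. Recall from the computation just before Lemma~\ref{lem:entropy_lower_bound} that $\delta_{\Lambda_q}^{\flr{\ln q}}=\frac{1}{\flr{\ln q}}\sum_{i=0}^{\flr{\ln q}-1}T^{i}(\delta_{\Lambda_q})$, i.e.\ it is exactly the measure denoted $\mu^{n}$ in the lemma for $\mu=\delta_{\Lambda_q}$ and $n=\flr{\ln q}$. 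Hence, for every $m\in\bN$ and every $q$ in our sequence, the lemma gives
\[
\frac{1}{m}H_{\delta_{\Lambda_q}^{\flr{\ln q}}}\left(\pp_0^m\right)\ \geq\ \frac{1}{\flr{\ln q}}H_{\delta_{\Lambda_q}}\left(\pp_0^{\flr{\ln q}}\right)-\frac{m}{\flr{\ln q}}\ln\left|\pp\right|.
\]

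First I would fix $m$ and let $q\to\infty$ along the given sequence. Since $\flr{\ln q}\to\infty$ and $T(\delta_{\Lambda_q}^{\flr{\ln q}})-\delta_{\Lambda_q}^{\flr{\ln q}}=\frac{1}{\flr{\ln q}}\bigl(T^{\flr{\ln q}}(\delta_{\Lambda_q})-\delta_{\Lambda_q}\bigr)$ tends to $0$ in total variation, the limit measure $\mu$ is $T$-invariant; in particular each atom of $\pp_0^m$, being of the form $\bigcap_{i=0}^{m-1}T^{-i}P_i$ with $P_i\in\pp$ and $T$ a homeomorphism, has $\mu$-null boundary and is therefore a $\mu$-continuity set. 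As recorded in the discussion preceding Lemma~\ref{lem:entropy_lower_bound}, the weak* convergence $\delta_{\Lambda_q}^{\flr{\ln q}}\wstar\mu$ then forces $H_{\delta_{\Lambda_q}^{\flr{\ln q}}}(\pp_0^m)\to H_{\mu}(\pp_0^m)$, while $\frac{m}{\flr{\ln q}}\ln\left|\pp\right|\to0$. Taking $\limsup_{q\to\infty}$ of the displayed inequality therefore yields, for every fixed $m$,
\[
\frac{1}{m}H_{\mu}\left(\pp_0^m\right)\ \geq\ \limsup_{q\to\infty}\frac{1}{\flr{\ln q}}H_{\delta_{\Lambda_q}}\left(\pp_0^{\flr{\ln q}}\right).
\]

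Finally I would let $m\to\infty$ on the left-hand side: since $\mu$ is $T$-invariant, subadditivity of $m\mapsto H_{\mu}(\pp_0^m)$ gives $\frac{1}{m}H_{\mu}(\pp_0^m)\to h_{\mu}(T,\pp)$, and the desired bound $h_{\mu}(T,\pp)\geq\limsup_{q\to\infty}\frac{1}{\flr{\ln q}}H_{\delta_{\Lambda_q}}(\pp_0^{\flr{\ln q}})$ follows. The only step needing care is this interchange of limits: one cannot send $m$ and $q$ to infinity simultaneously because the error term $\frac{m}{\flr{\ln q}}\ln\left|\pp\right|$ is only negligible once $q$ is large relative to $m$; fixing $m$ first and exploiting the weak* convergence on the finitely many $\mu$-continuity atoms of $\pp_0^m$ resolves it. Beyond this bookkeeping there is no genuine obstacle, the substantive combinatorial estimate having already been isolated in Lemma~\ref{lem:entropy_lower_bound}.
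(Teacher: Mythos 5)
Your argument is correct and is essentially the paper's own proof: apply part (2) of Lemma~\ref{lem:entropy_lower_bound} with $n=\flr{\ln q}$, let $q\to\infty$ for fixed $m$ using the $\mu$-continuity of the atoms of $\pp_0^m$ to pass the entropies to the limit, note the error term $\frac{m}{\flr{\ln q}}\ln|\pp|$ vanishes, and only then let $m\to\infty$. The only difference is that you spell out the $T$-invariance of $\mu$ and the null-boundary property of the atoms of $\pp_0^m$, which the paper leaves implicit in the discussion preceding the lemma.
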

\begin{proof}
Follows from Lemma~\ref{lem:entropy_lower_bound} and since $\frac{m}{\flr{\ln q }}\ln\left|\pp\right|\to0$
as $q\to\infty$.
\end{proof}
By the corollary above, we are left with the problem of showing that
${\displaystyle \limsup_{q\to\infty}}\frac{1}{\flr{\ln q }}
H_{\left(\delta_{\Lambda_q}\right)}(\pp_0^{\flr{\ln q }})$
is big. Suppose that we can show that for every $S\in\pp_0^{\flr{\ln q }}$, $|S\cap \Lambda_q|\le r$
or in other words $\delta_{\Lambda_q}(S)\le \frac{r}{|\Lambda_q|}$.
This would imply
\begin{align}\label{eq:1424}
\frac{1}{\flr{\ln q}}H_{\delta_{\Lambda_q}}(\pp_0^{\flr{\ln q}}) 
& = \frac{1}{\flr{\ln q}}\sum_{S\in \pp_0^{\flr{\ln q}}} \delta_{\Lambda_q}(S)\ln \frac{1}{\delta_{\Lambda_q}(S)} \\
\nonumber & \ge \frac{1}{\flr{\ln q}}\sum_{S\in \pp_0^{\flr{\ln q}}} \delta_{\Lambda_q}(S)\ln \frac{|\Lambda_q|}{r} 
= \frac{\ln |\Lambda_q|}{\flr{\ln q}} - \frac{\ln r}{\flr{\ln q}}.
\end{align}
If $|\Lambda_q|$ is big enough and $r$ is small enough; i.e.\ $ \frac{\ln |\Lambda_q|}{\flr{\ln q}} - \frac{\ln r}{\flr{\ln q}}\to 1$, then 
we get the lower bound that we wish to establish. We will follow this line of argument with a certain complication that arises. The bound $r$ will basically come from the fact that the diameter of $S$ is small and the points of $\Lam_q$ are well separated,
 but in fact, one cannot control uniformly 
the diameter of the atoms of $\pp_0^{\flr{\ln q}}$. Lemma~\ref{lem:bowen_control} below shows that one can find a partition
for which one can do so for most
atoms.
Before stating Lemma~\ref{lem:bowen_control}
we introduce some terminology.

Recall that $X$ is naturally identified with the space of unimodular lattices in the plane. 
For a lattice $x\in X$ we define the height of $x$ to be 
$$\on{ht}(x)=\max\set{ \norm{v}^{-1}:0\ne v\in x}$$
and set $X^{\le M}=\set{ x\in X:\on{ht}(x) \le  M}$
which is compact (similarly we define $X^{<M},X^{\geq M},X^{>M}$).
Under this notation $X=\bigcup_{1}^{\infty}X^{\leq M}$
is $\sigma$-compact. 
\begin{definition}\label{def:balls}
For $H\leq \SL_{2}\left(\RR\right)$, define $B_{r}^{H}=\left\{ I+W\in H:\norm W_{\infty}<r\right\} $.
In particular for $U^{+}, U^{-}A\leq \SL_2(\bR)$ we have $B_{r}^{U^{+}}=\left\{ I+tE_{1,2}:\left|t\right|<r\right\} $
and $B_{r}^{U^{-}A}=\left\{ I+W\in SL_{2}\left(\RR\right):W_{1,2}=0,\;\left|W_{i,j}\right|<r\right\} $. We also write\\
$B_{\eta,N}=B_{\eta e^-N}^{U^+} B_\eta^{U^-A}$, $B_\eta:=B_{\eta,0}$.
\end{definition}
\begin{definition}\label{def:good partitions}
A (finite measurable) partition $\pp$ of $X$ is
called an $\left(M,\eta\right)$ partition if $\pp=\left\{ P_{0},P_{1},...,P_{n}\right\} $
where $P_{0}=X^{>M}$ and $P_{i}\subseteq x_{i}B_{\eta}$, $x_{i}\in X$
for $1\leq i\leq n$. If $\mu$ is a probability measure on $X$,
then $\pp$ is called an $\left(M,\eta,\mu\right)$ partition if in
addition $\mu\left(\partial P_{i}\right)=0$ for all $i$
\end{definition}
\begin{remark}\label{rem:good partitions}
Given a measure $\mu$ one can construct $(M,\eta,\mu)$-partitions for arbitrary large $M$ and arbitrary small
$\eta$ in abundance. To see this we note that $\mu(\partial X^{>M}) = 0$ outside a countable set of $M$'s and after 
defining $P_0=X^{>M}$ one defines the $P_i$'s by a disjointification procedure starting with a finite cover of the compact set $X^{\le M}$ by balls of arbitrarily small radius having $\mu$-null boundary. The point here being is that for a given center $x$, 
outside a countable set or radii $\mu(\partial xB_r)=0$.
\end{remark}
Lemma~\ref{lem:bowen_control} is a slight adaptation of Lemma 4.5 from \cite{einsiedler_distribution_2012}. 
For convenience, we added
the full proof in Appendix~\ref{app:Working-with-balls} (see also Remark~\ref{rem:correction}).

\begin{lemma}[Existence of good partitions~\cite{einsiedler_distribution_2012}]\label{lem:bowen_control} 
For any $M>1$ there exists some $0<\eta_{0}\left(M\right)$ such that for any $0<\eta\leq\eta_{0}\left(M\right)$ and an
$\left(M,\frac{1}{10}\eta\right)$ partition $\pp$ of $X$ the following holds:
For any $\kappa\in\left(0,1\right)$ and any $N>0$, there exists some $X'\subseteq X^{\leq M}$
such that 
\begin{enumerate}
\item $X'$ is a union of $S_{1},...,S_{l}\in \pp_0^N$;
\item Each such $S_{j}$ is contained in a union of at most $C^{\kappa N}$
many balls of the form $zB_{\eta,N}$
with $z\in S_{j}$ for some absolute constant $C$.
\item  $\mu(X')\geq1-\mu\left(X^{>M}\right)-\mu^{N}\left(X^{>M}\right)\kappa^{-1}$
for any probability measure $\mu$ on $X$ (where $\mu^N=\frac{1}{N}\sum_{n=0}^{N-1}T^n_*\mu$).
\end{enumerate}
\end{lemma}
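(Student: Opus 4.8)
We want to show that for an $(M,\tfrac1{10}\eta)$ partition $\pp$ (with $\eta$ small depending on $M$), most atoms of $\pp_0^N$ which sit inside $X^{\le M}$ can be covered by few Bowen-type balls $zB_{\eta,N}$. The proof is a standard Bowen-ball/shadowing argument for the flow $a(t)$, adapted to the fact that $\pp$ has exactly one ``bad'' atom $P_0 = X^{>M}$ and all other atoms have diameter $\le \eta$ in the $B_\eta$-sense. First I would fix notation: for $x\in X^{\le M}$ let $n_0(x),\dots$ record the itinerary of $x$ under $T=a(1)$, i.e.\ which atom $T^i x$ lands in for $0\le i<N$. Two points in the same atom of $\pp_0^N$ have the same itinerary. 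The key geometric input is that if $x,y$ lie in the same atom $P_{i}$ with $i\ge 1$, then $y = x h$ for some $h\in B_\eta$, and under the $a(1)$-renormalization the $U^+$-component of $h$ contracts by $e^{-1}$ while the $U^-A$-component expands by at most $e$. So along the portion of the orbit that stays in $X^{\le M}$ (equivalently, avoids $P_0$), the relative position stays controlled in the unstable/central directions as long as we ``reset'' using the partition at each step.

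**The main steps.**
(1) \emph{Choice of $\eta_0(M)$.} Pick $\eta_0(M)$ small enough that (a) for $x\in X^{\le M}$ the map $h\mapsto xh$ is injective on $B_{\eta_0}^{SL_2}$ and bi-Lipschitz onto its image with uniform constants, and (b) the ball $B_{\eta_0}$ is small enough that the local product structure $B_r^{U^+}\cdot B_r^{U^-A}$ behaves like a genuine neighborhood basis; these are standard facts about $X$ near the compact set $X^{\le M}$.
(2) \emph{Itinerary decomposition.} Given $\kappa\in(0,1)$ and $N$, let $X' \subseteq X^{\le M}$ be the union of those atoms $S\in\pp_0^N$ all of whose points spend at least a $(1-\kappa)$-fraction of the times $0\le i<N$ inside $X^{\le M}$ — more precisely those $S$ on which the itinerary hits $P_0$ fewer than $\kappa N$ times. (This makes sense because ``hits $P_0$'' is constant on an atom of $\pp_0^N$.) Conditions (1) and (3) of the Lemma are then bookkeeping: (1) is immediate, and for (3) one integrates the indicator of $X^{>M}$ along orbits and applies Markov's inequality to $\mu^N(X^{>M})$, exactly giving $\mu(X') \ge 1 - \mu(X^{>M}) - \mu^N(X^{>M})\kappa^{-1}$.
(3) \emph{Covering an atom by Bowen balls.} Fix such an $S$ and $z\in S$. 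The claim is $S \subseteq \bigcup zB_{\eta,N}$ over at most $C^{\kappa N}$ translates. Here is the mechanism: if $y\in S$, write $y = z h$. At each good time $i$ (when $T^i z, T^i y \in X^{\le M}$, i.e.\ both in a common atom $P_{j}$, $j\ge1$) the displacement is forced into $B_\eta$ and the $U^+$-part is being contracted; the net effect after $N$ steps is that the $U^+$-component of $h$ is pushed down to scale $\eta e^{-N}$ as required by $B_{\eta,N} = B_{\eta e^{-N}}^{U^+}B_\eta^{U^-A}$. At each bad time (when the orbit is in $P_0$), we lose control and must allow a bounded number $C$ of choices to re-anchor $y$ relative to $z$; since there are at most $\kappa N$ bad times, the total number of translates needed is $\le C^{\kappa N}$. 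Making this precise is a telescoping estimate on the product of group elements $h_i$ recording the step-by-step displacement, using the contraction/expansion rates of $\mathrm{Ad}(a(1))$ on $\mathfrak{u}^+$ and $\mathfrak{u}^-\oplus\mathfrak a$.

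**The main obstacle.**
The delicate point is Step (3): controlling how the displacement $h$ between two points of the same $\pp_0^N$-atom evolves, specifically ensuring that the $U^-A$-component does not blow up past scale $\eta$ over the good times. Naively $\mathrm{Ad}(a(1))$ \emph{expands} the $U^-A$ directions, so one cannot simply iterate; the resolution is that at every good step the partition forces the \emph{total} displacement back into $B_\eta$, so it is the partition — not the dynamics — that keeps the central/stable part bounded, and the dynamics only helps in the $U^+$ direction. One must set this up carefully so that ``same atom of $\pp_0^N$'' is genuinely used at each good time, and so that the constant $C$ absorbing each bad step is independent of $M$, $\eta$, $\kappa$, $N$ (it depends only on how many $\eta$-balls are needed to cover a single unit-size $a(1)$-thickened neighborhood, which is absolute). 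I would organize this as in \cite{einsiedler_distribution_2012}, reproduced in Appendix~\ref{app:Working-with-balls}, and simply cite the contraction estimates for $\mathrm{Ad}(a(t))$ as routine.
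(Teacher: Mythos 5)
Your proposal is correct and follows essentially the same route as the paper's proof in Appendix~\ref{app:Working-with-balls}: the same definition of $X'$ via the frequency of visits to $X^{>M}$ together with the Chebyshev/Markov bound for item (3), and the same induction over the time steps in which a good step uses the partition plus the injectivity radius on $X^{\leq M}$ to upgrade $B_{\eta,m}$ to $B_{\eta,m+1}$ at no cost in the number of balls, while each bad step multiplies the count by an absolute covering constant $C$ (Lemma~\ref{lem:Ball_cover}). The only point your sketch leaves implicit is that the re-anchoring centers must be chosen inside the atom $S_j$ itself (the content of Remark~\ref{rem:correction}), which the paper arranges by taking the centers in Lemma~\ref{lem:Ball_cover} from $Y\subseteq S$.
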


Lemma~\ref{lem:bowen_control} 
gives us the tool to produce partitions whose entropies could be controlled in the proof of Theorem~\ref{thm:mainug2}.
The last bit of information we need before turning to the proof of Theorem~\ref{thm:mainug2} is the following separation lemma.
\begin{lemma}[Good Separation]\label{lem:separation}
Let $p_{1},p_{2}\in (\ZZ/q\ZZ)^\times$.
If $\Gamma u_{p_{1}/q},\Gamma u_{p_{2}/q}\in 
zB_{\eta,\flr{\ln q }},$ for some  $\eta<\frac{1}{100}$
then $p_{1}=p_{2}$.
\end{lemma}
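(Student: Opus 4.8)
The plan is to unwind the geometric condition $\Gamma u_{p_1/q}, \Gamma u_{p_2/q} \in zB_{\eta,\flr{\ln q}}$ into a statement about the two lattices $\Lambda_i = \bZ^2 u_{p_i/q}$ (suitably scaled), and then exploit the arithmetic rigidity coming from $\gcd(p_i,q)=1$. First I would write $\Gamma u_{p_i/q} = z h_i$ with $h_i \in B_{\eta,\flr{\ln q}} = B^{U^+}_{\eta e^{-N}} B^{U^-A}_\eta$, where $N = \flr{\ln q}$; then $u_{p_1/q}$ and $u_{p_2/q}$ differ on the left by an element of $\Gamma = \SL_2(\bZ)$ and on the right by $h_1^{-1}h_2$, which lies in a product of two such small balls, hence is of the form $I + W$ with $\|W\|_\infty$ small, but with the crucial feature that the $U^+$-component is \emph{extremely} small — of size $O(\eta e^{-N}) = O(\eta/q)$ — while the $U^-A$-component is merely of size $O(\eta)$.

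The key computation is then: there exists $\gamma = \smallmat{a & b \\ c & d} \in \SL_2(\bZ)$ with
\[
u_{p_2/q} = \gamma\, u_{p_1/q}\, (I+W),
\]
equivalently $\gamma = u_{p_2/q}(I+W)^{-1}u_{-p_1/q}$. I would compute this product explicitly: conjugating $(I+W)^{-1}$ by the unipotents $u_{p_2/q}$ and $u_{-p_1/q}$ and writing out the four entries, using that $(I+W)^{-1} = I - W + W^2 - \cdots$ has $U^+$-part of size $O(\eta/q)$ and off-$U^+$ part of size $O(\eta)$. The point is that the entries of $\gamma$ must be \emph{integers}, and the $(2,1)$-entry $c$ of $\gamma$ will be forced to be small: tracking the $1/q$ and $1/q^2$ factors coming from the $u_{p_i/q}$'s against the sizes of the components of $W$, one finds $|c| < 1$ for $\eta < 1/100$, hence $c=0$. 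Once $c=0$, the integer matrix $\gamma$ is upper triangular in $\SL_2(\bZ)$, so $\gamma = \pm u_n$ for some $n \in \bZ$; feeding this back, $u_{p_2/q} = \pm u_n u_{p_1/q}(I+W)$ forces $n=0$ (again by a size estimate on the remaining entries, since $p_1/q, p_2/q \in (0,1]$ and the perturbation is small), so $\gamma = \pm I$ and $(I+W) = u_{p_1/q}^{-1}u_{p_2/q} = u_{(p_2-p_1)/q}$; but $I+W$ has $U^-A$-component within $\eta$ of the identity while being unipotent upper triangular, forcing $W=0$, i.e. $p_1 \equiv p_2$, and since both lie in $(\bZ/q\bZ)^\times$ represented in $\{1,\dots,q\}$ we get $p_1 = p_2$.

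The main obstacle I anticipate is the bookkeeping in the explicit matrix computation: one has to carefully separate the contribution of the $U^+$-ball (negligibly small, of order $\eta/q$) from that of the $U^-A$-ball (of order $\eta$) when conjugating by $u_{p_i/q}$, because conjugation by $u_{p/q}$ scales the lower-left entry by $(p/q)^2 \le 1$ but can amplify cross terms — the whole argument hinges on the fact that the dangerous amplification factor is at most $q^2$ while the $U^+$-component was shrunk by $e^{-N} \approx 1/q$ twice over (once in $B_{\eta,N}$ itself and effectively once more through the product structure), leaving a genuine gain that beats the constant $1/100$. Getting the constants to work out — verifying that $\eta < 1/100$ really does yield $|c|<1$ — is where care is needed, but it is a finite, elementary estimate; conceptually the lemma is just the statement that distinct Hecke points $x_0 u_{p/q}$ are $\gg 1/q$-separated in the $U^+$-direction while the Bowen ball $B_{\eta,N}$ has been deliberately thinned in exactly that direction.
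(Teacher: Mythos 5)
Your proposal follows essentially the same route as the paper: write $u_{-p_1/q}\,\gamma\, u_{p_2/q}=b_1^{-1}b_2\in B_{10\eta,\flr{\ln q}}$ for some $\gamma\in\SL_2(\ZZ)$, force the integer $(2,1)$-entry $c$ to vanish because it is bounded by $10\eta<1$, reduce $\gamma$ to an upper-triangular unipotent, and then use the $e^{-\flr{\ln q}}$-thinning of the $U^+$-direction to beat the $1/q$-separation of distinct points $p/q$. The only slip is in your final step, where you attribute the conclusion to the $U^-A$-component being small ("forcing $W=0$"); what actually forces $p_1=p_2$ is that the $U^+$-component of $b_1^{-1}b_2$ has size $O(\eta e^{-\flr{\ln q}})<1/q$ while $b+(p_2-p_1)/q$ is a nonzero multiple of $1/q$ unless $b=0$ and $p_1=p_2$ --- a point you state correctly both at the outset and in your closing conceptual remark.
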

\begin{proof}
Given the assumption, there exist some $b_1,b_2\in B_{\eta,\flr{\ln(q)}}$ such that $\Gamma u_{p_i/q}=zb_i$,
and hence $u_{-p_1/q}\gamma u_{p_2/q}=b_1^{-1}b_2$ for some $\gamma=\smallmat{
a & b\\
c & d
}\in \SL_{2}\left(\ZZ\right)$. Applying Lemma~\ref{lem:BALLS},
this is contained in $B_{10\eta, \flr{\ln(q)}}$. On the other hand, this expression equals to 
\begin{equation}\label{eq:1444}
\smallmat{
1 & -\frac{p_{1}}{q}\\
0 & 1
}
\smallmat{
a & b\\
c & d
}
\smallmat{
1 & \frac{p_{2}}{q}\\
0 & 1
}=
\smallmat{
a-\frac{p_{1}}{q}c & b-\frac{p_{1}}{q}d+\frac{p_{2}}{q}\left(a-\frac{p_{1}}{q}c\right)\\
c & d+\frac{p_{2}}{q}c
}
\end{equation}

We conclude that $c$, the bottom left coordinate,
is at most $10 \eta<1$ in absolute value, so that $c=0$. It then follows similarly that $a=d=1$.
We are then left with the top right coordinate which is $b+\frac{p_{2}-p_{1}}{q}$
which need to be at most $\left(1+10 \eta\right)10\eta e^{-\flr{\ln{q}}}<\frac{1}{q}$
in absolute value, so we must have that $p_{1}=p_{2}$ and we are
done.
\end{proof}
Finally, after collecting all the above information we are in a position to prove 
Theorem~\ref{thm:mainug2} (and by that complete also the proof of Theorem~\ref{thm:mainug}).
\begin{proof}[Proof of Theorem~\ref{thm:mainug2}]
It is enough to show that $\mu_{Haar}$ is the only accumulation point of $\delta_{\Lam_q}^{\flr{\ln q}}$.
Let $\mu$ be such an accumulation point, which is necessarily $T$-invariant and by assumption \eqref{ass:2} is a probability measure, and restrict attention to a sequence of $q$'s for which 
$\delta_{\Lambda_q}^{\flr{\ln q }}\wstar \mu$. 
We shall show that $h_{\mu}(T) =1$ and therefore by Theorem~\ref{thm:maximalentropy} conclude that $\mu = \mu_{Haar}$ as desired.

By Corollary \ref{cor:m_to_q}, for a partition $\pp$ whose atoms have boundary of zero $\mu$-measure we have that
\begin{equation}\label{eq:1600}
h_\mu (T,\pp)\geq \limsup_q \frac{1}{\flr{\ln q}} H_{\delta_{\Lambda_q}}(\pp_0^{\flr{\ln q }}).
\end{equation}

Let $\cP$ be an $(M,\eta,\mu)$-partition (see Definition~\ref{def:good partitions} and Remark~\ref{rem:good partitions}). 
Fix $\ka>0$ and $N=\flr{\ln(q)}$ and let $X'$ be as in Lemma~\ref{lem:bowen_control}. 
If $P\in \pp^{\flr{\ln q}}$ is such that $P\subseteq X'$, then Lemma~\ref{lem:bowen_control} implies 
that it can be covered by $C^{\kappa \flr{\ln q}}$ sets which by Lemma~\ref{lem:separation} contain 
at most one element from $\Lambda_q$ each. 
This translates to the bound $\delta_{\Lambda_q}(P)\leq \frac{1}{|\Lambda_q|}C^{\kappa\flr{\ln(q)}}$ and therefore,
\begin{align}\label{eq:1601}
\frac{1}{\flr{\ln q}}H_{(\delta_{\Lambda_q})}(\pp_0^{\flr{\ln q }})&\geq 
-\frac{1}{\flr{\ln q}}\sum_{P\subseteq X'}\delta_{\Lambda_q}(P)\ln(\delta_{\Lambda_q}(P))\\
\nonumber&\geq  -\frac{1}{\flr{\ln q}}\sum_{P\subseteq X'}\delta_{\Lambda_q}(P)\ln(\frac{1}{|\Lambda_q|}C^{\kappa\flr{\ln q}}) \\
\nonumber&=\frac{1}{\flr{\ln q}}\delta_{\Lambda_q}(X') \pa{\ln|\Lambda_q|-\kappa\flr{\ln q }\ln(C)}\\
\nonumber & \geq \pa{ 1-\delta_{\Lam_q}^{\flr{\ln q}}(X^{\ge M})\ka^{-1}} \pa{\frac{\ln|\Lambda_q|}{\ln q}- \kappa\ln C}.
\end{align}
Given $\eps>0$, using assumptions \eqref{ass:1} and \eqref{ass:2}, namely $\lim \frac{ \ln\av{\Lam_q} } {\ln{q}}=1$ and $\displaystyle{\lim_{M\to \infty}\lim_{q\to \infty}} \delta_{\Lam_q}^{\flr{\ln q}}(X^{\ge M})=0$, we see that  
we can choose $M$ to be big enough and $\ka$ to be small enough so that for all large enough $q$ the expression
on the right in~\eqref{eq:1601}
is $\ge (1-\eps)(1-\eps)$. We conclude from~\eqref{eq:1600} that 
$h_\mu(T) = \sup_{\cP} h_\mu(T,\cP)\ge 1$ which concludes the proof.

\end{proof}

\subsection{\label{subsec:No_escape_of_mass}No escape of mass}
 Our goal in this section is to prove Theorem~\ref{thm:main} by showing that the sets $\Lam_q = \inv{q}$ satisfy 
 the conditions \eqref{ass:01} and \eqref{ass:02} of Theorem~\ref{thm:mainug}.
Throughout this section we set $\Lam_q = \inv{q}$ and $\mu_q = \delta_{\Lam_q}$. 

We begin with verifying that condition~\eqref{ass:01} holds which is the content of the following lemma.  
\begin{lemma}\label{lem:ass1holds}
As $q\to\infty$, $\frac{\ln \vphi(q)}{\ln q}\to 1$.
\end{lemma}
\begin{proof}
Fix $q$ and let $p_i, i=1,\dots \om(q)$ be its prime divisors. Since 
\begin{equation}\label{eq:phiq}
\vphi(q) = q \prod_{i=1}^{\om(q)} (1-p_i^{-1})
\end{equation}
we have that 
$$\ln \vphi(q) =\ln q +  \sum_{i=1}^{\om(q)} \ln(1-p^{-1}) \ge \ln q + \sum_{i=1}^{\om(q)}\ln(1/2) = \ln q -\om(q)\ln 2.$$
We conclude that $$ 1-\frac{\om(q)}{\ln{q}}\ln 2\le \frac{\ln{\vphi(q)}}{\ln q}\le 1$$
and since it was shown  by Robin in \cite{robin_estimation_1983} that
$\omega\left(q\right)=O\left(\frac{\ln q }{\ln\ln q }\right)$ we conclude 
that $\frac{\ln \vphi(q)}{\ln q}\to 1$ as desired.
\end{proof}
Showing that condition~\eqref{ass:2} is satisfied for $\Lam_q$ is the content of Lemma~\ref{lem:no_escape} 
below. We proceed towards 
its proof by establishing several lemmas.
The following simple lemma basically says that $\Lam_q$ is equidistributed on the circle.

\begin{lemma}\label{lem:equi on the circle}
Let $q$ be some integer and $0\leq\alpha\leq1$. Then 
$$\av{ \# \set{ 1\leq \ell \leq\alpha q: \ell \in\inv{q} } -\al\vphi(q) }\leq 2^{\om(q)}$$
where $\om(q)$ is the number of distinct prime factors of $q$. 


\end{lemma}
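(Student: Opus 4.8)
The plan is to prove this by a direct inclusion--exclusion (Legendre) sieve over the prime divisors of $q$. First I would record the elementary identity: for any real $x\ge 0$,
\[
\#\set{1\le \ell\le x : \gcd(\ell,q)=1} \;=\; \sum_{d\mid q}\mu(d)\flr{x/d},
\]
where $\mu$ denotes the M\"obius function. This follows at once from the fact that $\sum_{d\mid \gcd(\ell,q)}\mu(d)$ equals $1$ when $\gcd(\ell,q)=1$ and $0$ otherwise, after summing over $1\le \ell\le x$ and using $\#\set{1\le \ell\le x : d\mid \ell}=\flr{x/d}$.

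Next I would specialize to $x=\al q$ and separate the fractional parts: writing $\flr{\al q/d}=\al q/d-\set{\al q/d}$ yields
\[
\#\set{1\le \ell\le \al q : \ell\in\inv q} \;=\; \al q\sum_{d\mid q}\frac{\mu(d)}{d}\;-\;\sum_{d\mid q}\mu(d)\set{\al q/d}.
\]
The classical identity $\sum_{d\mid q}\mu(d)/d=\prod_{p\mid q}(1-p^{-1})=\vphi(q)/q$ — which is precisely the product formula~\eqref{eq:phiq} combined with the multiplicativity of $\mu$ — identifies the first term as $\al\vphi(q)$, so the left-hand side equals $\al\vphi(q)$ plus an error of absolute value $\bigav{\sum_{d\mid q}\mu(d)\set{\al q/d}}$.

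Finally I would bound this error: since $\mu(d)=0$ unless $d$ is squarefree and each $\set{\al q/d}$ lies in $[0,1)$, the triangle inequality applied termwise gives
\[
\bigav{\sum_{d\mid q}\mu(d)\set{\al q/d}}\;\le\;\sum_{\substack{d\mid q\\ d\ \text{squarefree}}}1\;=\;2^{\om(q)},
\]
which is the asserted bound. I do not expect a genuine obstacle here: the argument is essentially a one-line computation, and its only non-bookkeeping inputs are the standard identity $\sum_{d\mid q}\mu(d)/d=\vphi(q)/q$ and the trivial estimate $0\le\set{\,\cdot\,}<1$ applied separately to each summand.
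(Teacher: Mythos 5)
Your proof is correct and is essentially the paper's argument in M\"obius-function notation: the Legendre sieve $\sum_{d\mid q}\mu(d)\flr{\al q/d}$ is exactly the paper's explicit inclusion--exclusion over the prime divisors of $q$, and your termwise bound on the fractional parts over squarefree divisors reproduces the paper's estimate $\sum_{k}\binom{\om(q)}{k}=2^{\om(q)}$. No gap.
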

\begin{proof}
Let $p$ be a prime that divides $q$ and set $U_{p}=\set{ 1\leq \ell \leq\alpha q :p|  \ell }$.
We want to find $\flr{\al q} - \av{ \cup_{p_{i}}U_{p_{i}} }$
where $p_{i}$ are the distinct primes that divide $q$.

Using inclusion exclusion we get that
\begin{eqnarray*}
\flr{\al q}-\av{ \cup_{p}U_{p} } & = & \flr{\al q }-\sum_{i}\av{ U_{p_{i}} } + \sum_{i<j}\av{ U_{p_{i}} \cap U_{p_{j}}} 
+ \cdots + (-1)^{\om(q)}\av{ \cap_{i}U_{p_{i}}} \\
 & = & \flr{\al q}-\sum_{i}\flr{\frac{\al q}{p_{i}}}+\sum_{i<j}\flr{\frac{\al q}{p_{i}p_{j}}}+\cdots+ 
 (-1)^{\om(q)}\flr{\frac{\al q}{\prod_{i}p_{i}}}.
\end{eqnarray*}
On the other hand, using~\eqref{eq:phiq}, we have that 
\[
\al\vphi(q) =\al q \prod_{1}^{\om(n)} (1-\frac{1}{p_{i}})=\al q-\sum_{i}\frac{\al q}{p_{i}}+\sum_{i<j}\frac{\al q}{p_{i}p_{j}}
+ \cdots + (-1)^{\om(q)}\frac{\al q}{\prod_{i}p_{i}}
\]
so that 
\[
\av{ \al\vphi(q) - (\flr{\al q} - \av{\cup_{p}U_{p}})} \leq \sum_{k=0}^{\om(q)}\binom{\om(q)}{k}=2^{\om(q)}.
\]
\end{proof}
The following lemma is the heart of the argument yielding the validity of condition~\eqref{ass:2} and in fact establishes 
a much stronger non-escape of mass than the one we need, namely it shows that there is no escape of mass 
for any sequence of measures of the form $a(-t_q)_*\mu_q$ where $q\to\infty$ and $t_q$ is allowed to vary almost 
without constraint in the interval $[0,\ln q]$; namely it is allowed to vary in $[0,\ln q - 2\om(q)]$. 
\begin{lemma}[No escape of mass]\label{lem:nem}
 Fix some $q\in\NN,\;M>1$ and $0\leq t\leq\ln q -2\omega\left(q\right)$.
Then
\[
\left|\left\{ p\in (\bZ/q\bZ)^\times:\Gamma u_{p/q}a\left(t\right)\in X^{\geq M}\right\} \right|\leq\frac{4}{M^{2}}\varphi\left(q\right).
\]
Equivalently, $a(-t)_*\mu_q(X^{\ge M})< \frac{4}{M^2}.$
\end{lemma}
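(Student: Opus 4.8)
The plan is to unfold the definition of height and count directly. Recall that $\on{ht}(\Gamma u_{p/q}a(t)) \ge M$ means the lattice $\bZ^2 u_{p/q}a(t)$ contains a nonzero vector of norm $\le 1/M$. Writing a general nonzero lattice vector as $(m,n)u_{p/q}a(t) = ((m)e^{-t/2},\,(mp/q + n)e^{t/2})$ for $(m,n)\in\bZ^2\setminus\{0\}$, the condition $\on{ht}\ge M$ is equivalent to the existence of such $(m,n)$ with $m^2 e^{-t} + (mp/q+n)^2 e^{t} \le 1/M^2$. In particular both coordinates are individually small: $|m| \le e^{t/2}/M$ and $|mp/q + n| \le e^{-t/2}/M$. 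I would split into the two cases $m=0$ and $m\ne 0$. If $m = 0$ then $|n|e^{t/2} \le 1/M$ forces $n=0$ (since $t\ge 0$ and $M>1$), contradicting $(m,n)\ne 0$; so we may assume $m\ne 0$, and then $1\le |m|\le e^{t/2}/M$.

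The key step is then a counting argument on the residue $p$. For a fixed nonzero $m$ with $|m|\le e^{t/2}/M$, the condition $|mp/q + n| \le e^{-t/2}/M$ for some integer $n$ says that $mp/q$ lies within $e^{-t/2}/M$ of an integer, i.e. $mp \bmod q$ lies in an interval of length $2q e^{-t/2}/M$ around $0$ (mod $q$). As $p$ ranges over $\inv q$, so does $mp$ range over a subset of $\inv q$ of the same size (multiplication by a unit is a bijection on $\inv q$; but $m$ need not be a unit mod $q$ — here I would instead note that $m p$ ranges over all residues with $\gcd(\cdot,q) \mid \gcd(m,q)$, or more simply, estimate the number of $p\in\inv q$ with $mp \bmod q$ in a short interval using Lemma~\ref{lem:equi on the circle} applied to each fiber of $p\mapsto mp$). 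Concretely, Lemma~\ref{lem:equi on the circle} gives that the number of $p\in\inv q$ with $mp\bmod q$ in an interval of relative length $\alpha$ is at most $\alpha\varphi(q) + O(2^{\om(q)})$ per fiber; summing over the at most $|m|$ fibers, the count of admissible $p$ for this $m$ is $\lesssim (2 e^{-t/2}/M)\varphi(q) + |m| 2^{\om(q)}$.

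Finally I would sum over $m$ with $1\le |m|\le e^{t/2}/M$ (and absorb the $m=0$ case, which contributes nothing). The first term contributes $\sum_{|m|\le e^{t/2}/M} (2e^{-t/2}/M)\varphi(q) \approx (2e^{t/2}/M)(2e^{-t/2}/M)\varphi(q) = (4/M^2)\varphi(q)$, which is exactly the main term we want. The second, error term contributes $\sum_{|m|\le e^{t/2}/M} |m| 2^{\om(q)} \approx (e^{t/2}/M)^2 2^{\om(q)} = e^t 2^{\om(q)}/M^2$, and the hypothesis $t \le \ln q - 2\om(q)$ guarantees $e^t 2^{\om(q)} \le q\, 2^{-\om(q)} \le q$... which is not small compared to $\varphi(q)$, so I must be more careful with the error bookkeeping: the point is that $e^t \le q e^{-2\om(q)} \le q 4^{-\om(q)}$, so $e^t 2^{\om(q)} \le q 2^{-\om(q)}$, and since $\varphi(q) \gg q 2^{-\om(q)}$ is false in general, the right move is rather to choose the threshold for $|m|$ slightly below $e^{t/2}/M$ or to carry both terms and check that the total is $\le 4\varphi(q)/M^2$ for the stated range of $t$. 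The main obstacle is precisely this tension between the error term $e^t 2^{\om(q)}$ from the discrepancy estimate and the target bound; resolving it is exactly why the hypothesis is $t\le \ln q - 2\om(q)$ rather than $t\le \ln q$, and I expect the intended argument handles the $m$-sum by a direct lattice-point count (every admissible $(m,n)$ gives a short vector, and short vectors in a fixed unimodular lattice are few) combined with Lemma~\ref{lem:equi on the circle}, keeping the constant exactly $4$.
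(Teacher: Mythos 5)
Your setup is exactly the paper's: unfold the height condition into $|m|\le e^{t/2}/M$ and $|mp/q+n|\le e^{-t/2}/M$, discard $m=0$, and for each fixed $m$ count the $p\in\inv{q}$ for which $mp \bmod q$ falls in a short interval. You also correctly identify the obstacle — the discrepancy error $2^{\om(q)}$ from Lemma~\ref{lem:equi on the circle}, summed over $m$, swamps the target bound — but you do not resolve it, and your proposed bookkeeping (``at most $|m|$ fibers'', error $|m|2^{\om(q)}$ per $m$) is not the right structure and, as you yourself compute, does not close. So there is a genuine gap at the quantitative heart of the lemma.

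The paper's resolution has two ingredients you are missing. First, the correct fiber structure: with $d_m=\gcd(q,m)$, $\tilde q=q/d_m$, $\tilde m=m/d_m$, the congruence $mp\equiv r \pmod q$ pins $p$ down modulo $\tilde q$, so the bad $p$'s for $m$ are exactly $\pi^{-1}(\tilde m^{-1}\Omega)$ where $\pi:\inv{q}\to\inv{\tilde q}$ is reduction and $\Omega=\{[a]\in\inv{\tilde q}: |a|\le \tilde q e^{-t/2}/M\}$; hence the count is $|\Omega|\cdot|\ker\pi|=|\Omega|\,\varphi(q)/\varphi(\tilde q)$, and Lemma~\ref{lem:equi on the circle} is applied \emph{modulo $\tilde q$}, giving $|\Omega|\le 2\bigl(\varphi(\tilde q)e^{-t/2}/M+2^{\om(\tilde q)}\bigr)$. (Note $m\le \sqrt{q}/M$ guarantees $\tilde q>1$ so this makes sense.) Second, and this is the step your proposal lacks entirely, one shows $2^{\om(\tilde q)}\le \varphi(\tilde q)e^{-t/2}/M$: using $\varphi(k)\ge k\,2^{-\om(k)}$ and $d_m\le m\le e^{t/2}/M$ one gets $2^{\om(\tilde q)}/\varphi(\tilde q)\le \exp\bigl(t+2\om(\tilde q)-\ln q\bigr)e^{-t/2}/M$, and the hypothesis $t\le \ln q-2\om(q)$ makes the exponential $\le 1$. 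Thus the error term is dominated by the main term \emph{for each $m$ separately}, the per-$m$ count is $\le 4\varphi(q)e^{-t/2}/M$, and the union bound over the $\le e^{t/2}/M$ values of $m$ gives exactly $4\varphi(q)/M^2$. Your closing guesses (shrinking the threshold for $|m|$, or a ``direct lattice-point count'') do not supply this; the constant $4$ comes precisely from the error term being absorbed into a doubled main term.
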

\begin{proof}
We say that $p$ is bad if $\Gamma u_{p/q}a\left(t\right)\in X_2^{\geq M}$. 
Thus, $p$ is bad
if and only if there exists a vector
\begin{eqnarray*}
v_{p}\left(m,n,t\right) & = & \left(m,n\right)\left(\begin{smallmatrix}1 & \frac{p}{q}\\
0 & 1
\end{smallmatrix}\right)\left(\begin{smallmatrix}e^{-t/2} & 0\\
0 & e^{t/2}
\end{smallmatrix}\right)=\left(me^{-t/2},\left(n+m\frac{p}{q}\right)e^{t/2}\right)\\
\end{eqnarray*}
such that
\begin{eqnarray*}
\norm{v_{p}\left(m,n,t\right)}^{2} & = & m^{2}e^{-t}+\left(n+m\frac{p}{q}\right)^{2}e^{t}\leq\frac{1}{M^{2}},\quad\left(m,n\right)\neq\left(0,0\right).
\end{eqnarray*}
In particular, this implies that $\left(n+m\frac{p}{q}\right)^{2}e^{t}\leq\frac{1}{M^{2}}$ and $m\le \frac{e^{t/2}}{M}$.
We may also assume that $m\geq0$ and in fact that $m\neq0$,
since otherwise $\left(n+m\frac{p}{q}\right)^{2}e^{t}=n^{2}e^{t}\geq n^{2}\geq1>\frac{1}{M^{2}}$
using the assumption that $t\geq0$.  Let us say that $p$ is bad for $m\in [1, \frac{e^{t/2}}{M}]$ if there exists $n$ such that 
$\left|n+m\frac{p}{q}\right|\leq\frac{1}{e^{t/2}M}$.
We will bound the number of bad $p$'s by bounding the number of 
bad $p$'s for each $m\in [1, \frac{e^{t/2}}{M}]$.

 Given such $m$ and bad $p$ we can find
$n$ such that $\left|n+m\frac{p}{q}\right|\leq\frac{1}{e^{t/2}M}$
or equivalently $\left|nq+mp\right|\leq\frac{q}{e^{t/2}M}$. Letting
$d_{m}=\gcd\left(q,m\right)$ and writing $q=\tilde{q}d_{m},\;m=\tilde{m}d_{m}$,
we get that 
\begin{equation}\label{eq:1131}
\left|\tilde{q}n+\tilde{m}p\right|\leq\frac{\tilde{q}}{e^{t/2}M}.
\end{equation}
We will bound the number of $p$'s solving~(\ref{eq:1131}) by considering its meaning in the ring $\bZ/\tilde{q}\bZ$. 
Note that $m\leq\frac{e^{t/2}}{M}\leq\frac{\sqrt{q}}{M}$
so that $\tilde{q}=\frac{q}{\left(q,m\right)}\geq M\sqrt{q}>1$. This allows us to 
consider the group $(\bZ/\tilde{q}\bZ)^\times$ and the natural surjective 
homomorphism $\pi:(\bZ/q\bZ)^\times\to (\bZ/\tilde{q}\bZ)^\times$.
Furthermore, since $\tilde{m},p\in(\bZ/\tilde{q}\bZ)^\times$ the meaning of the 
inequality~(\ref{eq:1131}) may be interpreted in $(\bZ/\tilde{q}\bZ)^\times$.
Namely, if we let 
$\Omega=\left\{ \left[a\right]\in\left(\nicefrac{\ZZ}{\tilde{q}\ZZ}\right)^{\times}:\left|a\right|\leq\frac{\tilde{q}}{e^{t/2}M}\right\} $,
then the bad $p$'s for $m$ are exactly $\pi^{-1}\left(\tilde{m}^{-1}\Omega \right)$,
hence there are at most $\left|\Omega \right|\cdot\left|\ker\left(\pi\right)\right|$
such $p$. 
Since $\pi$ is surjective we obtain that
$\left|\ker\left(\pi\right)\right|=\frac{\varphi\left(q\right)}{\varphi\left(\tilde{q}\right)}$
and by Lemma~\ref{lem:equi on the circle} we get that $\left|\Omega \right|\leq2\left(\frac{1}{e^{t/2}M}\varphi\left(\tilde{q}\right)+2^{\omega\left(\tilde{q}\right)}\right)$.

We claim that $2^{\omega\left(\tilde{q}\right)}\leq\frac{1}{e^{t/2}M}\varphi\left(\tilde{q}\right)$.
Assuming this claim, the total number of bad $p$'s (for a fixed $m$)
is at most $\left|\Omega\right|\cdot\left|\ker\left(\pi\right)\right|\leq\frac{4}{e^{t/2}M}\varphi\left(q\right)$.
Since there are $\flr{\frac{e^{t/2}}{M}}$ such $m$, a union bound
shows that the number of bad $p$ is at most $\frac{4}{e^{t/2}M}\varphi\left(q\right)\frac{e^{t/2}}{M}=\frac{4}{M^{2}}\varphi\left(q\right)$.
Thus, to complete the proof we need only to show that $\frac{2^{\omega\left(\tilde{q}\right)}}{\varphi\left(\tilde{q}\right)}\leq\frac{1}{e^{t/2}M}$.
From~\eqref{eq:phiq} it follows that for any $k$,
$\vphi(k)\ge k(\frac{1}{2})^{\omega(k)}$ and so we deduce that 
\begin{eqnarray*}
\frac{2^{\omega\left(\frac{q}{d_{m}}\right)}}{\varphi\left(\frac{q}{d_{m}}\right)} & \leq & \frac{2^{\omega\left(\frac{q}{d_{m}}\right)}}{(\frac{1}{2})^{\omega\left(\frac{q}{d_{m}}\right)}\frac{q}{d_{m}}}=\frac{4^{\omega\left(\frac{q}{d_{m}}\right)}}{q}d_{m}\leq 
\frac{e^{2\omega\left(\frac{q}{d_{m}}\right)}}{q}\frac{e^{t/2}}{M}\\
& \leq &
\exp\left(t+2\omega\left(\frac{q}{d_{m}}\right)-\ln q \right)\frac{1}{e^{t/2}M} \le \frac{1}{e^{t/2}M}
\end{eqnarray*}
where the last inequality follows from the fact that $\omega\left(q\right)\geq\omega\left(\frac{q}{d_{m}}\right)$,
and our assumption that  $ t\leq\ln q -2\omega\left(q\right)$ so that $t +2\omega\left(\frac{q}{m}\right)-\ln q \le 0$.
\end{proof}
We now conclude the validity of condition~\eqref{ass:2} by averaging the result of Lemma~\ref{lem:nem} over $t\in[0,\ln q]$.
\begin{lemma}
\label{lem:no_escape}
For any $q>1$ and any $M>1$ 
we have 
$$\mu_{q}^{\flr{\ln q }}\left(X^{<M}\right)\geq1-\left(\frac{4}{M^{2}}+O\left(\frac{1}{\ln\ln q }\right)\right).$$
\end{lemma}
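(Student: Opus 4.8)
The plan is to prove Lemma~\ref{lem:no_escape} by averaging the pointwise bound of Lemma~\ref{lem:nem} over the interval $[0,\ln q]$, handling the short boundary interval $(\ln q - 2\omega(q),\ln q]$ separately since Lemma~\ref{lem:nem} does not apply there. Recall that $\mu_q = \delta_{\Lam_q}$ with $\Lam_q = \inv{q}$, and that
\[
\mu_q^{\flr{\ln q}} = \frac{1}{\flr{\ln q}}\sum_{j=0}^{\flr{\ln q}-1} a(-j)_*\mu_q.
\]
Equivalently, one can pass through the continuous average $\mu_q^{[0,\ln q]} = \frac{1}{\ln q}\int_0^{\ln q} a(-t)_*\mu_q\,\dt$; since $X^{\ge M}$ is closed and the two averages differ only by a reshuffling within unit intervals, it is cleaner to work with whichever is more convenient and then note that $\frac{\flr{\ln q}}{\ln q}\to 1$ accounts for the difference. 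I will work with the discrete sum directly.

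First I would split the sum: write $\flr{\ln q} = N$ and let $N_0 = \flr{\ln q - 2\omega(q)}$ (which is nonnegative and indeed close to $N$ for large $q$, by Robin's bound $\omega(q) = O(\ln q/\ln\ln q)$). For each integer $j$ with $0\le j\le N_0$ we have $0\le j\le \ln q - 2\omega(q)$, so Lemma~\ref{lem:nem} gives
\[
a(-j)_*\mu_q(X^{\ge M}) \le \frac{4}{M^2}.
\]
For the remaining indices $N_0 < j < N$ I use only the trivial bound $a(-j)_*\mu_q(X^{\ge M}) \le 1$. Therefore
\[
\mu_q^{N}(X^{\ge M}) = \frac{1}{N}\sum_{j=0}^{N-1} a(-j)_*\mu_q(X^{\ge M}) \le \frac{N_0+1}{N}\cdot\frac{4}{M^2} + \frac{N - N_0 - 1}{N} \le \frac{4}{M^2} + \frac{N-N_0}{N}.
\]
Now $N - N_0 \le 2\omega(q) + 1 = O(\ln q/\ln\ln q)$ by Robin's estimate, while $N = \flr{\ln q} \gg \ln q$, so $\frac{N-N_0}{N} = O(1/\ln\ln q)$. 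This yields
\[
\mu_q^{\flr{\ln q}}(X^{\ge M}) \le \frac{4}{M^2} + O\!\left(\frac{1}{\ln\ln q}\right),
\]
and taking complements gives exactly the claimed lower bound for $\mu_q^{\flr{\ln q}}(X^{<M})$.

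There is essentially no serious obstacle here; the lemma is a bookkeeping consequence of Lemma~\ref{lem:nem}. The one point requiring a little care is the boundary interval: Lemma~\ref{lem:nem} is only valid for $t \le \ln q - 2\omega(q)$, so the argument genuinely needs the input $\omega(q) = o(\ln q/\ln\ln q)$-type bound (Robin's estimate, already invoked in Lemma~\ref{lem:ass1holds}) to guarantee that the discarded portion of the interval is negligible — this is the source of the $O(1/\ln\ln q)$ error term. One should also double check the edge case where $\ln q - 2\omega(q)$ might be small or negative for tiny $q$; but since the statement is for $q > 1$ and the error term absorbs bounded quantities (by enlarging the implied constant), and in any case $N - N_0 \le N$ always makes the inequality $\mu_q^{\flr{\ln q}}(X^{\ge M}) \le \frac{4}{M^2} + 1$ trivially true, the conclusion holds for all $q > 1$ with the stated asymptotic error. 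Finally, combined with Lemma~\ref{lem:ass1holds} (condition~\eqref{ass:01}) this verifies condition~\eqref{ass:02} of Theorem~\ref{thm:mainug} for $\Lam_q = \inv{q}$, since $\lim_{M\to\infty}\limsup_{q\to\infty}\mu_q^{\flr{\ln q}}(X^{\ge M}) = 0$, and hence completes the deduction of Theorem~\ref{thm:main}.
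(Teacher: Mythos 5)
Your proof is correct and follows essentially the same route as the paper: split the discrete average at $\ln q - 2\omega(q)$, apply Lemma~\ref{lem:nem} to the initial portion, bound the remaining $O(\omega(q))$ terms trivially by $1$, and invoke Robin's estimate $\omega(q)=O(\ln q/\ln\ln q)$ to control the error. The extra care you take with the edge cases (small $q$, the exact index count) is fine but not needed beyond what the paper does.
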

\begin{proof}
Using the previous lemma we get that
\begin{align*}
\mu_{q}^{\flr{\ln q}}\left(X^{\geq M}\right) & =  \frac{1}{\flr{\ln q}}
\sum_{k=0}^{\flr{\ln q}-1} a(-k)_*\mu_q (X^{\geq M})\\
 & \leq \frac{1}{\flr{\ln q }}
 \sum_{k=0}^{\flr{\ln q }-2\omega\left(q\right)-1} a(-k)_*\mu_q(X^{\geq M}) +\frac{2\omega\left(q\right)}{ \flr{\ln q}}
 \le \frac{4}{M^{2}}+\frac{2\omega\left(q\right)}{\flr{\ln q }}.
\end{align*}
Finally, it was shown by Robin in \cite{robin_estimation_1983} that
$\omega\left(q\right)=O\left(\frac{\ln q }{\ln\ln q }\right)$
, thus completing the proof.
\end{proof}
\begin{proof}[Proof of Theorem~\ref{thm:main}]
By Lemmas~\ref{lem:ass1holds}, \ref{lem:no_escape} the two conditions \eqref{ass:1} and \eqref{ass:2} 
of Theorem~\ref{thm:mainug} are satisfied for $\Lam_q=\inv{q}$ yielding the result.
\end{proof}

\subsection{\label{subsec:upgrade}Upgrading the main result}

Theorem~\ref{thm:main} tells us that the averages $\delta_{\Lambda_q}^{[0,2\ln(q)]}$ where $\Lambda_q= \inv{q}$ converge to the Haar measure. The ergodicity of the Haar measure allows us to automatically upgrade this result to subsets of $\inv{q}$ of positive proportion.

\begin{theorem} \label{thm:extreme_ergodic}
Let $1\geq\alpha>0$ and choose $W_{q}\subseteq \inv{q} $
such that $\left|W_{q}\right|\geq\alpha\varphi\left(q\right)$ for
every $q$. Then $\delta_{W_{q}}^{\left[0,2\ln q \right]}\overset{w^{*}}{\longrightarrow}\mu_{Haar}$.
\end{theorem}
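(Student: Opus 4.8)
The plan is to deduce this from Theorem~\ref{thm:main} by a soft ergodicity argument. Fix $\alpha>0$ and the sets $W_q\subseteq\inv{q}$ with $|W_q|\geq\alpha\varphi(q)$, and write $V_q=\inv{q}\setminus W_q$. Then $\delta_{\inv q}^{[0,2\ln q]}$ is the convex combination $\frac{|W_q|}{\varphi(q)}\delta_{W_q}^{[0,2\ln q]}+\frac{|V_q|}{\varphi(q)}\delta_{V_q}^{[0,2\ln q]}$, with the first coefficient bounded below by $\alpha$. First I would pass to an arbitrary subsequence along which all the relevant data converge: by weak-* compactness (using that $\delta_{\inv q}^{[0,2\ln q]}\wstar\mu_{Haar}$ in particular exhibits no escape of mass, so neither do the two pieces) we may assume $\frac{|W_q|}{\varphi(q)}\to\beta\in[\alpha,1]$, $\delta_{W_q}^{[0,2\ln q]}\wstar\mu$ and $\delta_{V_q}^{[0,2\ln q]}\wstar\nu$ for probability measures $\mu,\nu$ on $X$. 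It then suffices to show $\mu=\mu_{Haar}$, since the subsequence was arbitrary.

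The key point is that $\mu$ and $\nu$ are $A$-invariant: each $\delta_{W_q}^{[0,2\ln q]}$ is an average of point masses along an $A$-orbit segment of length $2\ln q\to\infty$, so for a fixed $a(s)$ the measure $a(s)_*\delta_{W_q}^{[0,2\ln q]}-\delta_{W_q}^{[0,2\ln q]}$ has total variation $O(|s|/\ln q)\to 0$; hence any weak-* limit is $a(s)$-invariant for all $s$, i.e.\ $A$-invariant. (The same soft argument shows directly that $\mu_{Haar}=\beta\mu+(1-\beta)\nu$ by passing to the limit in the convex combination above.) Now invoke that $\mu_{Haar}$ is $A$-ergodic: if $\mu_{Haar}=\beta\mu+(1-\beta)\nu$ with $\beta>0$ and $\mu$ an $A$-invariant probability measure, then $\mu$ is absolutely continuous with respect to $\mu_{Haar}$ with Radon--Nikodym derivative bounded by $1/\beta$, and this derivative is $A$-invariant, hence $\mu_{Haar}$-a.e.\ constant by ergodicity, forcing $\mu=\mu_{Haar}$. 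This completes the proof.

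The only mildly delicate point — which I expect to be the main thing to get right rather than a genuine obstacle — is justifying that there is no escape of mass for the subsequential limits $\mu$ and $\nu$, equivalently that they are genuine probability measures and not sub-probability measures; this is immediate from the fact that $\beta\mu+(1-\beta)\nu=\mu_{Haar}$ is a probability measure together with $\mu(X),\nu(X)\le 1$, which forces $\mu(X)=\nu(X)=1$. With that in hand the ergodicity step is completely routine. I should also note that exactly the same reasoning yields Corollary~\ref{cor:almost_all_converge}: if $|W_q|/\varphi(q)\to 1$ then $\beta=1$ along every subsequence, so trivially $\mu=\mu_{Haar}$, which is the content of the remark in \S\ref{subsec:upgrade} that one only uses $A$-ergodicity of $\mu_{Haar}$.
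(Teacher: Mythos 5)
Your proposal is correct and follows essentially the same route as the paper: decompose $\delta_{\inv q}^{[0,2\ln q]}$ into the $W_q$-part and its complement, pass to a subsequence where the weight and both pieces converge, use Theorem~\ref{thm:main} to get $\mu_{Haar}=\beta\mu+(1-\beta)\nu$ as a convex combination of $A$-invariant probability measures with $\beta\geq\alpha>0$, and conclude $\mu=\mu_{Haar}$ by ergodicity (extremality) of $\mu_{Haar}$. Your extra details (the soft $A$-invariance of the limits, and that no mass escapes because the convex combination equals a probability measure) are exactly the points the paper leaves implicit, argued in the standard way.
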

\begin{proof}
Let $\mu$ be an accumulation point of $\delta_{W_{q_{i}}}^{\left[0,2\ln\left(q_{i}\right)\right]}$
for some subsequence $q_{i}$ (which is necessarily $A$-invariant). Going down to a subsequence, we may
assume that $\frac{\left|W_{q_{i}}\right|}{\varphi\left(q_{i}\right)}\to\alpha_{0}\geq\alpha>0$ and $\delta_{\Lambda_{q_i} \backslash W_{q_i}}^{[0,2\ln(q_i)}\to \mu'$ converge.
We now have that
\[
\mu_{q_i^{[0,2\ln(q)]}}=\frac{\left|W_{q_{i}}\right|}{\varphi\left(q_{i}\right)}\cdot\delta_{W_{q_{i}}}^{\left[0,2\ln\left(q_{i}\right)\right]}+\frac{\left|\Lambda_{q_{i}}\backslash W_{q_{i}}\right|}{\varphi\left(q_{i}\right)}\cdot\delta_{\Lambda_{q_{i}}\backslash W_{q_{i}}}^{\left[0,2\ln\left(q_{i}\right)\right]},
\]
and taking the limit we get that 
\[
\mu_{Haar}=\alpha_{0}\mu+\left(1-\alpha_{0}\right)\mu'.
\]
This is a convex combination of $A$-invariant probability measures with positive $\alpha_0$. The ergodicity of $\mu_{Haar}$ implies that it is extreme point in the set of $A$-invariant probability measures, hence we conclude that $\mu=\mu_{Haar}$. As this is true for any convergenct subsequence of $\delta_{W_{q}}^{\left[0,2\ln q \right]}$,
we conclude that it must converge to the Haar measure.
\end{proof}

Once we have the convergence result for any positive proportion sets, we also automatically get a second upgrade
and show that almost all choices of sequence $\delta_{p_i/q_i}^{[0,\ln(q_i)]}$ converge.

\begin{proof} [Proof of Corollary~\ref{cor:almost_all_converge}]
Let $\mathcal{F}=\{f_1,f_2,...\}$ be a countable dense family of continuous functions in $C_c(X_2)$. For each $n,q\in \bN$ define 
$$W_{q,n}=\{p\in \inv{q} : \max_{1\leq i \leq n} |(\delta_{p/q}^{[0,2\ln(q)]}-\mu_{Haar})(f_i)|<\frac{1}{n}\}.$$
We claim that $\displaystyle{\lim_{q\to \infty}} \frac{|W_{q,n}|}{\varphi(q)}=1$ for any fixed $n$. Otherwise, we can find some $1\leq i\leq n$ ,$\epsilon \in \{\pm 1\}$ and $\alpha>0$ such that the set
$$V_q=\{p\in \inv{q} : \epsilon (\delta_{p/q}^{[0,2\ln(q)]}-\mu_{Haar})(f_i)\geq\frac{1}{n}\}$$
satisfies $\frac{|V_{q_j}|}{\varphi(q_j)} \geq \alpha$  for some subsequence $q_j$.
By Theorem~\ref{thm:extreme_ergodic} we obtain that $\delta_{V_{q_j}}^{[0,2\ln(q_j)]}\wstar \mu_{Haar}$, while  $\epsilon(\delta_{V_{q_j}}^{[0,2\ln(q_j)]}-\mu_{Haar})(f_i)\geq\frac{1}{n}$ for all $j$ - contradiction (note that $i,n$ are fixed).

We conclude that for any $n$ there exists $q_n$ such that for any $q\ge q_n$, 
$\frac{|W_{q,n}|}{\varphi(q)}\ge 1-1/n$. Without loss of generality we may assume that $q_n$ is strictly monotone. We then define 
for any $q$, $n_q=\max\set{n : q\ge q_n}$. It then follows that $W_q:=W_{q,n_q}$ satisfies that 
that $n_q \to \infty$ and $\frac{W_q}{\varphi(q)}\to 1$ as $q\to \infty$. We are left to show that $\delta_{p_q/q}^{[0,2\ln(q)]} \wstar \mu_{Haar}$ for any choice of seqeunce $p_q\in W_q$. By the definition of $W_q$, for any fixed $i$ we have that $\delta_{p_q/q}^{[0,2\ln(q)]}(f_i) \to \mu_{Haar}(f_i)$, 
and since $\mathcal{F}$ is dense in $C_c(X_2)$, this claim holds for any $f\in C_c(X_2)$, or in other words $\delta_{p_q/q}^{[0,2\ln(q)]} \wstar \mu_{Haar}$.
\end{proof}

\section{\label{sec:adeles}Equidistribution over the adeles}
In this section we prove Theorem~\ref{thm:main3} which is an enhancement of Theorem~\ref{thm:main}. We establish this equidistribution statement in the adelic 
space $X_\bA:=\PGL_2(\bQ)\backslash \PGL_2(\bA)$ which we refer to as the adelic extension of $X_\bR:= X_2$.



We shall start in Subsection~\ref{subsec:local_finite} with some general results about locally finite measures and their push 
forwards. In particular we shall prove a ``compactness" criterion that roughly states that
if the push forward of a sequence of locally finite measures converges to a probability measure, 
then it has a subsequence that converges to a probability measure.

In Subsection~\ref{subsec:lifting_haar} we prove that the Haar measure on $X_\bR$ has a unique lift
to an $A_\bR$-invariant measure in $X_\bA$. Finally, in Subsection~\ref{subsec:lifting_orbit} we show
that the $A_\bR$-invariant measures $\mu_q = \sum_{p\in \inv{q}} \mu_{u_{p/q}A}$ (which are the orbit measure counterparts of the measures appearing in Theorem~\ref{thm:main}), are projections of measures on $X_\bA$ which are obtained as push-forwards of 
a single orbit measure in the adelic space, and show that this sequence
converges to the Haar measure on $X_\bA$. This will lead us to the proof of Theorem~\ref{thm:main3}.

\subsection{\label{subsec:local_finite}Locally finite measures}

In this section all the spaces are locally compact second countable Hausdorff spaces.
A measure on a space $Z$ is called \textit{locally finite} if every point in $Z$ has
a neighborhood with finite measure. Since $Z$ is locally compact, this is equivalent to saying 
that every compact set has a finite measure. We denote the space of locally finite measures
by $\cM(Z)$ and the space of homothety classes of such (non-zero) measure by $\bP\cM(Z)$.
Recall that we say that $[\nu_i]\to [\nu]$ for nonzero measures $\nu_i,\nu \in \cM(Z)$ if
there exist scalars $c_i>0$ such that $c_i \mu_i\mid _K \wstar \nu\mid _K$ 
for any compact subset $K\subseteq Z$. 

Given two spaces $X,Y$ and a continuos proper
map $\pi:X\to Y$, we obtain a map $\cM(X) \to \cM(Y)$ and its homothethy counterpart $\bP\cM(X)\to \bP\cM(Y)$,
both of which we shall denote by $\pi_*$. We will be interested in lifting convergent sequences from $\bP\cM(Y)$ to $\bP \cM(X)$.
 The next theorem is a type of compactness criterion
which assures us that we can lift at least a convergent subsequence. Moreover, if we can show that
the limit measure on $Y$ has a unique preimage measure on $X$, then the convergence in $Y$ will imply a convergence in $X$.

\begin{theorem}
\label{thm:locally_finite_converge}
Let $\pi:X\to Y$ be a continuous proper map and let $\nu_i \in \cM(X)$ and $\tilde{\nu_i}=\pi_*(\nu_i) \in \cM(Y)$.
If $[\tilde{\nu}_i]\to [\tilde \nu]$ for some probability measure $\tilde{\nu}$ on $Y$, then $[\nu_{i_k}]\to [\nu]$ for 
some subsequence $i_k$ and a probability measure $\nu$ on $X$ such that $\pi_*(\nu)=\tilde{\nu}$.

If in addition $[\nu_i]$ are in a closed subset $\Omega \subseteq \bP\cM(X)$ which contains a unique preimage $[\nu]$ 
of $[\tilde{\nu}]$, then $[\nu_i] \to [\nu]$.
\end{theorem}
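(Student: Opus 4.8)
The plan is to use the defining property of convergence in $\bP\cM(Y)$ to normalize the measures $\nu_i$ suitably, extract a weak* convergent subsequence by a diagonal argument over an exhaustion of $X$ by compacta, and then check that the limit is a probability measure with the right pushforward by exploiting properness of $\pi$.

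First I would fix a nested exhaustion $K_1\subseteq K_2\subseteq\cdots$ of $X$ by compact sets with $X=\bigcup_m K_m$ and $K_m\subseteq \operatorname{int}(K_{m+1})$, and a corresponding exhaustion $L_1\subseteq L_2\subseteq\cdots$ of $Y$; since $\pi$ is proper we may arrange $\pi(K_m)\subseteq L_m$ and, enlarging if necessary, $\pi^{-1}(L_m)\subseteq K_{m+1}$. By hypothesis there are scalars $c_i>0$ with $c_i\tilde\nu_i|_L\wstar\tilde\nu|_L$ for every compact $L\subseteq Y$. Replacing $\nu_i$ by $c_i\nu_i$ (which changes nothing about $[\nu_i]$), we may assume $c_i=1$, so $\pi_*\nu_i|_L\wstar\tilde\nu|_L$. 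In particular, for each fixed $m$ the total masses $\nu_i(\pi^{-1}(L_m))=\tilde\nu_i(L_m)$ are bounded in $i$ (they converge to a finite number, $\tilde\nu(L_m)$ up to boundary issues — choose the $L_m$ so that $\tilde\nu(\partial L_m)=0$, possible since there are only countably many bad levels). Hence the restrictions $\nu_i|_{K_m}$ have uniformly bounded mass, so by weak* compactness of bounded measures on the compact set $K_m$ together with a diagonal argument over $m$, we extract a subsequence $i_k$ and a locally finite measure $\nu$ on $X$ with $\nu_{i_k}|_{K}\wstar\nu|_K$ for every compact $K$. This already gives $[\nu_{i_k}]\to[\nu]$, with the trivial normalization $c_{i_k}=1$, provided $\nu\neq 0$.

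Next I would identify $\nu$. Continuity of $\pi$ gives $\pi_*(\nu_{i_k}|_K)\wstar \pi_*(\nu|_K)$ for compact $K$; combining with $\pi_*\nu_{i_k}|_L\wstar\tilde\nu|_L$ and the properness arrangement $\pi^{-1}(L_m)\subseteq K_{m+1}$, one checks on each $L_m$ (using test functions supported in $L_m$, whose $\pi$-preimages are supported in $K_{m+1}$) that $\pi_*\nu|_{L_m}=\tilde\nu|_{L_m}$; letting $m\to\infty$ yields $\pi_*\nu=\tilde\nu$. Since $\tilde\nu$ is a probability measure, $\nu(X)=\tilde\nu(Y)=1$, so $\nu$ is a probability measure (in particular nonzero, justifying the previous paragraph). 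This is the step I expect to be the main obstacle: one must rule out loss of mass, i.e.\ show no part of the mass of $\nu_{i_k}$ ``escapes to infinity'' in $X$ — but this is exactly where properness of $\pi$ is essential, since mass escaping in $X$ would have to escape in $Y$ as well, contradicting $\pi_*\nu_i\to\tilde\nu$ with $\tilde\nu$ a probability measure. Concretely, for any $\varepsilon>0$ pick $m$ with $\tilde\nu(Y\setminus L_m)<\varepsilon$ and $\tilde\nu(\partial L_m)=0$; then for large $k$, $\nu_{i_k}(X\setminus K_{m+1})\le \nu_{i_k}(X\setminus \pi^{-1}(L_m))=\tilde\nu_{i_k}(Y\setminus L_m)<2\varepsilon$, so mass cannot leak.

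Finally, for the second assertion: suppose all $[\nu_i]$ lie in a closed set $\Omega\subseteq\bP\cM(X)$ containing a unique preimage $[\nu_0]$ of $[\tilde\nu]$. By the first part, every subsequence of $([\nu_i])$ has a further subsequence converging to some $[\nu]$ with $\pi_*\nu=\tilde\nu$ and $\nu$ a probability measure; since $\Omega$ is closed, $[\nu]\in\Omega$, so by uniqueness $[\nu]=[\nu_0]$. As every subsequence has a further subsequence converging to the same limit $[\nu_0]$, the whole sequence converges: $[\nu_i]\to[\nu_0]$. This last step is the standard subsequence-of-subsequence principle and requires only that the relevant topology be (at least sequentially) Hausdorff on the set of probability-normalized classes, which holds here.
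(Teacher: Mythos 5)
Your proposal is correct and follows essentially the same route as the paper's proof: normalize using the defining property of convergence in $\bP\cM(Y)$, get uniform mass bounds on $\pi$-preimages of compacta, extract a subsequence via Banach--Alaoglu and a diagonal argument over an exhaustion, identify $\pi_*\nu=\tilde\nu$ (hence $\nu(X)=1$), and finish the second claim with the standard subsequence-of-subsequence/contradiction argument using closedness of $\Omega$ and uniqueness of the preimage. You are somewhat more explicit than the paper about why $\pi_*\nu=\tilde\nu$ and why no mass is lost, which the paper dismisses as clear; these details are correct.
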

\begin{proof}
Multiplying $\nu_i$ by suitable scalars, we may assume that $\tilde{\nu}_i\mid _K \wstar \tilde{\nu}\mid_K$ 
 for every compact $K \subseteq Y$. 
It then follows that $\nu_{i,K}:=\nu_i \mid_ {\pi^{-1}\left(K\right)}$ are finite with uniform bound, since  
$\nu_{i,K}(X)=\tilde{\nu}_i(K)\to \tilde{\nu}(K)\leq 1$. Choose a sequence of compact sets $K_{j}\nearrow Y$ such that
 any compact $K\subseteq Y$ is contained in some $K_j$ for some $j$, which implies the same conditions on $\pi^{-1}(K_j)$. 
Applying the Banach-Alaoglu theorem, we can find a subsequence $i_k$ such that $\nu_{i_k,K_j}$ converges as $k\to \infty$ for every $j$, which implies that $\nu_{i_k}\to \nu$ for some $\nu\in \cM(X)$. Clearly, we must have that $\pi_*(\nu)=\tilde{\nu}$, and $\nu$ must be a probability measure.

The second claim now follows from the first. Indeed, suppose that $\nu_i \in  \Omega \subseteq \bP\cM(X)$ which is closed and $\nu$ is the unique preimage of $\tilde{\nu}$ in $\Omega$. If the sequence $\nu_i$ doesn't converge to $\nu$, then it there is an open neighborhood $V$ of $\nu$ and a subsequence $\nu_{i_k}\notin V$. By the first claim this sequence has
a convergent subsequence, and since $\Omega$ is closed, it must converge to $\nu\in V$ - contradiction. Thus $\nu_i$ must
converge to $\nu$.
\end{proof}

\subsection{\label{subsec:lifting_haar} Lifts of the Haar measure}

For the rest of this section we fix the following notations. For a
set $S\subseteq\mathbb{P}$, where $\mathbb{P}$ is the set of primes
in $\NN$, we write 
\begin{align*}
 G_{S} & :=\PGL_{2}(\RR)\times{\textstyle \prod_{p\in S}'}\PGL_{2}(\QQ_{p})\quad;\quad H_{S}:=\PGL_{2}\left(\RR\right)\times{\textstyle \prod_{p\in S}}\PGL_{2}(\ZZ_{p})\\
 & \ZZ\left[S^{-1}\right]:=\ZZ\left[\frac{1}{p}:p\in S\right]\quad;\quad\Gamma_{S}:=\PGL_{2}(\ZZ\left[S^{-1}\right]).
\end{align*}
where $\prod_{p}'$ denotes the restricted product with respect to
$\PGL_{2}\left(\ZZ_{p}\right)$ (which is the standard product if $S$
is finite). Note that $H_{S}\leq G_{S}$ is a subgroup in a natural
way and $\Gamma_{S}$ is embedded as a lattice in $G_{S}$ via the diagonal map
$\gamma\mapsto\left(\gamma,\gamma,...\right)$, and we shall denote $X_S:=\Gamma_S \backslash G_S$.
In case that $S=\mathbb{P}$
or $S=\emptyset$, we will sometimes use the subscript $\bA$ (resp. $\RR$)
instead, and we remark that $\ZZ\left[\PP^{-1}\right]:=\QQ$ (resp.
$\ZZ\left[\emptyset^{-1}\right]:=\ZZ$). We denote by $\mu_{S,Haar}$ the Haar probability measure on $X_S$.

We will denote by $A_S$ the full diagonal subgroup in $G_S$. Note that $A$ is still reserved to the diagonal group with positive entries, namely the matrices $\{\left(\begin{smallmatrix}
e^{-t} & 0\\
0 & 1
\end{smallmatrix}\right)\}$ considered as a subgroup of $\PGL_2(\bR)$ while $A_\bR = \{\left(\begin{smallmatrix}
\pm e^{-t} & 0\\
0 & 1
\end{smallmatrix}\right)\}$.

Fixing $S\subseteq\mathbb{P}$, it is not hard to show that $H_{S}$
acts transitively on $X_S$ by using the fact
that $\QQ_{p}=\ZZ_{p}+\ZZ\left[\frac{1}{p}\right]$, thus leading
to the identification $X_S\cong \PGL_{2}\left(\ZZ\right)\backslash H_{S}$.
This induces the natural projections
\[
\pi_{S}^{S'}:X_{S'} \cong \PGL_{2}\left(\ZZ\right)\backslash H_{S'}\to \PGL_{2}\left(\ZZ\right)\backslash H_{S}\cong X_S \quad\forall S\subseteq S'\subseteq\PP.
\]

For any $S$, we have a $\PGL_2(\bR)$-right action on $X_{S}$ (and the induced $A_\bR$-action),
which commutes with the projections above. Moreover, these projections are easily seen to be proper 
since the only noncompact part of $H_S$ is $\PGL_2(\bR)$.
Thus, we can apply the results from the previous subsection. 

We start by showing that $\mu_{\bA,Haar}$ is the unique $A_\bR$-invariant lift of $\mu_{\bR,Haar}$. 
We shall prove this claim in two step -  first by lifting to $X_S$ with $S$ finite 
by using the maximal entropy method, and then for $X_\bA$
which follows from the structure of the restricted product.

In the following, we consider the actions by $T=\left(\begin{smallmatrix}e^{-1/2} & 0\\
0 & e^{1/2}
\end{smallmatrix}\right)$ and $U=\left\{ \left(\begin{smallmatrix}1 & s\\
0 & 1
\end{smallmatrix}\right):s\in\RR\right\} $  
on the spaces $X_S$ via their images in $\PGL_2(\bR)$.

Before considering $A_\bR$-invariant measures, we show that $\PGL_2(\bR)$-invariant measure on $G_S$ are always the Haar measure, by using the fact that $\Gamma_S$ and $\PGL_2(\bR)$ generate $G_S$.

\begin{lemma}\label{lem:projecting_measures}
Let $G_{1},G_{2}$ be unimodular locally compact second countable
Hausdorff groups and $\Gamma\leq G=G_{1}\times G_{2}$ a subgroup such
that $\overline{\left\langle G_{1},\Gamma\right\rangle }=G$. Then
a left $\Gamma$ and right $G_1$-invariant locally compact measure
$\mu$ on $G$ is the (right and left) Haar measure.
\end{lemma}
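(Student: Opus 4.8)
The plan is to show that the measure $\mu$, being locally finite and invariant under a generating set of $G$, must be Haar. First I would reduce to showing that $\mu$ is right $G$-invariant (equivalently, two-sided invariant, since $G$ is unimodular): once $\mu$ is right $G_1$-invariant and right $G_2$-invariant it is right $G$-invariant, and by uniqueness of Haar measure on $G$ it equals Haar measure up to scaling; being left $\Gamma$-invariant forces nothing extra. So the heart of the matter is to upgrade right $G_1$-invariance and left $\Gamma$-invariance to full right $G$-invariance.

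The key observation is that $G_2$ embeds into $\overline{\langle G_1,\Gamma\rangle}$ in a way that lets us approximate right translation by $G_2$-elements using combinations of right $G_1$-translations and left $\Gamma$-translations. Concretely, I would argue as follows. Consider the measure $\mu$ as a functional on $C_c(G)$. For $\gamma\in\Gamma$, left translation by $\gamma$ and right translation by $G_1$ preserve $\mu$; I want to deduce right $G_2$-invariance. Write an arbitrary $g\in G$; since $\overline{\langle G_1,\Gamma\rangle}=G$, any element $g_2\in G_2$ (viewed in $G$) is a limit of words in $G_1$ and $\Gamma$. The cleanest route is to pass to the quotient: let $\nu$ be the pushforward of $\mu$ under $G\to \Gamma\backslash G$, a locally finite right $G_1$-invariant measure on $\Gamma\backslash G$. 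The condition $\overline{\langle G_1,\Gamma\rangle}=G$ says exactly that the right $G_1$-action on $\Gamma\backslash G$ is \emph{minimal} in the sense that $\Gamma\backslash G = \overline{\Gamma\backslash\Gamma G_1}$, i.e. the $G_1$-orbit of the identity coset is dense. Then a right $G_1$-invariant locally finite measure on $\Gamma\backslash G$, combined with the fact that $G_1\times G_2$ acts and $G_2$ commutes with $G_1$, must be $G_2$-invariant as well: for $g_2\in G_2$, the pushforward $(g_2)_*\nu$ is again right $G_1$-invariant (as $G_1,G_2$ commute), and one shows a right $G_1$-invariant locally finite measure on $\Gamma\backslash G$ is unique up to scaling by the density of the $G_1$-orbit, hence $(g_2)_*\nu = c(g_2)\nu$ with $c:G_2\to\bR_{>0}$ a continuous homomorphism; unimodularity of $G_2$ (or of $G$) forces $c\equiv 1$.

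More carefully, to get uniqueness of the right $G_1$-invariant locally finite measure on $\Gamma\backslash G$ up to scalars: I would lift back to $G$ and use that a left $\Gamma$- and right $G_1$-invariant locally finite measure on $G$, when disintegrated along the right $G_1$-orbits, has conditional measures that are right Haar on $G_1$; the transverse part is a measure on the space of $G_1$-orbits, but density of $\Gamma G_1$ in $G$ means there is essentially one orbit class relevant, so the transverse measure is determined up to scaling by left $\Gamma$-invariance together with the $\sigma$-finiteness. This is the standard ``ergodicity of a dense orbit'' type argument; alternatively one can invoke that $\overline{\langle G_1,\Gamma\rangle}=G$ makes any left $\Gamma$-right $G_1$-invariant measure automatically bi-$G$-quasi-invariant with trivial modular cocycle by unimodularity, hence Haar.

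The main obstacle I anticipate is the passage from ``density of the generated subgroup'' to ``uniqueness of the invariant measure'' — i.e. making rigorous that the only locally finite measure on $G$ invariant under $\Gamma$ on the left and $G_1$ on the right is Haar, without circularity. The clean way around it is to test against $C_c(G)$: given $f\in C_c(G)$ and $g\in G$, approximate the right translate $R_g f$ in the sup norm (and with uniformly controlled support) by a function of the form $R_{g_1}L_{\gamma} f$ using words $\gamma g_1$ close to $g$ in $\Gamma G_1$; continuity of $g\mapsto \int R_g f\, d\mu$ (valid because $\mu$ is locally finite and $f$ has compact support, so only finitely much mass is involved) then yields $\int R_g f\,d\mu = \int f\,d\mu$ for all $g\in\overline{\Gamma G_1}=G$. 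Hence $\mu$ is right $G$-invariant, therefore Haar, and left $\Gamma$-invariance is then automatic. I would present this $C_c$-approximation argument as the core of the proof.
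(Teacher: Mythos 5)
There is a genuine gap, and it sits exactly at the step you flag as the ``core'' of your argument. The approximation $R_g f \approx R_{g_1}L_{\gamma}f$ for $\gamma g_1$ close to $g$ does not work: $R_{g_1}L_{\gamma}f(x)=f(\gamma x g_1)$, whereas $R_{\gamma g_1}f(x)=f(x\gamma g_1)$, and these differ because $\gamma$ multiplies $x$ on the left, not the right; $x^{-1}\gamma x g_1$ is not close to $\gamma g_1$ uniformly in $x$. More structurally: the set of $g$ with $R_g\mu=\mu$ is a closed subgroup containing $G_1$, the set of $g$ with $L_g\mu=\mu$ is a closed subgroup containing $\Gamma$, and the hypothesis $\overline{\langle G_1,\Gamma\rangle}=G$ says nothing about either subgroup separately --- a left-invariance and a right-invariance cannot be ``composed'' into invariance on one side. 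Your fallback route has the same problem in disguise: the claim that a right $G_1$-invariant locally finite measure on $\Gamma\backslash G$ is unique up to scaling because one $G_1$-orbit is dense is false in general (a dense orbit does not give unique ergodicity, even for finite measures, let alone locally finite ones), and it is essentially the statement you are trying to prove.

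The missing idea --- and the one step the paper actually needs --- is to use the product structure $G=G_1\times G_2$ together with unimodularity of $G_1$ to convert \emph{right} $G_1$-invariance into \emph{left} $G_1$-invariance. Testing $\mu$ against product functions $\psi_1\otimes\psi_2$ with $\psi_i\in C_c(G_i)$ (which are dense in $C_c(G)$ by Stone--Weierstrass), the functional $\psi_1\mapsto\mu(\psi_1\otimes\psi_2)$ is a positive right-invariant functional on $C_c(G_1)$, hence a right Haar measure on $G_1$, hence left-invariant since $G_1$ is unimodular. This makes $\mu$ left $G_1$-invariant; now the \emph{left} stabilizer of $\mu$ is a closed subgroup containing both $G_1$ and $\Gamma$, so it contains $\overline{\langle G_1,\Gamma\rangle}=G$, and unimodularity of $G$ upgrades left Haar to two-sided Haar. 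Note where each unimodularity hypothesis enters --- your sketch never uses the unimodularity of $G_1$, which is a sign that the side-switching step has been skipped.
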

\begin{proof}
Consider the natural product map $C_{c}\left(G_{1}\right)\otimes C_{c}\left(G_{2}\right)\to C_{c}\left(G_{1}\times G_{2}\right)$
defined by $\left(f_{1}\otimes f_{2}\right)\left(g_{1},g_{2}\right)=f_{1}\left(g_{1}\right)f_{2}\left(g_{2}\right)$.
Using the Stone Weierstrass Theorem, we obtain that it has a dense
image (in the sup norm), hence it is enough to show that $\mu\left(R_{g}\left(\psi_{1}\otimes\psi_{2}\right)\right)=\mu\left(\psi_{1}\otimes\psi_{2}\right)$
for any $\psi_{i}\in C_{c}\left(G_{i}\right),\;i=1,2$ where $R_{g}$
(and later on $L_{g}$) is the right multiplication by $g$ (resp.
left).

If $g=g_{1}\in G_{1}$, then $\mu\left(R_{g_{1}}\left(\psi_{1}\otimes\psi_{2}\right)\right)=\mu\left(\left(R_{g_{1}}\psi_{1}\right)\otimes\psi_{2}\right)$
so by the right $G_{1}$-invariance of $\mu$ we learn that $\psi_{1}\mapsto\mu\left(\psi_{1}\otimes\psi_{2}\right)$
is right $G_{1}$-invariant. The unimodularity of $G_{1}$ implies that this map and therefore $\mu$ are left $G_1$-invariant. The set $Stab_{G}\left(\mu\right)=\left\{ g\in G\;\mid\;\mu\circ L_g = \mu\right\} $
is closed in $G$ and contains $\left\langle G_{1},\Gamma\right\rangle $,
so $\mu$ is left $G$-invariant. Finally, since $G$
is unimodular we conclude that $\mu$ is right $G$-invariant as well,
i.e. it is a Haar measure.
\end{proof}
\begin{lemma}[Unique ergodicity]\label{lem:unique_erg}
Let $S\subseteq\PP$ be finite and let $\mu_{S}$ be a $\PGL_2(\bR)$-invariant probability measure
on $X_S$. Then $\mu_S$ must be the Haar measure.
\end{lemma}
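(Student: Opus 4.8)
The plan is to derive the statement directly from Lemma~\ref{lem:projecting_measures}. Decompose $G_S = G_1\times G_2$ with $G_1=\PGL_2(\RR)$ and $G_2=\prod'_{p\in S}\PGL_2(\QQ_p)$ (for finite $S$ this is an honest finite product); all three groups are unimodular. First I would lift $\mu_S$ from $X_S=\Gamma_S\backslash G_S$ to $G_S$: since $\Gamma_S$ is discrete, the quotient map $\rho\colon G_S\to X_S$ is a covering, and the pull-back measure $\tilde\mu$ (the locally finite measure which on a fundamental domain is the sum of the $\Gamma_S$-translates of $\mu_S$) is left $\Gamma_S$-invariant with $\rho_*\tilde\mu=\mu_S$. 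Because $\mu_S$ is right $G_1$-invariant and right translation commutes with $\rho$, the lift $\tilde\mu$ is right $G_1$-invariant as well.

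Next I would verify the generation hypothesis of Lemma~\ref{lem:projecting_measures}, namely $\overline{\langle G_1\times\{e\},\ \Delta(\Gamma_S)\rangle}=G_S$. Since $G_1\times\{e\}$ already lies in this closure, the closed subgroup it generates is of the form $G_1\times\overline{\pr_2(\Gamma_S)}$, so the hypothesis reduces to density of the image of $\Gamma_S=\PGL_2(\ZZ[S^{-1}])$ inside $G_2$. This is a strong-approximation statement: the $\SL_2$ strong approximation theorem gives that $\SL_2(\ZZ[S^{-1}])$ is dense in $\prod_{p\in S}\SL_2(\QQ_p)$, and one then passes to $\PGL_2$. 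Granting this, Lemma~\ref{lem:projecting_measures} shows that $\tilde\mu$ is proportional to the Haar measure of $G_S$; pushing forward by $\rho$, the measure $\mu_S$ is proportional to $\mu_{S,Haar}$, and since both are probability measures $\mu_S=\mu_{S,Haar}$. (One can equally run this in the picture $X_S\cong\PGL_2(\ZZ)\backslash H_S$ with the compact factor $\prod_{p\in S}\PGL_2(\ZZ_p)$ playing the role of $G_2$; the needed density of $\PGL_2(\ZZ)$ then reduces to surjectivity of $\SL_2(\ZZ)\to\SL_2(\ZZ/N)$.)

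I expect the generation hypothesis to be the only real point, and in fact the delicate part of it: the passage from $\SL_2$ to $\PGL_2$ requires controlling the component group $\PGL_2(\QQ_p)/\PSL_2(\QQ_p)\cong\QQ_p^\times/(\QQ_p^\times)^2$ for each $p\in S$ (with $p=2$ being the most delicate), i.e.\ checking that $\PGL_2(\ZZ[S^{-1}])$ meets every local square class of determinants simultaneously. Everything else — the lifting step, the unimodularity checks, and the final push-forward — is routine once Lemma~\ref{lem:projecting_measures} is available.
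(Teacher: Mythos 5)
Your route is exactly the paper's: lift $\mu_S$ to a left $\Gamma_S$-invariant, right $\PGL_2(\RR)$-invariant locally finite measure on $G_S$ and invoke Lemma~\ref{lem:projecting_measures}, so everything rests on the generation hypothesis $\overline{\langle \PGL_2(\RR),\Delta(\Gamma_S)\rangle}=G_S$, i.e.\ on the density of $\PGL_2(\ZZ[S^{-1}])$ in $\prod_{p\in S}\PGL_2(\QQ_p)$. You correctly isolate this as the one delicate point and even name the obstruction --- whether the determinants of $\GL_2(\ZZ[S^{-1}])$ hit every class in $\prod_{p\in S}\QQ_p^\times/(\QQ_p^\times)^2$ --- but you then grant it, and in fact it fails in general, so this is a genuine gap rather than a routine verification. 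Strong approximation only places $\prod_{p\in S}\PSL_2(\QQ_p)$ in the closure, and the quotient $\PGL_2(\QQ_p)/\PSL_2(\QQ_p)\cong\QQ_p^\times/(\QQ_p^\times)^2$ is detected by $\det$; the determinants available from $\GL_2(\ZZ[S^{-1}])$ are exactly the $S$-units $\pm\prod_{p\in S}p^{n_p}$, which occupy at most $2^{|S|+1}$ of the $\geq 4^{|S|}$ local square classes. So density fails whenever $|S|\geq 2$, and already for $S=\{p\}$ with $p\equiv 1\pmod 4$: there $-1\in(\QQ_p^\times)^2$, so the image of $\pm p^{\ZZ}$ in $\QQ_p^\times/(\QQ_p^\times)^2\cong(\ZZ/2\ZZ)^2$ is only $\langle p\rangle$, and $\overline{\PGL_2(\ZZ[1/p])}$ is the open index-two subgroup $\det^{-1}\bigl(\langle p\rangle(\QQ_p^\times)^2\bigr)$. (The paper's own appeal to ``weak approximation of $\ZZ[S^{-1}]$'' suffers from the same defect: it only supplies the unipotent, hence the $\PSL_2$, part.)

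Moreover the defect is in the statement, not merely in the proof: the homomorphism $(g_\infty,(g_p)_{p\in S})\mapsto(\det g_p)_{p}$ is continuous on $G_S$, kills the real factor, and sends $\Delta(\Gamma_S)$ into the image $D$ of the $S$-units, so it descends to a continuous, right-$\PGL_2(\RR)$-invariant surjection $X_S\to \bigl(\prod_{p\in S}\QQ_p^\times/(\QQ_p^\times)^2\bigr)/D$. When that finite quotient is nontrivial (e.g.\ $S=\{5\}$), its fibers are disjoint nonempty open invariant sets, and the normalized restriction of Haar to one of them is a second $\PGL_2(\RR)$-invariant probability measure. To make the lemma true one must either add invariance under $\prod_{p\in S}\PGL_2(\ZZ_p)$ (whose determinants supply the missing unit square classes, so that together with the $S$-units one does exhaust $\prod_{p\in S}\QQ_p^\times/(\QQ_p^\times)^2$), or reformulate the whole setup with $\SL_2/\PSL_2$, where strong approximation closes the argument exactly as you outline.
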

\begin{proof}
We will show that $\PGL_2(\bR)$ invariance together with the quotient by $\Gamma_S$ from the left in $\Gamma_S \backslash G_S$, implies that $\mu_S$ must be $G_S$ invariant.

Let $\tilde{\mu}_{S}$ be the lift of $\mu_{S}$ to $G_{S}$, i.e.
for sets $F$ inside the fundamenal domain we set $\tilde{\mu}_{S}\left(F\right)=\mu_{S}\left(\Gamma_{S}F\right)$,
and extend this to a left $\Gamma_S$-invariant measure on $G_{S}$. 
The measure $\tilde{\mu}_S$ is left $\Gamma_S$ and right $\PGL_2(\bR)$-invariant measure and using the weak approximation of $\ZZ\left[S^{-1}\right]$ in $\prod_{p\in S}\QQ_{p}$ we get that $\overline{<\Gamma_S,\PGL_2(\bR)>}=G_S$. Applying Lemma~\ref{lem:projecting_measures}, we obtain that it is the Haar measure on $G_S$, hence $\mu_S$ is right $G_S$-invariant which completes the proof.
\end{proof}
Next, we would like to show that $A_\bR$-invariance implies $\PGL_2(\bR)$-invariance.

\begin{theorem}[see Theorems 7.6 and 7.9 in \cite{ELPisa}]
\label{thm:max_entropy_U_invariant}Fix some finite set $S\subseteq\PP$
and let $\lambda$ be a $T$-invariant probability
measure on $X_S$. Then $h_{\lambda}\left(T\right)\leq1$
with equality if and only if $\lambda$ is $U$-invariant. Similarly,
$h_{\lambda}\left(T^{-1}\right)\leq1$ with equality if and only if
$\lambda$ is $U^{tr}$-invariant (where $U^{tr}$ is the transpose
of $U$).
\end{theorem}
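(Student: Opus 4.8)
The plan is to deduce this from the general entropy theory for diagonalizable elements acting on locally homogeneous spaces, following \cite{ELPisa}; here I sketch the structure. First I would record the horospherical geometry of $T$ on $X_S$. The element $a(1)=\smallmat{e^{-1/2} & 0\\0 & e^{1/2}}$ normalizes $U$ and its transpose $U^{tr}$, and one computes directly that $a(1)^{-1}u_s a(1)=u_{es}$ and $a(1)\bar u_s a(1)^{-1}=\bar u_{es}$, where $\bar u_s:=\smallmat{1 & 0\\ s & 1}$; moreover $a(1)$ lies in the $\PGL_2(\RR)$ factor of $G_S$, so it commutes with every $p$-adic factor $\PGL_2(\QQ_p)$. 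Consequently, for $T:x\mapsto xa(1)$ the unstable horospherical subgroup of $T$ on $X_S$ is exactly $U$ --- its orbits are uniformly expanded by $T$ with constant unstable Jacobian $e$ --- while the stable horospherical subgroup is exactly $U^{tr}$; in particular the compact $p$-adic places contribute neither expansion nor contraction.

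Given this, the inequality $h_\lambda(T)\le1$ is the standard estimate of the entropy of $T$ by the logarithm of its unstable Jacobian, namely $\log e=1$. Concretely, I would fix a finite partition $\cP$ of $X_S$ into small sets with $\lambda$-null boundaries and note that, up to negligible error, the atoms of $\cP_0^n=\bigvee_{i=0}^{n-1}T^{-i}\cP$ are contained in Bowen $n$-pieces whose extent in the $U$-direction is $O(e^{-n})$; a covering / Shannon--McMillan--Breiman argument then bounds $\tfrac1n H_\lambda(\cP_0^n)$ above by the exponential growth rate of unstable volume, which is $1$. Alternatively one invokes the general homogeneous-space bound directly.

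The real content is the equality case, where I would use leafwise measures. Write $\mu_x^U$ for the leafwise (conditional) measures of $\lambda$ along $U$-orbits, defined for $\lambda$-a.e.\ $x$ up to a positive scalar. Since $U$ is the full unstable horospherical subgroup of $T$, the entropy of $T$ coincides with the entropy contribution of $U$, and the Einsiedler--Lindenstrauss formula for that contribution expresses $h_\lambda(T)$ as the integral of a nonnegative quantity measuring the failure of $\mu_x^U$ to be proportional to Haar measure $m_U$ on $U$; this integral is at most $\log e=1$, with equality exactly when $\mu_x^U\in\RR_{>0}\cdot m_U$ for $\lambda$-a.e.\ $x$. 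Since $\lambda$ is $U$-invariant if and only if its $U$-leafwise measures are a.e.\ Haar, we conclude that $h_\lambda(T)=1$ iff $\lambda$ is $U$-invariant. The assertion for $T^{-1}$ follows verbatim after interchanging $U$ and $U^{tr}$, because $U^{tr}$ is the unstable horospherical subgroup of $T^{-1}$ and has the same Jacobian $e$.

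The hard part is not any computation special to our setting but the leafwise-measure machinery itself --- the entropy-contribution formula and the characterization of $U$-invariance through leafwise measures --- which is precisely what is being cited. A more self-contained route in this rank-one case would build partitions adapted to the weak-unstable foliation $U^{tr}A$ and estimate directly, but the dichotomy at the heart of the equality case (the unstable leafwise measure is trivial or Lebesgue, the latter forcing $U$-invariance) would remain the essential input.
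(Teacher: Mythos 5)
The paper gives no proof of this statement: it is imported verbatim from Theorems 7.6 and 7.9 of \cite{ELPisa}, and your sketch is a faithful outline of exactly the argument those theorems encapsulate --- the identification of $U$ and $U^{tr}$ as the unstable and stable horospherical subgroups of $T$ on $X_S$ (including the observation that the $p$-adic factors are isometric under conjugation by $a(1)$, so the entropy bound remains $\log e=1$), the Jacobian upper bound, and the leafwise-measure characterization of the equality case. Your proposal is therefore correct and takes the same route as the cited source; as you note yourself, the entropy-contribution formula and the leafwise-measure dichotomy are precisely the inputs being black-boxed by the citation.
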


\begin{theorem}
\label{thm:Lifting_to_finite_is_Haar}Let $S\subseteq\PP$ be finite
and let $\mu_{S}$ be an $A_\bR$-invariant probability measure on $X_S$, such that
\[
\left(X_S,\mu_{S},T\right)\overset{\pi_{\RR}^{S}}{\longrightarrow}(X_{\bR},\mu_{\bR,Haar},T)
\]
is a factor map. Then
$\mu_{S}=\mu_{S,Haar}$.
\end{theorem}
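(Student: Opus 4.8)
The plan is to reduce everything to Lemma~\ref{lem:unique_erg}: once I know that $\mu_S$ is $\PGL_2(\bR)$-invariant, that lemma immediately gives $\mu_S=\mu_{S,Haar}$. So the entire task is to upgrade the hypothesized $A_\bR$-invariance of $\mu_S$ to full $\PGL_2(\bR)$-invariance, and I would do this by additionally showing that $\mu_S$ is invariant under the unipotent subgroups $U$ and $U^{tr}$. The group-theoretic input is that $U$ and $U^{tr}$ generate $\PSL_2(\bR)$, the identity component of $\PGL_2(\bR)$, while $A_\bR$ contains a representative of the nontrivial component (the class of $\smallmat{-1 & 0\\ 0 & 1}$); hence $\langle U,U^{tr},A_\bR\rangle=\PGL_2(\bR)$. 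Since $\on{Stab}_{\PGL_2(\bR)}(\mu_S)$ is a closed subgroup, invariance under these three subgroups forces invariance under all of $\PGL_2(\bR)$.

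To obtain $U$-invariance I would play the two available entropy statements about the single transformation $T$ against each other. On one hand, $\pi_\RR^S\colon(X_S,\mu_S,T)\to(X_\bR,\mu_{\bR,Haar},T)$ is a factor map, and Kolmogorov--Sinai entropy does not increase under factor maps, so $h_{\mu_S}(T)\ge h_{\mu_{\bR,Haar}}(T)=1$, the last equality being Theorem~\ref{thm:maximalentropy}. On the other hand, Theorem~\ref{thm:max_entropy_U_invariant} supplies the reverse inequality $h_{\mu_S}(T)\le 1$, with equality precisely when $\mu_S$ is $U$-invariant. Hence $h_{\mu_S}(T)=1$ and $\mu_S$ is $U$-invariant. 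For $U^{tr}$-invariance I would repeat the argument verbatim with $T$ replaced by $T^{-1}$: the map $\pi_\RR^S$ is $T$-equivariant, hence also $T^{-1}$-equivariant, so it remains a factor map after this replacement, and since $h_\mu(T^{-1})=h_\mu(T)$ for any invertible measure-preserving system we get $h_{\mu_S}(T^{-1})\ge h_{\mu_{\bR,Haar}}(T^{-1})=1$; the second assertion of Theorem~\ref{thm:max_entropy_U_invariant} then forces $\mu_S$ to be $U^{tr}$-invariant. Combining with the hypothesis that $\mu_S$ is $A_\bR$-invariant, the first paragraph applies and Lemma~\ref{lem:unique_erg} yields the claim.

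I do not expect a genuine obstacle here: each step is either a hypothesis (the factor map, the $A_\bR$-invariance), an already-quoted result (Theorems~\ref{thm:maximalentropy} and \ref{thm:max_entropy_U_invariant}, Lemma~\ref{lem:unique_erg}), or the standard monotonicity of entropy under factor maps together with the identity $h_\mu(T)=h_\mu(T^{-1})$. The only points requiring a sentence of care are that factor-entropy monotonicity (so that the $1$ coming from the Haar factor is transferred up to $\mu_S$) and the elementary fact that $U$, $U^{tr}$ and $A_\bR$ generate $\PGL_2(\bR)$ rather than merely $\PSL_2(\bR)$; beyond that it is just bookkeeping of which subgroups have been shown to preserve $\mu_S$.
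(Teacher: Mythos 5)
Your proof is correct and follows the same route as the paper: use monotonicity of entropy under factor maps together with Theorem~\ref{thm:max_entropy_U_invariant} to get $h_{\mu_S}(T)=1$ and hence $U$-invariance, repeat with $T^{-1}$ for $U^{tr}$-invariance, conclude $\langle U,U^{tr},A_\bR\rangle=\PGL_2(\bR)$-invariance, and finish with Lemma~\ref{lem:unique_erg}. The extra care you take about the non-identity component of $\PGL_2(\bR)$ being reached via $A_\bR$ is a point the paper leaves implicit, but there is no substantive difference.
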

\begin{proof}
Since the entropy only decreases in a factor and the Haar measure
 is $U$-invariant, an application of Theorem~\ref{thm:max_entropy_U_invariant}
shows that $1\geq h_{\mu_{S}}\left(T\right)\geq h_{\mu_{\bR,Haar}}\left(T\right)=1$.
It follows that $h_{\mu_{S}}(T)=1$, and hence $\mu_{S}$
is also $U$ invariant. Repeating the process with $T^{-1}$,
we get that $\mu_{S}$ is $\left\langle U,U^{tr}, A_\bR \right\rangle= \PGL_2(\bR)$ invariant. The theorem now follows from Lemma~\ref{lem:unique_erg}.
\end{proof}
\begin{theorem}\label{thm:lift_Haar_adeles}Let $\mu_{\bA}$ be an $A_\bR$-invariant
probability measure on $X_{\bA}$ such that
$(X_{\bA},\mu_{\bA},T)\overset{\pi_{\RR}^{\bA}}{\to}(X_{\bR},\mu_{\bR,Haar},T)$
is a factor. Then
$\mu_{\bA}=\mu_{\bA,Haar}$.
\end{theorem}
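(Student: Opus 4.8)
The plan is to bootstrap from the finite-level statement already established in Theorem~\ref{thm:Lifting_to_finite_is_Haar}, using that $X_{\bA}$ is the inverse limit of the spaces $X_S$ as $S$ ranges over the finite subsets of $\PP$, so that a Borel probability measure on $X_{\bA}$ is determined by its pushforwards to all the $X_S$.

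First I would fix a finite $S\subseteq\PP$ and set $\mu_S:=(\pi_S^{\bA})_*\mu_{\bA}$. Since $\pi_S^{\bA}$ is equivariant for the right $\PGL_2(\RR)$-action (hence for $A_\bR$ and for $T$), the measure $\mu_S$ is an $A_\bR$-invariant probability measure on $X_S$ and $\pi_{\RR}^{S}$ intertwines the $T$-actions on $X_S$ and $X_{\bR}$. From $\pi_{\RR}^{S}\circ\pi_S^{\bA}=\pi_{\RR}^{\bA}$ together with the hypothesis that $\pi_{\RR}^{\bA}$ presents $(X_{\bR},\mu_{\bR,Haar},T)$ as a factor of $(X_{\bA},\mu_{\bA},T)$, one gets $(\pi_{\RR}^{S})_*\mu_S=(\pi_{\RR}^{\bA})_*\mu_{\bA}=\mu_{\bR,Haar}$, so that $(X_S,\mu_S,T)\overset{\pi_{\RR}^{S}}{\longrightarrow}(X_{\bR},\mu_{\bR,Haar},T)$ is a factor map; Theorem~\ref{thm:Lifting_to_finite_is_Haar} then forces $(\pi_S^{\bA})_*\mu_{\bA}=\mu_{S,Haar}$. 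On the other hand $(\pi_S^{\bA})_*\mu_{\bA,Haar}$ is a $\PGL_2(\RR)$-invariant probability measure on $X_S$, hence equals $\mu_{S,Haar}$ by Lemma~\ref{lem:unique_erg}. Thus $\mu_{\bA}$ and $\mu_{\bA,Haar}$ have identical images under every $\pi_S^{\bA}$ with $S\subseteq\PP$ finite.

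Finally I would promote this to equality on $X_{\bA}$ itself. Under the identification $X_{\bA}\cong\PGL_2(\ZZ)\backslash H_{\bA}$ with $H_{\bA}=\PGL_2(\RR)\times\prod_p\PGL_2(\ZZ_p)$, the map $\pi_S^{\bA}$ simply forgets the compact coordinates $\PGL_2(\ZZ_p)$, $p\notin S$; since $X_{\bA}$ is second countable and each $\PGL_2(\ZZ_p)$ is compact, the Borel $\sigma$-algebra of $X_{\bA}$ is generated by the cylinder sets $(\pi_S^{\bA})^{-1}(B)$ with $S\subseteq\PP$ finite and $B\subseteq X_S$ Borel, i.e.\ $X_{\bA}=\varprojlim_S X_S$ as measurable spaces. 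These cylinder sets form an algebra (two of them may be viewed as cylinders over the union of the two finite index sets via the maps $\pi_S^{S'}$), and on each of them the two measures agree by the previous paragraph. By Dynkin's $\pi$--$\lambda$ theorem, two probability measures that agree on a generating algebra coincide, so $\mu_{\bA}=\mu_{\bA,Haar}$. The only genuinely non-formal input here is Theorem~\ref{thm:Lifting_to_finite_is_Haar}; the remaining steps are bookkeeping with the inverse-limit structure, and the one point that must be handled with some care is precisely the claim that the cylinder sets generate the full Borel $\sigma$-algebra of $X_{\bA}$, which relies on the compactness of the $p$-adic factors.
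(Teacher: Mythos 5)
Your proposal is correct, and its key input is the same as in the paper, namely Theorem~\ref{thm:Lifting_to_finite_is_Haar} applied to the finite-level pushforwards $\mu_S=(\pi_S^{\bA})_*\mu_{\bA}$; the difference lies in how you pass from the finite levels back to $X_{\bA}$. The paper works with functions rather than sets: it observes that the pullbacks of $C_c(X_S)$, over all finite $S$, span a dense subspace of $C_c(X_{\bA})$, reduces the invariance of $\mu_{\bA}$ under a given $g\in G_{\bA}$ to invariance under $g\in G_S$ acting on such pullbacks, and concludes that $\mu_{\bA}$ is $G_{\bA}$-invariant (hence Haar) because each $\mu_S$ is Haar. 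You instead identify $(\pi_S^{\bA})_*\mu_{\bA}=(\pi_S^{\bA})_*\mu_{\bA,Haar}=\mu_{S,Haar}$ for every finite $S$ and then invoke the inverse-limit structure $X_{\bA}\cong\varprojlim_S X_S$ together with a $\pi$--$\lambda$ argument on cylinder sets. Your route avoids any appeal to $G_{\bA}$-invariance and to uniqueness of the adelic Haar probability measure, at the cost of the measure-theoretic bookkeeping you flag: one must check that the cylinder sets really generate the Borel $\sigma$-algebra, i.e.\ that the maps $\pi_S^{\bA}$ jointly separate points of $X_{\bA}$. This is where the identification $X_{\bA}\cong\PGL_2(\ZZ)\backslash H_{\bA}$ (quotient by the fixed discrete group $\PGL_2(\ZZ)$, with compact factors $\PGL_2(\ZZ_p)$) does the work: if $\Ga h$ and $\Ga h'$ agree in every $X_S$, the real coordinate forces all the witnessing elements $\gamma_S\in\PGL_2(\ZZ)$ to be the single element $h'_\infty h_\infty^{-1}$, whence $h'=\gamma h$ globally; together with compactness of the fibers this gives both injectivity and surjectivity of $X_{\bA}\to\varprojlim_S X_S$. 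Since you name exactly this point as the one needing care and it is indeed true, I see no gap; the two arguments are essentially equivalent in content, with the paper's being shorter because the functional-analytic density statement packages the same inverse-limit fact.
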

\begin{proof}
For each finite $S\subseteq \bP$ we can pull back the functions in $C_c(X_S)$ to $C_c(X_\bA)$ and the union of
these sets over $S$ spans a dense subset of $C_c(X_\bA)$. Hence, it is enough to prove that for any such set $S$, $f\in C_c(X_S)$ and $g\in G_\bA$ we have that $\mu_\bA(g(f\circ \pi^\bA_S))=\mu_\bA(f\circ \pi^\bA_S)$. The function
$f\circ \pi^\bA_S$ is already invariant under $g\in G_\bA$ which are the identity in the $S\cup\{\infty\}$ places, so it is enough to prove this for $g\in G_S$, and then $g(f\circ \pi^\bA_S)=g(f)\circ \pi^\bA_S$. The proof is completed by noting that the measure $\mu_S=(\pi_S^\bA)_*(\mu_\bA)$ satisfies the conditions of Theorem~\ref{thm:Lifting_to_finite_is_Haar} so it is the Haar measure on $X_S$ and hence invariant under $g$.

\end{proof}

\begin{corollary}\label{cor:real_to_adele}
Let $\nu_i\in \cM(X_\bA)$ be $A_\bR$-invariant measures and set 
$\tilde{\nu}_i = (\pi^\bA_\bR)_*(\nu_i)\in \cM(X_\bR)$ which are 
also $A_\bR$-invariant. Then $[\tilde{\nu_i}]\to [\mu_{\bR,Haar}]$ if 
and only if $[\nu_i]\to [\mu_{\bA,Haar}]$.
\end{corollary}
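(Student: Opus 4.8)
The plan is to deduce this from the abstract lifting criterion Theorem~\ref{thm:locally_finite_converge}, applied to the continuous proper projection $\pi := \pi^\bA_\bR : X_\bA \to X_\bR$, together with the rigidity statement Theorem~\ref{thm:lift_Haar_adeles}. Before either implication I would record the (routine) fact that push-forward along a continuous proper map induces a continuous map $\pi_* : \bP\cM(X_\bA)\to\bP\cM(X_\bR)$: if $c_i\nu_i|_K \wstar \mu_{\bA,Haar}|_K$ for every compact $K\subseteq X_\bA$, then for compact $L\subseteq X_\bR$ the preimage $\pi^{-1}(L)$ is compact, $(\pi_*\nu)|_L = \pi_*(\nu|_{\pi^{-1}(L)})$ for any $\nu$, and weak-$*$ convergence of the uniformly bounded finite measures $c_i\nu_i|_{\pi^{-1}(L)}$ passes through $\pi_*$ (test against $f\circ\pi$, $f\in C_c(X_\bR)$). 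I would also note that $\pi_*\mu_{\bA,Haar}$ is a $\PGL_2(\bR)$-invariant probability measure on $X_\bR$, hence equals $\mu_{\bR,Haar}$. This already gives the direction $[\nu_i]\to[\mu_{\bA,Haar}]\ \Rightarrow\ [\tilde{\nu}_i]\to[\mu_{\bR,Haar}]$.

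For the converse I would argue as follows. Let $\Omega\subseteq\bP\cM(X_\bA)$ be the set of homothety classes of $A_\bR$-invariant locally finite measures. A short computation with the identities $a_*(\mu|_{a^{-1}K}) = (a_*\mu)|_K$ and uniqueness of local weak-$*$ limits shows that $\Omega$ is closed, and by hypothesis every $[\nu_i]$ lies in $\Omega$. To apply the second assertion of Theorem~\ref{thm:locally_finite_converge} it remains to verify that $[\mu_{\bR,Haar}]$ has a \emph{unique} $\pi_*$-preimage inside $\Omega$. The class $[\mu_{\bA,Haar}]$ is one such preimage, since $\mu_{\bA,Haar}$ is $G_\bA$-invariant (so $A_\bR$-invariant) and $\pi_*\mu_{\bA,Haar} = \mu_{\bR,Haar}$. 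Conversely, if $[\nu]\in\Omega$ satisfies $\pi_*[\nu] = [\mu_{\bR,Haar}]$ then $\pi_*\nu = c\,\mu_{\bR,Haar}$ for some $c>0$, whence $\nu(X_\bA) = \pi_*\nu(X_\bR) = c<\infty$; so $\nu$ is finite and, after rescaling, a probability measure with $\pi_*\nu = \mu_{\bR,Haar}$. Since $\pi$ commutes with the $T$-actions, $(X_\bA,\nu,T)\overset{\pi}{\to}(X_\bR,\mu_{\bR,Haar},T)$ is a factor map, and Theorem~\ref{thm:lift_Haar_adeles} forces $\nu = \mu_{\bA,Haar}$. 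Hence $[\mu_{\bA,Haar}]$ is the unique preimage in $\Omega$, and Theorem~\ref{thm:locally_finite_converge} yields $[\nu_i]\to[\mu_{\bA,Haar}]$.

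The argument is essentially a bookkeeping exercise once Theorems~\ref{thm:locally_finite_converge} and~\ref{thm:lift_Haar_adeles} are in hand; the only two points that I expect to require genuine (if minor) care are the closedness of $\Omega$ in the topology on $\bP\cM(X_\bA)$, and the observation that a class mapping onto $[\mu_{\bR,Haar}]$ is automatically representable by a \emph{probability} measure — this last point is what lets us feed $\nu$ directly into Theorem~\ref{thm:lift_Haar_adeles} rather than having to address infinite $A_\bR$-invariant lifts.
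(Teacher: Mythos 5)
Your proposal is correct and follows essentially the same route as the paper: the easy direction by continuity of push-forward along the proper map, and the converse by applying Theorem~\ref{thm:locally_finite_converge} with $\Omega$ the (closed) set of classes of $A_\bR$-invariant measures, where Theorem~\ref{thm:lift_Haar_adeles} supplies the uniqueness of the preimage of $[\mu_{\bR,Haar}]$. Your extra observation that any preimage class of $[\mu_{\bR,Haar}]$ is automatically represented by a finite (hence normalizable) measure is a detail the paper leaves implicit, and it is handled correctly.
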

\begin{proof}
The if part is obvious. For the only if part, we first note that the set of $A_\bR$-invariant measures
is a closed subset (both in $X_\bR$ and in $X_\bA$). By Theorem~\ref{thm:lift_Haar_adeles},
the Haar measure $\mu_{\bA,Haar}$ is the unique preimage of $\mu_{\bR,Haar}$ in the set
of $A_\bR$-invariant measures. Thus, since $\pi^\bA_\bR$ is proper, we can apply 
Theorem~\ref{thm:locally_finite_converge} to deduce that $[\nu_i]\to [\mu_{\bA,Haar}]$.
\end{proof}

\subsection{\label{subsec:lifting_orbit}Lifts of orbit measures}

By Theorem~\ref{thm:main}, we know that the averages of the measures $\delta_{p/q}^{\left[0,2\ln q \right]}$
converge to the Haar measure on $X_\bR=X_2=\PGL_2(\bZ)\backslash \PGL_2(\bR)$ as $q\to \infty$. 
In this section we show how to extend these measures to locally finite $A_\bR$-invariant measures on $X_\bR$, and 
relate their averages to projections of single orbit measures in $X_\bA$.

\begin{definition}\label{def:orbit_measure}
Given a homogeneous space $Z = \Ga_0\backslash G_0$, a unimodular group $H<G_0$, and a closed orbit 
$zH$, we denote by $\mu_{zH}$ the \textit{orbit measure}, namely the pushforward of a restriction of a fixed Haar measure
on $H$ to a fundamental domain of $\on{stab}_{H}(z)$ by the orbit map $h\mapsto zh$. The fact that the orbit is closed 
and the unimodularity of $H$ imply that the orbit measure is locally finite and $H$-invariant. Moreover, up to scaling 
this is the unique 
$H$-invariant locally finite measure supported on $zH$.


For an integer $q$, we write $\mu_q := \sum_{p\in \inv{q}} \delta_{p/q}$, $\mu_{p/q A} := \mu_{x_0 u_{p/q} A}$ and 
$\mu_{qA} := \sum_{p\in \inv{q}} \mu_{p/q A}$.
\end{definition}
We note that $\frac{1}{2\ln q }\mu_{p/qA}-\delta_{p/q}^{\left[0,2\ln q \right]}$ is a positive measure
which is supported on the part of the orbit $x_0 u_{p/q}A$
which goes directly to the cusp. Hence, if $f$ is continuous with
compact support, we expect that its integral with respect to this difference will be small. 
This leads us to the following Lemma  which together with Theorem~\ref{thm:main} imply
Theorem~\ref{thm:main2} as a corollary.
\begin{lemma}
\label{lem:to_locally_finite}For any $f\in C_c(X_\bR)$ we have  
$$\limfi q{\infty}\left|\left[\frac{1}{2\ln(q)}\mu_{qA}-\mu_{q}^{\left[0,2\ln q \right]}\right]\left(f\right)\right|=0.$$ 
\end{lemma}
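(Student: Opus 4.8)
The plan is to reduce the claim to a uniform estimate for a single divergent orbit and then average over $p$. Fix $f\in C_c(X_\bR)$. Since $X_\bR=\bigcup_{M}X^{\le M}$ with each $X^{\le M}$ compact, I would choose $M>1$ with $\supp f\subseteq X^{\le M}$. Because the orbit $x_0u_{p/q}A$ is divergent, the orbit map $a\mapsto x_0u_{p/q}a$ is proper with trivial stabilizer, so under the identification $A\cong\bR$, $t\mapsto a(t)$, we have $\mu_{p/qA}(g)=\int_{\bR}g(x_0u_{p/q}a(t))\,dt$ for every $g\in C_c(X_\bR)$. As already noted before the lemma, the measure $\nu_{p,q}:=\frac{1}{2\ln q}\mu_{p/qA}-\delta_{p/q}^{[0,2\ln q]}$ is positive, and from the above it equals $\frac{1}{2\ln q}\int_{\bR\setminus[0,2\ln q]}\delta_{x_0u_{p/q}a(t)}\,dt$; moreover $\frac{1}{2\ln q}\mu_{qA}-\mu_q^{[0,2\ln q]}=\frac{1}{\varphi(q)}\sum_{p\in\inv q}\nu_{p,q}$ (with the measures normalized so that $\mu_q^{[0,2\ln q]}$ is the probability measure of Theorem~\ref{thm:main}). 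Hence it suffices to bound $|\nu_{p,q}(f)|$ uniformly in $p\in\inv q$.

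Next I would observe that both tails $t<0$ and $t>2\ln q$ of the orbit leave $X^{\le M}$ after a bounded time depending only on $M$. For $t<0$, the lattice $x_0u_{p/q}a(t)$ contains the nonzero vector $(0,1)u_{p/q}a(t)=(0,e^{t/2})$ of norm $e^{t/2}$, so $\on{ht}(x_0u_{p/q}a(t))>M$ whenever $e^{t/2}<M^{-1}$, i.e.\ $t<-2\ln M$; for $t>2\ln q$, the lattice contains $(q,-p)u_{p/q}a(t)=(qe^{-t/2},0)$ of norm $qe^{-t/2}$, so $\on{ht}(x_0u_{p/q}a(t))>M$ whenever $qe^{-t/2}<M^{-1}$, i.e.\ $t>2\ln q+2\ln M$. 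In either case $x_0u_{p/q}a(t)\in X^{>M}$, so $f(x_0u_{p/q}a(t))=0$. Consequently, inside the tails the function $t\mapsto f(x_0u_{p/q}a(t))$ is supported in $[-2\ln M,0)\cup(2\ln q,2\ln q+2\ln M]$, a set of Lebesgue measure $4\ln M$, whence
\[
\bigl|\nu_{p,q}(f)\bigr|=\frac{1}{2\ln q}\Bigl|\int_{[-2\ln M,0)\,\cup\,(2\ln q,\,2\ln q+2\ln M]}f(x_0u_{p/q}a(t))\,dt\Bigr|\le\frac{2\ln M}{\ln q}\,\|f\|_\infty .
\]
This is uniform in $p$, so $\bigl|\bigl(\tfrac{1}{2\ln q}\mu_{qA}-\mu_q^{[0,2\ln q]}\bigr)(f)\bigr|\le\frac{1}{\varphi(q)}\sum_{p\in\inv q}|\nu_{p,q}(f)|\le\frac{2\ln M}{\ln q}\|f\|_\infty\to 0$ as $q\to\infty$.

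I expect no serious obstacle here: this is the routine half of comparing the truncated averages with the locally finite orbit measures, the substance lying in Theorem~\ref{thm:main}. The only point requiring (mild) care is the bookkeeping with the height function — verifying that the exit time of each tail into the cusp is bounded by $2\ln M$ independently of $p$ and $q$ — and this follows at once from the two explicit lattice vectors above.
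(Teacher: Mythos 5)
Your argument is correct and is essentially the paper's own proof: both pick $M$ with $\supp f\subseteq X^{\le M}$, observe that the two explicit short vectors force the orbit into $X^{>M}$ outside $[-2\ln M,\,2\ln q+2\ln M]$, and bound the difference by $\frac{2\ln M}{\ln q}\norm{f}_\infty$. The only difference is that you spell out the lattice vectors (which the paper recalls from \S\ref{subsec:Divergent-geodesics}) and are slightly more careful about the normalization of $\mu_{qA}$ versus $\mu_q^{[0,2\ln q]}$, which is harmless since the statement is homogeneous in that normalization.
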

\begin{proof}
Since $f$ is compactly supported, $supp\left(f\right)\subseteq X_2^{\leq M}$ for some $M>0$.
For any $p\in \inv{q}$ we have that $\Gamma u_{p/q}a\left(t\right)=\Gamma\left(\begin{smallmatrix}
e^{-t/2} & \frac{p}{q}e^{t/2}\\
0 & e^{t/2}
\end{smallmatrix}\right)\in X_2^{>M}$ for all $t\notin[-2\ln(M),2\ln(q)+2\ln(M)]$
so that $f$ is zero there, implying that
\begin{align*}
\left|\left[\frac{1}{2\ln(q)}\mu_{qA}-\mu_{q}^{\left[0,2\ln q \right]}\right]\left(f\right)\right| & \leq\frac{1}{2\ln q }\frac{1}{\varphi\left(q\right)}\sum_{\left(p,q\right)=1}\norm f_{\infty}\cdot4\ln\left(M\right)\\
&=\frac{2\ln\left(M\right)}{\ln q }\norm f_{\infty}\overset{q\to\infty}{\longrightarrow}0.
\end{align*}
\end{proof}

\begin{proof}[Proof of Theorem~\ref{thm:main2}.]
 The proof that $\mu_{qA}\to\mu_{Haar}$ follows from Lemma~\ref{lem:to_locally_finite}
above and Theorem~\ref{thm:main}.
\end{proof}


\begin{definition}
We set $G_{\bA,f}=\prod_{p\in \bP}' \PGL_2(\bQ_p)$ and consider it as a subgroup of $G_\bA$. 
Similarly, we let $A_{\bA,f}=A_\bA \cap G_{\bA,f}$.
\end{definition}
We now turn to the proof of Theorem~\ref{thm:main3}. The strategy will be as follows.
Similarly to the real case, if $\tilde{x}_0=\Gamma_\bA\in X_\bA$, then $\tilde{x}_0 A_\bA$ is a closed orbit and therefore $\mu_{\tilde{x}_0A_\bA}$ is a locally finite $A_\bA$-invariant measure and this remains true if we push this measure by elements from $G_{\bA,f}$. Thus, if $g_i\in G_{\bA,f}$ is a sequence satisfying that the projections of $g_i \mu_{\tilde{x}_0 A_\bA}$ to $X_\bR$ are $\mu_{q_i A}$ with $q_i \to \infty$, then we conclude by Corollary \ref{cor:real_to_adele} and Theorem~\ref{thm:main2} that 
$g_i \mu_{\tilde{x}_0 A_\bA} \to \mu_{\bA,Haar}$. 

Since $\mu_{\tilde{x}_0 A_\bA}$ is $A_\bA$-invariant and $(\pi_\bR ^\bA)_*(hg_i \mu_{\tilde{x}_0 A_\bA})=(\pi_\bR ^\bA)_*(g_i \mu_{\tilde{x}_0 A_\bA})$ for any $h\in K:= \prod_{p\in \bP} \PGL_2(\bZ_p)$, we can consider $g_i$ as elements in $K \backslash G_{\bA,f} / A_\bA$. The next lemma shows that modulo these groups, the $g_i$ have a very simple presentation.

\begin{definition}
For $m\in \inv{n}$ let $\bar{u}_{m/n}:=(u_{m/n},u_{m/n},...)\in G_{\bA,f}$. 
\end{definition}
\begin{lemma}\label{lem:infinity_and_beyond}
The group $G_{\bA,f}$ has a decomposition $G_{\bA,f} = KN'A_{\bA,f}$ where 
$K=\prod_{p\in \bP} \PGL_2(\bZ_p)$ and $N'=\{ \bar{u}_{m/n}: m \in \inv{n}\}$.
Moreover, a sequence $g_i= \bar{u}_{m_i /n_i }, (m_i,n_i)=1$ in $G_\bA / A_\bA$ 
diverges to infinity if and only if $n_i \to \infty$.
\end{lemma}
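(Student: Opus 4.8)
\emph{Plan.} I would obtain the decomposition from a local ($p$-adic) statement and then read off the divergence criterion from the structure of the restricted product $G_{\bA,f}$.

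\emph{Step 1: the decomposition.} The local input I would use is that for every prime $p$,
\[
\PGL_2(\bQ_p)=\bigsqcup_{k\ge 0}\PGL_2(\bZ_p)\,u_{p^{-k}}\,T_p ,\qquad T_p:=T(\bQ_p)\ \text{the diagonal torus},
\]
a \emph{disjoint} union. To prove it, start from the $p$-adic Iwasawa decomposition $\PGL_2(\bQ_p)=\PGL_2(\bZ_p)B(\bQ_p)$, $B$ the upper-triangular Borel, and write every element of $B(\bQ_p)$ as $u_s t$ with $s\in\bQ_p$, $t\in T_p$ (indeed $\smallmat{1&c\\0&e}=u_{c/e}\diag{1,e}$). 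If $s\in\bZ_p$ then $u_s\in\PGL_2(\bZ_p)$; otherwise $s=p^{-k}v$ with $v\in\bZ_p^\times$, $k\ge 1$, and the identity $u_s=\diag{v,1}\,u_{p^{-k}}\,\diag{v,1}^{-1}$ (with $\diag{v,1}\in\PGL_2(\bZ_p)\cap T_p$) puts the element in $\PGL_2(\bZ_p)u_{p^{-k}}T_p$. Disjointness follows because the elementary divisors of $u_{p^{-k}}$ relative to $\bZ_p^2$ recover $k$; equivalently, identifying $\PGL_2(\bQ_p)/T_p$ with pairs of distinct points of $\PP^1(\bQ_p)$ via $gT_p\mapsto(g\infty,g0)$, the $\PGL_2(\bZ_p)$-orbit of $(\infty,p^{-k})$ determines $k$. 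To globalize: given $g=(g_p)\in G_{\bA,f}$ we have $g_p\in\PGL_2(\bZ_p)$ off a finite set $F$, and for $p\in F$ there is a well-defined $k_p$ with $g_p\in\PGL_2(\bZ_p)u_{p^{-k_p}}T_p$. Put $n=\prod_{p\in F}p^{k_p}$. Then at every prime $p$ one has $1/n=p^{-k_p}w_p$ with $w_p\in\bZ_p^\times$, so $\bar u_{1/n}$ lies in the same local double coset as $g_p$; hence $g\in K\bar u_{1/n}A_{\bA,f}$. Since $\bar u_{1/n}\in N'$ (as $1\in\inv n$) and trivially $KN'A_{\bA,f}\subseteq G_{\bA,f}$, this gives $G_{\bA,f}=KN'A_{\bA,f}$.

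\emph{Step 2: the divergence criterion.} For each prime $p$ let $k_p\colon\PGL_2(\bQ_p)/T_p\to\bZ_{\ge0}$ record the index of the double coset above. Each fibre $\PGL_2(\bZ_p)u_{p^{-k}}T_p/T_p$ is open (because $\PGL_2(\bZ_p)$ is open in $\PGL_2(\bQ_p)$), so $k_p$ is continuous into the discrete target, while $\{k_p\le K\}$ is compact, being a finite union of continuous images of the compact group $\PGL_2(\bZ_p)$; thus $k_p$ is proper. For $g_i=\bar u_{m_i/n_i}$ with $\gcd(m_i,n_i)=1$ the $p$-component satisfies $k_p(u_{m_i/n_i}T_p)=v_p(n_i)$: if $p\nmid n_i$ then $m_i/n_i\in\bZ_p$ and the index is $0$, while if $p\mid n_i$ then $p\nmid m_i$, so $m_i/n_i=p^{-v_p(n_i)}\cdot(\text{unit})$. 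Since $g_i$ has trivial real component and $G_\bA/A_\bA\cong(\PGL_2(\bR)/A_\bR)\times(G_{\bA,f}/A_{\bA,f})$, the coset $g_iA_\bA$ diverges iff $g_iA_{\bA,f}$ diverges in $G_{\bA,f}/A_{\bA,f}=\prod_{p}'\bigl(\PGL_2(\bQ_p)/T_p\bigr)$. A subset of this restricted product is precompact precisely when it is supported off the base sets $\PGL_2(\bZ_p)T_p/T_p$ on a single finite set $F$ of primes and has $k_p$ bounded for each $p\in F$; translated via the previous sentence, $\{g_iA_{\bA,f}\}$ has a precompact subsequence iff some subsequence of $(n_i)$ has all prime factors in a fixed finite set with bounded $p$-adic valuations there, i.e.\ takes only finitely many integer values. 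Conversely, for each fixed $N$ there are only finitely many $\bar u_{m/N}$ with $m\in\inv N$, so a subsequence of $(n_i)$ taking finitely many values yields a subsequence of $g_iA_\bA$ lying in a finite (hence compact) set. Hence $g_iA_\bA$ fails to diverge to infinity iff $(n_i)$ has a bounded subsequence, i.e.\ $g_iA_\bA$ diverges $\iff n_i\to\infty$.

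\emph{Main obstacle.} Nothing here is deep; the care is entirely bookkeeping in Step 2 — reducing precompactness in the restricted product $\prod_{p}'(\PGL_2(\bQ_p)/T_p)$ to finitely many local conditions, and checking that the local index $k_p$ is genuinely proper, since this properness is exactly what makes ``bounded denominator $n_i$'' equivalent to ``$g_i$ stays in a compact set''.
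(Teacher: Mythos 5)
Your proof is correct. It differs from the paper's argument in both halves, in instructive ways. For the decomposition, the paper also starts from the Iwasawa decomposition at each place, writing $g_p\in \PGL_2(\bZ_p)u_{m_p/p^{l_p}}T_p$ with $(m_p,p^{l_p})=1$, but then assembles a single global numerator $m$ via the Chinese remainder theorem and left-multiplies by the integral unipotent elements $u_{(m-m_p(n/p^{l_p}))/n}\in\PGL_2(\bZ_p)$ to land on $\bar{u}_{m/n}$. You instead absorb the local unit numerators by conjugating with $\diag{v,1}\in\PGL_2(\bZ_p)\cap T(\bZ_p)$, which yields the slightly sharper conclusion that one may always take the representative $\bar{u}_{1/n}$ (so $N'$ could be shrunk to $\set{\bar{u}_{1/n}:n\in\bN}$); since Claim~\ref{claim:decomp_orbit} is proved for arbitrary coprime $(m,n)$, either normal form serves the application equally well. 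For the divergence criterion the paper offers only the one-line ``follows from the fact that $K$ is compact,'' which really covers just the easy direction (bounded $n_i$ gives finitely many cosets, hence no divergence); your locally constant, proper invariant $k_p$ with $k_p(u_{m/n}T_p)=v_p(n)$, together with the precompactness criterion in the restricted product, supplies the honest argument for the converse and is a genuine improvement in completeness. Two minor points you should make explicit: (a) in Step 1, when passing from the local memberships $g_p\in\PGL_2(\bZ_p)u_{1/n}T_p$ to the adelic statement $g\in K\bar{u}_{1/n}A_{\bA,f}$, note that for $p\notin F$ one has $p\nmid n$ and may take the torus component trivial, so that the tuple of torus components satisfies the restricted-product condition defining $A_{\bA,f}$; (b) in Step 2 you implicitly identify $G_{\bA,f}/A_{\bA,f}$ topologically with $\prod_p'\bigl(\PGL_2(\bQ_p)/T_p\bigr)$, which is standard (the quotient map of a restricted product by a compatibly restricted closed subgroup is open) but deserves a word.
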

\begin{proof}
By the Iwasawa decomposition, modulo $K$ from the left and $A_{\bA,f}$ from the right,
any element $g\in G_{\bA,f}$ can be expressed as $(g_{p_{1}},g_{p_{2}},...)$ where 
$g_{p}=\left(\begin{smallmatrix}
1 & \frac{m_{p}}{p^{l_{p}}}\\
0 & 1
\end{smallmatrix}\right),\quad\left(m_{p},p^{l_{p}}\right)=1,\;0\leq m_{p}<p^{l_{p}}$ for every $p$, and  $l_{p}=m_{p}=0$ for almost every $p$. 
Let $S$ be the finite set of primes for which $g_{p}\notin \PGL_{2}\left(\ZZ_{p}\right)$
(i.e. $l_{p}\geq1$) and let $n=\prod p^{l_{p}}\in\NN$. Using the
Chinese reminder theorem we can find $m\in\ZZ$ such that $0\leq m<n$,
$m\equiv_{p^{l_{p}}}m_{p}\left(np^{-l_{p}}\right)$ for each $p\in S$
and in particular we get that $\frac{m-m_{p}\left(np^{-l_{p}}\right)}{n}\in\ZZ_{p}$
for all the primes $p$. Setting $h_{p}=\left(\begin{array}{cc}
1 & \frac{m-m_{p}\left(n/p^{l_{p}}\right)}{n}\\
0 & 1
\end{array}\right)\in \PGL_2(\bZ_p)$, we obtain that 
\[
h_{p}g_{p}=
\left(\begin{smallmatrix}
1 & \frac{m-m_{p}\left(n/p^{l_{p}}\right)}{n}\\
0 & 1
\end{smallmatrix}\right)
\left(\begin{smallmatrix}
1 & \frac{m_{p}}{p^{l_{p}}}\\
0 & 1
\end{smallmatrix}\right)
=\left(\begin{smallmatrix}
1\quad & \frac{m_{p}}{p^{l_{p}}}+\frac{m-m_{p}\left(n/p^{l_{p}}\right)}{n}\\
0\quad & 1
\end{smallmatrix}\right)=\left(\begin{smallmatrix}
1 & \frac{m}{n}\\
0 & 1
\end{smallmatrix}\right)
\]
which produces the decomposition $G_{\bA,f} = KN'A_{\bA,f}$.

The second claim follows from the fact that $K$ is compact.
\end{proof}
To prove Theorem~\ref{thm:main3} we are left to show that $(\pi_\bR ^\bA)_*(\bar{u}_{m_i/n_i} \mu_{\tilde{x}_0 A_\bA})=\mu_{qA}$ which is the content of the following claim.

\begin{claim}\label{claim:decomp_orbit}
If $\left(m,n\right)=1$, 
then the map 
\[
(X_{\bA},\bar{u}_{m/n}\mu_{\tilde{x}_0 A_\bA},A_\bA)\overset{\pi_{\RR}^{\bA}}{\longrightarrow}\left(X_{\bR},\mu_{nA},A_\bA\right)
\]
is a factor map.
\end{claim}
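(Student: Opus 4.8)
The plan is to compute the pushforward $(\pi_\bR^\bA)_*\big((\bar{u}_{m/n})_*\mu_{\tilde{x}_0 A_\bA}\big)$ explicitly and identify it, up to a harmless positive scalar, with $\mu_{nA}$; the equivariance that makes this a factor map is automatic, since $\pi_\bR^\bA$ is induced by the projection of $\PGL_2(\bR)\times K$ onto its $\PGL_2(\bR)$ factor and hence intertwines the relevant torus actions. First I would coordinatize the closed orbit. Since $\on{stab}_{A_\bA}(\tilde{x}_0)=A_\bA\cap\PGL_2(\bQ)$ is the group of $\bQ$-points of the diagonal torus, which under $\on{diag}(s,t)\mapsto s/t$ is just $\bQ^\times$, we have $\on{stab}_{A_\bA}(\tilde{x}_0)\backslash A_\bA\cong\bQ^\times\backslash\bA^\times$, and every class has a unique representative $(\rho,(w_p)_p)$ with $\rho\in\bR_{>0}$ and $w_p\in\bZ_p^\times$. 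Thus $\mu_{\tilde{x}_0 A_\bA}$ corresponds (up to scaling) to a product $\lambda_\infty\otimes\lambda_f$, where $\lambda_\infty$ is a Haar measure on $\bR_{>0}$ and $\lambda_f$ is the Haar probability measure on the compact group $\wh{\bZ}^\times=\prod_p\bZ_p^\times$, and the corresponding point of $\tilde{x}_0 A_\bA$ is $\Gamma_\bA\cdot\big(\on{diag}(\rho,1),(\on{diag}(w_p,1))_p\big)$. After translating by $\bar{u}_{m/n}$, its component at a prime $p$ becomes $\on{diag}(w_p,1)u_{m/n}=\smallmat{w_p & w_p m/n\\ 0 & 1}$.

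The main computational step is to clear $p$-adic denominators in order to apply $\pi_\bR^\bA$, which sends $\Gamma_\bA g$ to $\PGL_2(\bZ)\,h_\infty$ where $h=\gamma g$ lies in $\PGL_2(\bR)\times K$ for a suitable $\gamma\in\PGL_2(\bQ)$; concretely one needs $\gamma g_p\in\PGL_2(\bZ_p)$ at every finite place $p$. I would show that $\gamma=u_{c/n}$ works, where $c\in\bZ$ is coprime to $n$ and determined modulo $n$ by the ($p$-adically read) congruences $c\equiv -m w_p\pmod{p^{v_p(n)}}$ for every $p\mid n$; the existence of such $c$ and its coprimality to $n$ follow from the Chinese Remainder Theorem together with $\gcd(m,n)=1$ and $w_p\in\bZ_p^\times$. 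Indeed, for $p\nmid n$ both $u_{c/n}$ and $\smallmat{w_p & w_p m/n\\0&1}$ already lie in $\PGL_2(\bZ_p)$, while for $p\mid n$
\[
u_{c/n}\smallmat{w_p & w_p m/n\\ 0 & 1}=\smallmat{w_p & (w_p m+c)/n\\ 0 & 1}\in\GL_2(\bZ_p),
\]
precisely because $w_p m+c\in p^{v_p(n)}\bZ_p$. Hence $\pi_\bR^\bA$ sends the translated point to $\PGL_2(\bZ)\,u_{c/n}\on{diag}(\rho,1)=x_0 u_{c/n}\,a(-\ln\rho)$ (using $a(t)\equiv\on{diag}(e^{-t},1)$ in $\PGL_2(\bR)$), and this runs over the whole divergent orbit $x_0 u_{c/n}A$ as $\rho$ ranges over $\bR_{>0}$.

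It remains to disintegrate. The map $(w_p)_p\mapsto c$ is reduction $\wh{\bZ}^\times\to\inv n$ followed by multiplication by $-m$, hence a surjective homomorphism onto $\inv n$ whose fibres are cosets of a subgroup of index $\vphi(n)$ and therefore each have $\lambda_f$-measure $1/\vphi(n)$; and for each fixed $c$ the $\bR_{>0}$-coordinate pushes $\lambda_\infty$ forward to a Haar measure on $A\cong\bR$, i.e.\ to a fixed multiple of the Lebesgue measure defining $\mu_{x_0 u_{c/n}A}$. Integrating out the two coordinates we obtain $(\pi_\bR^\bA)_*\big((\bar{u}_{m/n})_*\mu_{\tilde{x}_0 A_\bA}\big)=c_0\sum_{c\in\inv n}\mu_{x_0 u_{c/n}A}=c_0\,\mu_{nA}$ for some $c_0>0$, which is exactly what is needed (the constant being immaterial for Corollary~\ref{cor:real_to_adele}). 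One should also verify that the residual sign ambiguity $\on{diag}(-1,1)\in\PGL_2(\bZ)$ in the parametrization, which sends $(\rho,(w_p)_p)$ to $(-\rho,(-w_p)_p)$, is consistent: this replaces $c$ by $-c$, but $\PGL_2(\bZ)\,u_{-c/n}\on{diag}(-\rho,1)=\PGL_2(\bZ)\,\on{diag}(-1,1)u_{c/n}\on{diag}(\rho,1)=\PGL_2(\bZ)\,u_{c/n}\on{diag}(\rho,1)$, the same point of $X_\bR$.

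I expect the main obstacle to be the simultaneous denominator clearing --- exhibiting one rational matrix $\gamma=u_{c/n}$ that works at all finite places at once and tracking its effect at the archimedean place --- which is precisely what forces the Chinese Remainder computation above; the remaining steps are routine bookkeeping with the idele class group $\bQ^\times\backslash\bA^\times$. As an alternative one may first push forward via $\pi_S^\bA$ for $S=\{p:p\mid n\}$, which only rescales by an overall constant (the places outside $S$ contributing a compact factor), and then run the same argument inside $X_S\cong\PGL_2(\bZ)\backslash H_S$ with $\gamma\in\PGL_2(\bZ[S^{-1}])$.
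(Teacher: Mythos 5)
Your proof is correct and follows essentially the same route as the paper: parametrize the closed orbit $\tilde{x}_0A_\bA$ by $\bR_{>0}\times\prod_p\bZ_p^\times$, partition it according to the residue class of the finite part modulo $n$ (your map $(w_p)_p\mapsto c$ is the paper's $\psi_n$ up to multiplication by a unit), clear denominators at all finite places simultaneously with a single CRT-produced $u_{c/n}\in\PGL_2(\bQ)$, and conclude that each of the $\varphi(n)$ fibers is sent onto an orbit $x_0u_{c/n}A$ with the pushforward measure a multiple of $\mu_{x_0u_{c/n}A}$. The only cosmetic difference is that you identify the fiber image by explicit disintegration of $\lambda_\infty\otimes\lambda_f$, whereas the paper invokes uniqueness (up to scalar) of the $A$-invariant locally finite measure on a closed orbit; both are fine, and as you note the overall positive constant is immaterial for the subsequent application via Corollary~\ref{cor:real_to_adele}.
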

\begin{proof}
Since $stab_{A_\bA}(\tilde{x}_0)=A_\bA \cap \PGL_2(\bQ)$ are the diagonal rational matrices, we obtain that its fundamental domain in $A_\bA$ is 
$$A_{\bA}^{0} :=\left\{ \left(\left(\begin{smallmatrix}
e^{-t} & 0\\
0 & 1
\end{smallmatrix}\right),\left(\begin{smallmatrix}
v_{p} & 0\\
0 & 1
\end{smallmatrix}\right),...\right)\in \GL_{2}\left(\bA\right):t\in\RR,\;v_{p}\in\ZZ_{p}^{\times}\right\}\leq A_\bA .$$
It follows that $\mu_{\tilde{x}_0 A_\bA}=\mu_{\tilde{x}_0 A_\bA^0}$ where the map $a\mapsto \tilde{x}_0 a$ where $a\in A_\bA ^0$ is injective and proper. 

Fixing $n$, we define $\psi_{n}:A_\bA ^0\to\inv{n}$
by
\[
\psi_{n}:A_\bA ^0\overset{\Delta_{n}}{\longrightarrow}\prod_{p\mid n}\ZZ_{p}^{\times}\to\prod_{p\mid n}\left(\nicefrac{\ZZ}{p_{i}^{k_{i}}}\right)^{\times}\to\inv{n},
\]
where $\Delta_n$ is the product over $p\mid n$ of the projections
defined by $\left(\begin{smallmatrix}
a & 0\\
0 & 1
\end{smallmatrix}\right)\mapsto a$.

We claim that $\pi_\bR ^\bA( \tilde{x}_0 \psi_n^{-1} (l/m)\bar{u}_{-m/n}) = x_0 u_{l/n} A$ for any $l\in \inv{l}$, namely, the distinct "cosets" are mapped to the distinct $A$-orbits. 

Let $g=(g_\infty, g_{p_1}, g_{p_2},...)\in \psi_n ^{-1}(l/m)$, so that  $g_p=\left(\begin{smallmatrix}
v_{p} & 0\\
0 & 1
\end{smallmatrix}\right)$ and $v_p \equiv_{p^{k_p}} l/m$ where $k_p=\max \{ k : p^k \mid n\}$ for any $p\in \bP$. Since $u_{l/n} \in \Gamma_\bA$, it follows that $\tilde{x}_0 g \bar{u}_{-m/n}=\tilde{x}_0 (u_{l/n},\bar{u}_{l/n}) g \bar{u}_{-m/n}$. For any $p\in \bP$ we have that 
\[
u_{l/n}g_{p}u_{-m/n}=u_{\left(l-m\cdot v_{p}\right)/n}g_{p},
\] 
and since $l-m\cdot v_p \equiv _{p^{k_p}} 0,$ we obtain that $u_{\left(l-m\cdot v_{p}\right)/n},g_{p}\in \PGL_2(\bZ_p)$. By the definition of $\pi^\bA_\bR$, we conclude that $\pi^\bA_\bR(\tilde{x}_0 g \bar{u}_{-m/n})=x_0 u_{l/n} g_\infty$, hence  $\pi_\bR ^\bA( \tilde{x}_0 \psi_n^{-1} (l/m)\bar{u}_{-m/n}) = x_0 u_{l/n} A$. The measure $(\pi_\bR ^\bA)_*(\bar{u}_{m/n} \mu_{\tilde{x}_0 \psi_n^{-1}(l/m)})$ is $A$-invariant and is supported on the orbit of $x_0 u_{l/n} A$ so it must be $\mu_{ l/n A}$.
The proof is now complete by noting that
$$(\pi_\bR ^\bA)_*(\bar{u}_{m/n}\mu_{\tilde{x}_0 A_\bA})=
\sum_{l\in \inv{n} }(\pi_\bR ^\bA)_*(\bar{u}_{m/n} \mu_{\tilde{x}_0 \psi_n^{-1}(l/m)})
\sum_{l\in \inv{n} }\mu_{l/n A}=\mu_{qA}.$$
\end{proof}
Finally, we have all the ingredients to prove Theorem~\ref{thm:main3}.
\begin{proof}[Proof of Theorem~\ref{thm:main3}]
By Lemma~\ref{lem:infinity_and_beyond} we may assume without loss of generality that $g_i = \bar{u}_{m_i/n_i}$ and 
$n_i\to\infty$.
By Theorem~\ref{thm:main2} the measures $[\mu_{n_iA}]$ converge to the homothety class of the 
Haar measure on $X_\bR$ as $i\to\infty$. 
By Claim~\ref{claim:decomp_orbit}  $(\pi^{\bA}_\bR)_*\bar{u}_{m_i/n_i}\mu_{\tilde{x}_0 A_\bA} = \mu_{n_iA}$ 
 so we can apply Corollary~\ref{cor:real_to_adele} to conclude
that $[g_i\mu_{\tilde{x}_0 A_\bA}]$ converge to the homothety class of the Haar measure on $X_\bA$ as desired.
\end{proof}

\section{\label{sec:Application-to-CFE}From the geodesic flow to the Gauss map}
In this section we translate the results obtained in \S\ref{sec:max_entropy_for_partial_orbits} to derive 
consequences on continued fraction expansion (c.f.e). Using a certain cross-section for the flow $a(t)$ on $X_2$
we relate the partial-orbit measures  $\delta_{p/q}^{[0,2\ln(q)]}$ to the normalized counting measures of the finite 
orbit in $[0,1]$ of $p/q$ under the Gauss map.

We begin by recalling the connection between the continued fraction expansion 
and the geodesic flow on the quotient of the hyperbolic plane
$\HH$ by the action of $\PSL_2(\bZ)$ by M\"obius transformations. We keep the exposition brief and refer the 
reader to the
the book of Einsiedler and Ward
\cite[section 9.6]{einsiedler_ergodic_2010} for a detailed account. We bother to repeat many of the things written there
as we are mostly concerned with divergent geodesics which form a null set completely ignored in their discussion.

Identifying the unit tangent bundle $T^{1}\HH$ of the hyperbolic plane with $\PSL_{2}\left(\RR\right)$
we get that every matrix $g=\left(\begin{smallmatrix}
a & b\\
c & d
\end{smallmatrix}\right)\in \PSL_2(\bR)$ defines a unique geodesic in $\HH$ with endpoints 
\begin{align*}
\alpha\left(g\right) & :=\limfi t{\infty}\left(\begin{array}{cc}
a & b\\
c & d
\end{array}\right)\left(\begin{array}{cc}
e^{t/2} & 0\\
0 & e^{-t/2}
\end{array}\right)i
=\limfi t{\infty}\frac{ae^{t}i+b}{ce^{t}i+d}=\frac{a}{c},\\
\omega\left(g\right) & :=\limfi t{\infty}\left(\begin{array}{cc}
a & b\\
c & d
\end{array}\right)\left(\begin{array}{cc}
e^{-t/2} & 0\\
0 & e^{t/2}
\end{array}\right)i
=\limfi t{\infty}\frac{a\frac{i}{e^{t}}+b}{c\frac{i}{e^{t}}+d}=\frac{b}{d}.
\end{align*}
Following Einsiedler and Ward (see Figure~\ref{fig:geodesics}) we define 
\begin{align*}
C_{+} & =\left\{ g\in A\cdot \SO_{2}\left(\RR\right):\alpha(g)\leq -1<0< \omega(g)<1\right\} \\
C_{-} & =\left\{ g\in A\cdot \SO_{2}\left(\RR\right):-1<\omega(g)< 0<1\leq \alpha(g)\right\} \\
C & =C_{+}\cup C_{-},
\end{align*}
considered as subsets of $\PSL_2(\bR)$.

\begin{figure}[h]
\includegraphics[bb=0bp 5bp 669bp 338bp,scale=0.4]{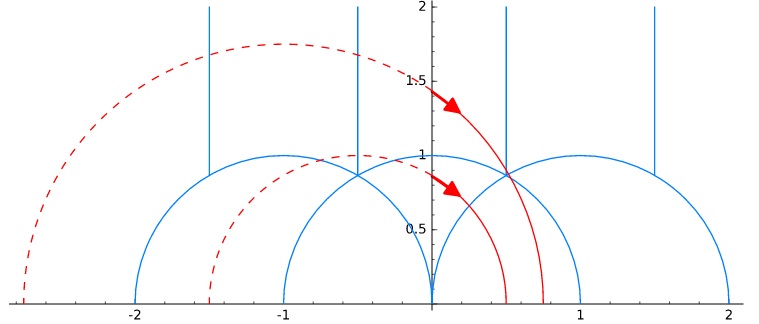}

\caption{The arrows above represent two elements from $C_+$. $C^-$  is obtained by reflection through the $y$-axis of $C_+$.}\label{fig:geodesics}
\end{figure}
We leave the following simple proposition to the reader.
\begin{proposition}
The projection $\pi:\PSL_{2}\left(\RR\right)\to X_{2}=\PSL_{2}(\ZZ)\backslash\PSL_{2}\left(\RR\right)$
restricts to a homeomorphism on $C$.%
\end{proposition}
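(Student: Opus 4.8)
First I would dispose of the topology. Since $\Ga=\PSL_2(\ZZ)$ is discrete in $\PSL_2(\RR)$, its left translation action is free and properly discontinuous, so $\pi$ is a covering map — in particular continuous, open, and a local homeomorphism. Hence the whole content is that $\pi|_C$ is injective: granting that, $\pi|_C\colon C\to\pi(C)$ is a continuous bijection, and its inverse is continuous by a routine compactness argument (elements of $C$ whose $\pi$-image lies in a fixed compact subset of $X_2$ form a relatively compact family in $\PSL_2(\RR)$, because their base points stay in a compact subset of the imaginary axis and $\SO_2(\RR)$ is compact; the local-homeomorphism property of $\pi$ then does the rest).

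The key observation for injectivity is that, for $h\in C$, the base point $h\cdot i$ — which lies on the positive imaginary axis since $h\in A\cdot\SO_2(\RR)$ — is exactly the unique point at which the geodesic of $h$ meets the imaginary axis: indeed $\al(h)$ and $\om(h)$ have opposite signs when $h\in C$, so that geodesic is a semicircle crossing the imaginary axis just once, at $i\sqrt{-\al(h)\om(h)}$. Now suppose $g,g'\in C$ with $g'=\ga g$, $\ga=\smallmat{a&b\\c&d}\in\PSL_2(\ZZ)$ (take $\det\ga=1$). From $\al(\ga g)=\ga\cdot\al(g)$, $\om(\ga g)=\ga\cdot\om(g)$ (immediate from the defining limits) and the observation just made, $\ga$ carries $ir:=g\cdot i$ to $ir':=g'\cdot i$; comparing real and imaginary parts in $\ga(ir)=\dfrac{acr^2+bd+ir}{c^2r^2+d^2}=ir'$ gives
\[acr^2+bd=0,\qquad r'=\frac{r}{c^2r^2+d^2}.\]

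It then remains to conclude that $\ga=1$. If $c\neq0$ and one of $a,b,d$ vanishes, then $acr^2+bd=0$ together with $ad-bc=1$ forces $a=d=0$, so $\ga$ represents $z\mapsto-1/z$; but then $\om(g')=-1/\om(g)$ with $|\om(g)|<1$, contradicting $|\om(g')|<1$. If $c\neq0$ and $a,b,c,d$ are all nonzero, then $r^2=-bd/(ac)>0$ gives $abcd<0$, while $c^2r^2+d^2=d/a>0$ (using $ad-bc=1$) gives $ad>0$; hence $bc<0$ and $\det\ga=ad-bc\geq 2$, again a contradiction. So $c=0$, meaning $\ga$ is the translation $z\mapsto z+b$; since $\om(g)$ and $\om(g)+b$ both lie in $(-1,1)$ we get $b\in\{-1,0,1\}$, and $b=\pm1$ is impossible because it would force $\al(g')=\al(g)+b$ and $\om(g')=\om(g)+b$ to share the sign of $b$, contradicting that the two endpoints of any element of $C$ have opposite signs. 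Hence $\ga=1$, as desired. The only genuine idea is the base-point identification, which turns injectivity into the one-line determinant estimate; I would expect the only places needing care (rather than a real obstacle) to be the $C_\pm$ sign bookkeeping and the passage from injectivity to a homeomorphism.
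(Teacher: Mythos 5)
Your injectivity argument is the real content here and it is correct: the base-point identification $g\cdot i=i\sqrt{-\al(g)\om(g)}$, the resulting equation $acr^2+bd=0$ together with $r'=r/(c^2r^2+d^2)$, and the three-way case analysis (inversion case excluded by $|\om(\ga g)|=1/|\om(g)|>1$, the all-nonzero case by the estimate $ad-bc\ge 2$, the $c=0$ case by the sign bookkeeping) all check out; the paper gives no proof of this proposition, so there is nothing to compare against. A small simplification: when $c=0$ and $a=d=1$, the real-part equation $acr^2+bd=0$ already reads $b=0$, so the endpoint discussion for $b=\pm1$ is not needed.

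The genuine gap is in the passage from injectivity to ``homeomorphism onto the image.'' The three ingredients you invoke --- $\pi$ is a covering map, $\pi|_C$ is injective, and $\{h\in C:\pi(h)\in K\}$ is relatively compact in $\PSL_2(\RR)$ for compact $K\subseteq X_2$ --- do not imply continuity of $(\pi|_C)^{-1}$: the model example $\RR\to\RR/\ZZ$ with $C=[0,1)$ satisfies all three, yet $\pi|_C$ fails to be a homeomorphism onto its image, because points of $C$ near $1$ map arbitrarily close to $\pi(0)$. In your setting, if $\pi(g_n)\to\pi(g)$ with $g_n,g\in C$, your compactness argument only yields a subsequence $g_{n_k}\to h$ with $\pi(h)=\pi(g)$, i.e.\ $h=\ga g$ for some $\ga\in\Ga$; but $h$ lies a priori only in the closure $\overline{C}$, not in $C$, so injectivity on $C$ does not exclude $\ga\neq e$. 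What is actually needed is the stronger statement that $\ga g\notin\overline{C}$ for every $g\in C$ and every $\ga\neq e$. Fortunately your own computation extends almost verbatim: $\overline{C}$ is still contained in $A\cdot\SO_2(\RR)$, its elements satisfy $|\om|\le 1$, and the only new degenerate points are those lying over the diagonal subgroup, with $\om=0$ and $\al=\infty$; the relation $\ga(ir)=ir'$ and the equation $acr^2+bd=0$ persist, the inversion case still gives $|\om(\ga g)|>1\ge\sup_{\overline{C}}|\om|$, the all-nonzero case uses only the determinant, and in the $c=0$ case the real-part equation forces $b=0$ outright. So the proposition does follow from your case analysis, but the sentence ``a routine compactness argument \dots\ the local-homeomorphism property of $\pi$ then does the rest'' is precisely where the written proof is incomplete and needs this closure argument spelled out.
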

Henceforth, we will identify $C$ with $\pi(C)$ and denote points there by $g, \bar{g}$ respectively. This will allow us to speak of the start point $\al(\bar{g})$  and end point 
$\om(\bar{g})$ for $\bar{g}\in \pi(C)$. For such $\bar{g}$ we will write $sign(\bar{g})\in\{\pm 1\}$ according to the set
$C_+$ or $C_-$ for which $g$ belongs to.

Our next goal is to show that the Gauss map is a factor of the first return map of the geodesic flow 
on $X_2$ to $\pi\left(C\right)$.
We start by defining a coordinate system on $C$.  Consider the set
$$\tilde{Y}=\left\{ \left(y,z\right):y\in\left(0,1\right),\;0<z\leq \frac{1}{1+y}\right\}\times \{\pm 1\} \subseteq\RR^{2}\times \{\pm 1\}$$
and note that the map from $C$ to $\tilde{Y}$ given by
$$\bar{g}\mapsto \left(\av{\omega(\bar{g})},\frac{1}{\av{\omega(\bar{g})-\alpha(\bar{g})}},sign(\bar{g})\right)$$
is a homeomorphism. In what follows we will always use these coordinates. 
\begin{definition}
Let $\bar{g}\in\pi(C)$. We define the \emph{return
time} $r_{C}(\bar{g})$ and the \emph{first return map} $T_{C}(\bar{g})$
to be
\begin{align*}
r_{C}(\bar{g}) & :=\min\left\{ t>0:\bar{g}\cdot a\left(t\right)\in\pi(C)\right\} \\
T_{C}(\bar{g}) & :=\bar{g}\cdot a\left(r_{C}(\bar{g})\right)\in\pi(C).
\end{align*}
This map is defined only when the forward orbit $\bar{g}\cdot a(t), t>0$ meets $\pi(C)$. Otherwise, we will write $r_C(\bar{g})=\infty$.
\end{definition}
\begin{remark}
While it is not trivial, it is not difficult to show that the minimum
in the definition of $r_{C}(\bar{g})$ is well defined (and not
just the infinimum). Moreover, $r_{C}(\bar{g})$ is uniformly
bounded from below, i.e. ${\displaystyle \inf_{g\in\pi(C)}}r_{C}(\bar{g})>0$.
\end{remark}
We now use the return time map in order to extend our coordinate system.
\begin{lemma}
Let $\hat{Y}=\{(\bar{g},t) : 0<t<r_C(\bar{g})\}\subseteq \tilde{Y}\times \bR$ and set $\theta:(\bar{g},t)\mapsto \bar{g}\cdot a(t)$. If $\mathrm{dm}$ is the restriction of the product measure on $\tilde{Y}\times \bR$ to $\hat{Y}$, then $\kappa \theta_*(\mathrm{dm})=\mu_{Haar}$ for some $\kappa>0$, or equivalently for any $f\in C_c(X_2)$ we have that
\begin{equation}
\int_{X_2}f\left(x\right)\dmu_{Haar}=\kappa\int_{\left(y,z,\epsilon\right)\in Y }\left(\int_{t=0}^{r_{C}\left(y,z,\epsilon\right)}f\left(\left(y,z,\epsilon\right)a\left(t\right)\right)\dt\right)\dmu_{Leb}.\label{eq:orbit_to_haar}
\end{equation}
\end{lemma}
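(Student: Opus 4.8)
The statement asserts that the "suspension flow" construction over the cross-section $\pi(C)$ recovers $X_2$ together with its Haar measure, up to a normalizing constant $\kappa$. The core of the argument is that $\theta$ is a measure-preserving bijection (up to null sets) between $\hat Y$ with the product measure $\dm$ and $X_2$ with a multiple of the $a(t)$-flow-invariant measure, and the only $a(t)$-flow-invariant probability measure that is also $\PSL_2(\bR)$-invariant is $\mu_{Haar}$ — but here it suffices to use that $\theta_*(\dm)$, being built as a suspension of a return map, is automatically $a(t)$-invariant (where defined), and then invoke that the flow-invariant measure on $X_2$ coming from the $G$-invariant structure is unique up to scaling along the generic part. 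Concretely, the plan is: first show $\theta$ is injective on the set where $r_C<\infty$; this is essentially the definition of the first return map — if $\theta(\bar g_1,t_1)=\theta(\bar g_2,t_2)$ with $0<t_i<r_C(\bar g_i)$ then the orbit segment forces $\bar g_1=\bar g_2$ and $t_1=t_2$, because between two consecutive visits to $\pi(C)$ the orbit does not return. Second, show $\theta$ is essentially surjective: every point $x\in X_2$ whose forward and backward $a(t)$-orbit both meet $\pi(C)$ — which, by the cross-section property, is a co-null set, the complement being the union of the (measure-zero) divergent orbits and orbits asymptotic to the cusp — lies in the image.

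Next I would address the measure-theoretic identity. Since $\theta$ intertwines the vertical translation flow $(\bar g, t)\mapsto (\bar g, t+s)$ on $\hat Y$ (which preserves $\dm$, as $\dm$ is the restriction of a product with Lebesgue measure in the $t$-variable) with the geodesic flow $a(s)$ on $X_2$ (off a null set), the pushforward $\theta_*(\dm)$ is an $a(t)$-invariant locally finite measure on $X_2$ that gives finite mass to $X_2$ (its total mass is $\int_{\tilde Y} r_C(\bar g)\,d(\text{Leb}\times\text{counting})$, which one checks is finite — this uses the uniform lower bound on $r_C$ from the Remark, together with the fact that $r_C$ has finite integral against the base measure, e.g.\ because $X_2$ has finite volume and the cross-section is "thick enough"). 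Having a finite, nonzero, $a(t)$-invariant Borel measure on $X_2$, I would then conclude it is a multiple of $\mu_{Haar}$: this is where I invoke uniqueness — the standard fact (which can be cited from Einsiedler–Ward \cite{einsiedler_ergodic_2010}) that $\mu_{Haar}$ is the unique $a(t)$-invariant probability measure arising this way, or more robustly, that any $A$-invariant finite measure absolutely continuous with respect to $\mu_{Haar}$ equals it up to scaling. Actually the cleanest route: $\theta_*(\dm)$ is not just $A$-invariant but, because the suspension coordinates come from the full $\PSL_2(\bR)$ structure near $\pi(C)$ and the flow acts minimally on the generic part, one shows $\theta_*(\dm)$ is quasi-invariant under the whole unstable horocycle and hence equals $c\cdot\mu_{Haar}$; setting $\kappa = c^{-1}$ finishes it.

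Unwinding the definitions then gives formula \eqref{eq:orbit_to_haar} directly: for $f\in C_c(X_2)$,
\[
\int_{X_2} f\, d\mu_{Haar} = \kappa \int_{X_2} f\, d\theta_*(\dm) = \kappa \int_{\hat Y} f(\theta(\bar g,t))\, d\dm = \kappa \int_{\tilde Y}\int_0^{r_C(\bar g)} f(\bar g\cdot a(t))\, dt\, d\mu_{Leb}(\bar g),
\]
where in the last step Fubini is applied and $d\mu_{Leb}$ denotes the product of Lebesgue measure on the $(y,z)$-region with counting measure on $\{\pm 1\}$, matching the notation in the statement.

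**Main obstacle.** The technically delicate point is not the bijectivity of $\theta$ (which is close to formal) but establishing that $\theta_*(\dm)$ is genuinely the Haar measure and not merely some flow-invariant measure, i.e.\ pinning down the constant and ruling out other invariant measures. The honest way to see this without circularity is to verify that the Lebesgue density on $\tilde Y$ together with the flow exactly reconstructs the Haar density — a direct Jacobian computation in the $(\al,\om)$-coordinates, using that $d\al\, d\om/(\al-\om)^2$ is, up to constants, the $\PSL_2(\bR)$-invariant measure on the space of geodesics, and that our coordinate $(y,z) = (|\om|, |\om-\al|^{-1})$ is a smooth reparametrization of $(\al,\om)$ with a Jacobian that precisely cancels the $(\al-\om)^{-2}$ factor. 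That cross-section normalization computation is the crux; once it is in place, \eqref{eq:orbit_to_haar} is immediate.
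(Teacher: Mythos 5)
Your proposal is correct and takes essentially the same route as the paper, which simply defers to the proof of Proposition 9.25 in Einsiedler--Ward: that proof is precisely the suspension-flow construction over the cross-section together with the Jacobian computation showing that $dy\,dz$ on $\tilde{Y}$ corresponds to the $\PSL_2(\bR)$-invariant measure $d\alpha\,d\omega/(\alpha-\omega)^2$ on the space of geodesics (indeed one checks $\omega=\epsilon y$, $\alpha=\epsilon(y-1/z)$ gives Jacobian $1/z^2=(\alpha-\omega)^2$, exactly cancelling), which you correctly single out as the crux. One caution: the intermediate suggestion to deduce $\theta_*(\dm)=c\,\mu_{Haar}$ from $a(t)$-invariance alone is a dead end --- the geodesic flow is very far from uniquely ergodic, and the finiteness of $\int_{\tilde Y} r_C$ does not follow from the uniform \emph{lower} bound on $r_C$ --- so the Jacobian computation you relegate to the ``main obstacle'' paragraph is not an optional refinement but the entire content of the lemma.
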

\begin{proof}
This follows from the proof Proposition 9.25 in \cite{einsiedler_ergodic_2010}.
\end{proof}
The connection between the geodesic flow and the Gauss map is given 
in the following two lemmas.
\begin{lemma}[Lemma 9.22 in \cite{einsiedler_ergodic_2010}]
\label{lem:return_map}Under the identification $\pi(C)\simeq \tilde{Y},$
the first return map (where it is defined) is given by
\[
T_{C}\left(y,z,\epsilon\right)=\left(T\left(y\right),y\left(1-yz\right),-\epsilon\right)
\]
where $T(x)=\frac{1}{x}-\flr{\frac{1}{x}}$ is the Gauss map.
\end{lemma}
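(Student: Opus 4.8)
The plan is to exploit the classical dictionary between the $a(t)$-flow on $X_2$ and the Farey tessellation of $\HH$. Two elementary facts are the starting point. First, the endpoints $\alpha(g),\omega(g)$ are invariant under the flow $g\mapsto ga(t)$ and transform by M\"obius maps under left multiplication: $\alpha(\gamma g a(t))=\gamma\alpha(g)$ and $\omega(\gamma g a(t))=\gamma\omega(g)$ for $\gamma\in\PSL_2(\ZZ)$. Second, $g\in A\cdot\SO_2(\RR)$ if and only if the base point $g\cdot i$ lies on the positive imaginary axis $i\RR_{>0}$ (as $A\cdot\SO_2(\RR)\cdot i=i\RR_{>0}$ and $\SO_2(\RR)=\on{Stab}(i)$). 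Since the inequalities cutting out $C_+$ and $C_-$ involve only the (flow-invariant) endpoints, for $g\in C$ and $t>0$ one has $\pi(ga(t))\in\pi(C)$ exactly when $ga(t)\cdot i$ lies on some edge $e=\gamma^{-1}(i\RR_{>0})$ of the Farey tessellation and, for at least one of the (two) elements $\gamma\in\PSL_2(\ZZ)$ carrying $e$ to $i\RR_{>0}$, the ordered pair $(\gamma\alpha(g),\gamma\omega(g))$ satisfies the $C_+$ or $C_-$ inequalities. By the reflection $w\mapsto-\bar w$ in the imaginary axis, which descends to $X_2$, commutes with the flow, and interchanges $C_+$ and $C_-$ (acting on coordinates by $(y,z,\epsilon)\mapsto(y,z,-\epsilon)$), it suffices to treat $g\in C_+$: thus $\alpha:=\alpha(g)\le-1$, $\omega:=\omega(g)\in(0,1)$, so $y=\omega$ and $z=(\omega-\alpha)^{-1}$, and we set $a_1:=\flr{1/\omega}\ge1$ (assuming $\omega\notin\QQ$, else the forward orbit diverges and $T_C$ is undefined).

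The heart of the argument is to follow the forward ray $\{ga(t)\cdot i:t\ge0\}$ through the tessellation. At $t=0$ it lies on the edge $\{0,\infty\}$ with $g\in A\cdot\SO_2(\RR)$, and for $t>0$ it enters the ideal triangle with vertices $0,1,\infty$. Using $\alpha<-1<0<\omega<1$ and the Stern--Brocot structure, an elementary induction shows that the ray then crosses, in this order, exactly the edges $\{0,1\},\{0,\tfrac12\},\dots,\{0,\tfrac1{a_1}\}$ (it leaves the triangle $\{0,\tfrac1{k+1},\tfrac1k\}$ through $\{0,\tfrac1{k+1}\}$ as long as $k+1\le a_1$, and through $\{\tfrac1{a_1+1},\tfrac1{a_1}\}$ once $k=a_1$). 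Now $\{0,\tfrac1j\}=\gamma_j^{-1}(i\RR_{>0})$ for $\gamma_j:=\smallmat{j & -1\\ 1 & 0}\in\PSL_2(\ZZ)$, and $\gamma_j\omega=j-1/\omega$. For $1\le j\le a_1-1$ we have $1/\omega\ge a_1$, hence $\gamma_j\omega\le j-a_1\le-1$ since $j\le a_1-1$, which lies in neither $(0,1)$ nor $(-1,0)$; for the other representative $S\gamma_j$, with $S=\smallmat{0 & -1\\ 1 & 0}$, the $\alpha$-value is $S\gamma_j\alpha=\alpha/(1-j\alpha)\in(-1,0)$, again incompatible with both windows. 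Thus $\pi(ga(t))\notin\pi(C)$ at every crossing with $j\le a_1-1$, and trivially at non-crossing times (where $ga(t)\cdot i$ is interior to a triangle). Hence the first return occurs precisely when the ray meets $\{0,\tfrac1{a_1}\}$: this time is $r_C(\pi(g))$, and $T_C(\pi(g))=\pi\bigl(\gamma_{a_1}\,g\,a(r_C(\pi(g)))\bigr)$.

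Finally one reads off the coordinates. With $\gamma=\gamma_{a_1}$ one gets $\gamma\omega=a_1-1/\omega=-(1/\omega-a_1)=-T(\omega)\in(-1,0)$ and $\gamma\alpha=a_1-1/\alpha\in(a_1,a_1+1]\subseteq(1,\infty)$, so $(\gamma\alpha,\gamma\omega)$ satisfies the $C_-$ inequalities, confirming that the return lands in $\pi(C_-)$ and the sign becomes $-\epsilon$. The first coordinate of $T_C(\pi(g))$ is $\av{\gamma\omega}=T(\omega)=T(y)$, and the second is
\[
\frac{1}{\av{\gamma\omega-\gamma\alpha}}=\frac{1}{\gamma\alpha-\gamma\omega}=\frac{1}{\tfrac1\omega-\tfrac1\alpha}=\frac{\omega\alpha}{\alpha-\omega}=-\omega\alpha z,
\]
which, using $\alpha=\omega-1/z$, equals $-\omega(\omega-1/z)z=\omega(1-\omega z)=y(1-yz)$, as claimed; the $C_-$ case follows verbatim after the reflection. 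The step I expect to be the main obstacle is the inductive bookkeeping of the second paragraph --- proving the forward ray crosses exactly $\{0,\infty\},\{0,1\},\dots,\{0,\tfrac1{a_1}\}$ and no other edge before returning --- together with the care needed for the rational or boundary configurations, where $T_C$ is undefined or where one must check that the infimal return time is attained.
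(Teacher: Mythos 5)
The paper gives no proof of this lemma at all --- it is quoted directly from Einsiedler--Ward --- so your proposal supplies an argument where the paper defers to a citation. Your route, following the cutting sequence of the geodesic through the Farey tessellation, is the classical one and is correct in substance: candidate return times are exactly the Farey-edge crossing times because $\gamma ga(t)\in A\cdot\SO_2(\RR)$ iff $ga(t)\cdot i$ lies on $\gamma^{-1}(i\RR_{>0})$, which is a Farey edge, and the edge determines $\gamma$ up to the involution $S$; the elimination of the edges $\{0,1/j\}$ for $j\le a_1-1$ by checking both representatives ($\gamma_j\omega=j-1/\omega\le-1$ and $S\gamma_j\alpha=\alpha/(1-j\alpha)\in(-1,0)$, each incompatible with both windows) is right; and the coordinate computation at $\{0,1/a_1\}$, giving $\gamma_{a_1}\omega=-T(\omega)$ and $z'=-\omega\alpha z=y(1-yz)$, checks out. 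The induction you flag as the main obstacle is routine once one uses the criterion that the geodesic with endpoints $\alpha,\omega$ crosses a Farey edge $\{p,r\}$ iff the two pairs separate each other on the boundary circle; with $\alpha\le-1$ and $1/(a_1+1)<\omega<1/a_1$ this immediately yields that the crossed edges in the fan at $0$ are exactly $\{0,1/k\}$, $k\le a_1$, in increasing order of $k$.

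One point does need repair, because it matters for how the lemma is used later in the paper. You assert that for $\omega\in\QQ$ the forward orbit diverges and $T_C$ is undefined. That is only true when $T(\omega)=0$, i.e.\ $\omega=1/n$ (there the geodesic is asymptotic to the edge $\{0,1/a_1\}$ rather than crossing it, and indeed never returns). For a rational $\omega$ with $T(\omega)\neq0$ one still has $1/(a_1+1)<\omega<1/a_1$ strictly, the forward ray crosses $\{0,1/a_1\}$, and your entire computation goes through verbatim --- nothing in it uses irrationality beyond $T(\omega)\in(0,1)$. This is precisely the case the paper needs: Lemmas~\ref{lem:first_meet} and~\ref{lem:pushing_measures} iterate $T_C$ along the forward orbit of $\Gamma u_{p/q}$, whose forward endpoint is rational, so as written your proof would leave the lemma vacuous exactly where it is applied. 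The fix is one line: replace the hypothesis ``$\omega\notin\QQ$'' by ``$T(\omega)\neq0$''.
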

\begin{lemma}\label{lem:first_meet}
Let $0<x<\frac{1}{2}$ where $x\neq \frac{1}{n}, n\in \bN$. The first time that the orbit $\Gamma u_x a(t),\; t\in \bR$ meets $\pi(C)$ is at the point $(T(x),x,-1)$ for some $t\geq 0$. Similarly for $\frac{1}{2}<x<1, x\neq 1-\frac{1}{n}$, the first meeting is at $(T(1-x),1-x,1)$. 
If $x=\frac{p}{q}$ is rational, then the last time the orbit meets $\pi(C)$ is for some $t\leq 2\ln(q)$.
Finally, we have that $T^2(x)=T(1-x)$ for $\frac{1}{2}<x<1$.
\end{lemma}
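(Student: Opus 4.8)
The statement packages three assertions: the identification of the first meeting point in the two ranges $x<\tfrac12$ and $x>\tfrac12$, the bound $t\le 2\ln q$ on the last meeting when $x=p/q$ is rational, and the identity $T^2(x)=T(1-x)$ on $(\tfrac12,1)$. I would dispose of the last one first, as a one‑line computation: for $x\in(\tfrac12,1)$ we have $\flr{1/x}=1$, so $T(x)=\tfrac1x-1=\tfrac{1-x}{x}$, while $\tfrac1{1-x}=1+\tfrac{x}{1-x}$ forces $\flr{\tfrac1{1-x}}=1+\flr{\tfrac{x}{1-x}}$, whence $T(1-x)=\tfrac1{1-x}-\flr{\tfrac1{1-x}}=\tfrac{x}{1-x}-\flr{\tfrac{x}{1-x}}=T\big(\tfrac{1-x}{x}\big)=T(T(x))$.

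For the other two parts the plan is to translate ``$\Ga u_x a(t)\in\pi(C)$'' into a statement about Farey fractions. An intersection at parameter $t$ means $\ga u_x a(t)\in C$ for some $\ga=\smallmat{a&b\\c&d}\in\Ga$; since $\ga u_x a(t)\cdot i=\ga(x+e^{-t}i)$, demanding that this sit on the imaginary axis gives $e^{-2t}=-\tfrac{(ax+b)(cx+d)}{ac}$, which after factoring out $a$ and $c$ becomes
\[
e^{-2t}=-\big(x-\ga^{-1}(0)\big)\big(x-\ga^{-1}(\infty)\big).
\]
Write $\tfrac pq:=\ga^{-1}(0)$ and $\tfrac{p'}{q'}:=\ga^{-1}(\infty)$ in lowest terms with $q,q'>0$; these are Farey neighbours, $|pq'-p'q|=1$. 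Positivity of the right side is exactly that $x$ lies strictly between $\tfrac pq$ and $\tfrac{p'}{q'}$, and then $e^{-2t}=|x-\tfrac pq|\,|x-\tfrac{p'}{q'}|$. Moreover $\al(\ga u_xa(t))=\ga(\infty)$ has $|\al|=q/q'$, and $\om(\ga u_xa(t))=\ga(x)$ has $|\om|=\tfrac{q|x-p/q|}{q'|x-p'/q'|}$. Since a point of $C$ is characterised among $A\,\SO_2(\bR)$ by $|\al|>1$ and $|\om|<1$ (the sign pattern then selects $C_+$ or $C_-$), the condition $\ga u_xa(t)\in C$ becomes: $q>q'$ and $q|x-\tfrac pq|<q'|x-\tfrac{p'}{q'}|$; conversely every Farey pair with these properties arises from a unique $\ga$. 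Call such a pair \emph{admissible}. An admissible pair has $x$ strictly between its endpoints, so its interval is one of the nested Stern--Brocot intervals collapsing to $x$. (It matters here that $C$ is taken with strict inequalities: the matrix $\smallmat{1&0\\-1&1}$ always produces an intersection with $\al=-1$, i.e.\ the boundary locus $z=\tfrac1{1+y}$ of $\wt Y$, which would otherwise be a shallower intersection than the asserted one.)

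With this dictionary the first meeting is the admissible pair maximising $|x-\tfrac pq|\,|x-\tfrac{p'}{q'}|$. The key observation is that this quantity is non‑increasing as the Stern--Brocot interval shrinks: passing to the child that still contains $x$ leaves one endpoint fixed and replaces the other by the mediant, which lies between $x$ and the endpoint it replaces, so one of the two factors can only decrease. Hence the maximum is attained at the shallowest admissible ancestor of $x$. For $x\in(0,\tfrac12)$, with $a_1=\flr{1/x}\ge2$, the ancestors strictly above $(0,\tfrac1{a_1})$ are $(0,\infty)$ (degenerate, $q'=0$), $(0,1)$ (equal denominators, so $q>q'$ fails), and $(0,\tfrac1j)$ for $2\le j<a_1$ (admissibility would force $x>\tfrac1{j+1}\ge\tfrac1{a_1}$, impossible); and $(0,\tfrac1{a_1})$ itself is admissible since $x>\tfrac1{a_1+1}$. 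It corresponds to $\ga^*=\smallmat{a_1&-1\\1&0}$, with $\al=a_1$, $\om=-T(x)$, and $e^{-2t^*}=x\big(\tfrac1{a_1}-x\big)=\tfrac{x^2T(x)}{a_1}<1$, so $t^*>0$, and coordinates $(T(x),x,-1)$. For $x\in(\tfrac12,1)$, with $b=\flr{1/(1-x)}\ge2$, the same bookkeeping (the shallow ancestors $(0,\infty),(0,1),(\tfrac k{k+1},1)$ for $1\le k\le b-2$ all fail admissibility) shows the shallowest admissible ancestor is $(\tfrac{b-1}{b},1)$, corresponding to $\ga=\smallmat{b&1-b\\-1&1}$, with $\al=-b$, $\om=T(1-x)$, again $t>0$, and coordinates $(T(1-x),1-x,+1)$.

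Finally, for $x=p/q$ with $\gcd(p,q)=1$: if $(\tfrac{p_1}{q_1},\tfrac{p_2}{q_2})$ is admissible, then $p/q$ lies strictly inside that Farey interval, whose fraction of least denominator is the mediant, so $q\ge q_1+q_2$ and in particular $q^2>q_1q_2$; also $|pq_i-p_iq|\ge1$. Therefore
\[
 e^{-2t}=\frac{|pq_1-p_1q|}{qq_1}\cdot\frac{|pq_2-p_2q|}{qq_2}\;\ge\;\frac1{q^2q_1q_2}\;>\;\frac1{q^4},
\]
so every intersection parameter satisfies $t<2\ln q$; since the set of intersection parameters is also bounded below (by $t^*>0$) and is discrete, it is finite, and its maximum is $<2\ln q$. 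The step I expect to need genuine care is the ``first meeting'' claim: exhibiting the matrix $\ga^*$ is easy, but ruling out every earlier intersection really requires the reformulation above together with the Stern--Brocot monotonicity, and the places where an error could slip in are the enumeration of admissible shallow ancestors and the point that $C$ must be the open cross section.
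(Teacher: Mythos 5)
Your proof is correct, and it takes a genuinely different route from the paper's. For the first-meeting claims the paper simply defers to Lemma 9.22 of Einsiedler--Ward (``essentially the same proof\dots left to the reader''), whereas you give a complete, self-contained argument: translating $\Ga u_xa(t)\in\pi(C)$ into admissibility of the Farey pair $(\ga^{-1}(0),\ga^{-1}(\infty))$ via $e^{-2t}=-(x-\ga^{-1}(0))(x-\ga^{-1}(\infty))$, $|\al|=q/q'$, $|\om|=q|x-p/q|/(q'|x-p'/q'|)$, and then using Stern--Brocot monotonicity of the product to reduce the minimization of $t$ to an enumeration of shallow ancestors. I checked the dictionary, the enumeration in both ranges, and that $\smallmat{a_1&-1\\1&0}$ and $\smallmat{b&1-b\\-1&1}$ do land on $(T(x),x,-1)$ and $(T(1-x),1-x,1)$ with $t>0$; this all holds up (note the product is in fact strictly decreasing along the chain, which also gives uniqueness of the minimizing time). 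For the bound $t\le 2\ln q$ the paper argues quite differently: it computes $\Ga u_{p/q}a(2\ln q)=\Ga u_{p'/q}\smallmat{0&-1\\1&0}$, a vector with forward endpoint $\infty$ whose forward orbit therefore never returns to $\pi(C)$. Your direct estimate $e^{-2t}\ge (qq_1qq_2)^{-1}>q^{-4}$ via the mediant inequality $q\ge q_1+q_2$ is equally valid, avoids the duality computation, and gives the strict inequality $t<2\ln q$. The $T^2(x)=T(1-x)$ computations are equivalent.

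One substantive point in your favour: your parenthetical about the cross-section being open is not pedantry. With the paper's definition $C_+=\set{\al\le-1<0<\om<1}$, the matrix $\smallmat{1&0\\-1&1}$ really does produce a crossing at the time with $e^{-2t}=x(1-x)$, where $\al=-1$ and $\om=x/(1-x)\in(0,1)$ for $x<1/2$ (and symmetrically $\smallmat{1&-1\\1&0}$ for $x>1/2$); this precedes the asserted first crossing, and the corresponding point $(x/(1-x),\,1-x,\,+1)$ lies on the boundary locus $z=1/(1+y)$, which the paper's $\tilde Y$ includes. So the first-meeting claim as literally stated is off by one crossing unless $C$ is taken with $|\al|>1$, exactly as you observe. (Since $T_C$ maps that boundary point to $(T(x),x,-1)$, the discrepancy only shifts the crossing count by one and is harmless downstream, but your reading with the open cross-section is the one under which the lemma is true.)
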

\begin{proof}
The proof of the statements involving the first meeting points is essentially the same as the proof of 
Lemma 9.22 in \cite{einsiedler_ergodic_2010} and we leave it to the reader. 
For the statement involving the last meeting time, we note that $\Gamma u_{p/q}a(2\ln(q))=\Gamma u_{p'/q}\left(\begin{smallmatrix}0&-1\\1&0\end{smallmatrix}\right)$, where $pp'\equiv_q 1$ which as a point in $\bH$ is in the standard fundamental domain which points directly up to the cusp, hence its forward orbit doesn't pass through $\pi(C)$.

For the second result, let $0<x<\frac{1}{2}$, so that $x=[0;a_1,a_2,a_3,...]$ with $a_1\geq 2$. We claim that 
$y=[0;1,a_1-1,a_2,a_3,...]$ is equal to $1-x$. Indeed, the c.f.e of $y$ implies that 
$$y=\frac{1}{1+\frac{1}{a_1-1+T(x)}}=\frac{1}{1+\frac{1}{-1+1/x}}=1-x.$$
\end{proof}
The next step is to push measures on $X_2$ to measures on $[0,1]$ and we do it
by lifting functions on $\left[0,1\right]$
to functions on $\SL_{2}\left(\ZZ\right)\backslash\SL_{2}\left(\RR\right)$.
The idea is to define the function first on $\pi(C)$
and to thicken it along the $A$-orbits since $\pi(C)$
has zero measure.
\begin{definition}\label{def:lift_func}
Let $r_*=\frac{1}{2}{\displaystyle \inf_{g\in\pi(C)}}r_{C}(g)>0$. 
For a function $f:\left[0,1\right]\to\RR$ we define $\tilde{f}:X_2 \to\RR$
as follows:
\[
\tilde{f}\left(g\right)=\begin{cases}
\frac{1}{r_*} f(\left|\omega\left(g_{0}\right)\right|) & g=g_{0}a\left(t\right)\;s.t.\;g_{0}\in\pi(C)\;\text{and }0<t< r_*\\
0 & else
\end{cases}
\]
\end{definition}


In general, given a probability measure $\mu$ on $X_2$, we would like to define a measure $\nu$ on $[0,1]$ by setting
$\nu(f):=\mu(\tilde{f})$ for any continuous function $f$. The problem is that $\mu(\tilde{f})$ is not well defined since $\tilde{f}$ is not continuous with compact support. Fortunately, when $\mu=\delta_x^{[0,R]}$ is any partial orbit measure, $\mu(f)$ is well defined and we obtain the following.
\begin{definition}
For a rational $s=\frac{p}{q}\in \bQ$ in reduced form we denote by $\len(p/q)$ the first integer $i$ such that $T^i(p/q)=0$. We define the two measures:
$$ \nu_{p/q}=\frac{1}{\len(p/q)}\sum _{i=0}^{\len(p/q)-1} \delta_{T^i(p/q)}\; ; \;  
\tilde{\nu}_{p/q}=\frac{1}{2\ln(q)}\sum _{i=0}^{\len(p/q)-1} \delta_{T^i(p/q)}$$
\end{definition}
\begin{lemma}\label{lem:pushing_measures}
For any $p\in \inv{q}$ with $q>2$ and $p\ne 1, q-1$, for any $f:[0,1]\to \bR$ we have that $|\delta^{[0,2\ln(q)]}_{p/q}(\tilde{f})-\tilde{\nu}_{p/q}(f)|<\frac{2}{\ln(q)}\norm{f}_\infty$
\end{lemma}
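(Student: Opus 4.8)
The plan is to unwind the definition of $\tilde f$ along the divergent orbit $t\mapsto x_0u_{p/q}a(t)$, turning $\delta^{[0,2\ln(q)]}_{p/q}(\tilde f)$ into a finite sum of values of $f$ at Gauss iterates of $p/q$ — each with weight $\tfrac{1}{2\ln(q)}$ — and then to compare this sum with $\tilde\nu_{p/q}(f)=\tfrac{1}{2\ln(q)}\sum_{i=0}^{\len(p/q)-1}f(T^i(p/q))$; the two will differ only in a bounded number of boundary terms, each of size at most $\tfrac{1}{2\ln(q)}\norm{f}_\infty$.

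First I would pin down how the orbit visits the cross‑section $\pi(C)$. Put $m=1$ if $0<p/q<\tfrac12$ and $m=2$ if $\tfrac12<p/q<1$ (the value $\tfrac12$ is excluded since $q>2$, and the hypotheses $p\ne1$, $p\ne q-1$ exactly rule out $p/q=1/n$ and $p/q=1-1/n$, so Lemma~\ref{lem:first_meet} applies). By Lemma~\ref{lem:first_meet} the orbit first meets $\pi(C)$ at some time $t_1\ge0$, at a point whose $y$‑coordinate (that is, $|\omega|$) equals $T(p/q)=T^m(p/q)$ when $m=1$ and $T(1-p/q)=T^2(p/q)=T^m(p/q)$ when $m=2$. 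By Lemma~\ref{lem:return_map} the first return map carries a point with $y$‑coordinate $y$ to one with $y$‑coordinate $T(y)$, so the successive meeting points have $y$‑coordinates $T^m(p/q),T^{m+1}(p/q),\dots$, and, since the return map ceases to be defined precisely when the next Gauss iterate would be $0$, the meetings terminate at the $y$‑value $T^{L-1}(p/q)$, where $L:=\len(p/q)$. Thus the orbit meets $\pi(C)$ at exactly $k:=L-m$ times $0\le t_1<\cdots<t_k\le2\ln(q)$ (the upper bound again by Lemma~\ref{lem:first_meet}), the $j$‑th meeting point having $y$‑coordinate $T^{m+j-1}(p/q)$; moreover consecutive meeting times differ by at least $\inf_{g\in\pi(C)}r_C(g)=2r_*$.

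Next I would compute $\int_0^{2\ln(q)}\tilde f(x_0u_{p/q}a(t))\,dt$. By Definition~\ref{def:lift_func}, $\tilde f(x_0u_{p/q}a(t))$ vanishes unless $t\in(t_j,t_j+r_*)$ for some $j$, in which case it equals $\tfrac1{r_*}f\big(T^{m+j-1}(p/q)\big)$; here one invokes that visits of any orbit to $\pi(C)$ are $\ge2r_*>r_*$ apart, which both makes the representation $g=g_0a(s)$ with $g_0\in\pi(C)$, $0<s<r_*$ unique and makes the windows $(t_j,t_j+r_*)$ pairwise disjoint. For $j<k$ one has $t_j+r_*<t_{j+1}\le2\ln(q)$, so $(t_j,t_j+r_*)\subseteq[0,2\ln(q)]$ and this window contributes exactly $f(T^{m+j-1}(p/q))$, while the last window $(t_k,t_k+r_*)$ may be clipped by the endpoint $2\ln(q)$ and so contributes $c_k f(T^{m+k-1}(p/q))$ with $c_k\in(0,1]$. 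Using $L=m+k$ to expand $\tilde\nu_{p/q}(f)$ and subtracting then gives
\begin{align*}
\Big|\delta^{[0,2\ln(q)]}_{p/q}(\tilde f)-\tilde\nu_{p/q}(f)\Big|
&=\frac{1}{2\ln(q)}\Big|(c_k-1)f\big(T^{m+k-1}(p/q)\big)-\sum_{i=0}^{m-1}f\big(T^i(p/q)\big)\Big|\\
&\le\frac{(1+m)\norm{f}_\infty}{2\ln(q)}\le\frac{3\norm{f}_\infty}{2\ln(q)}<\frac{2\norm{f}_\infty}{\ln(q)},
\end{align*}
since $|c_k-1|\le1$ and $m\le2$ (the degenerate possibility $k=0$, which in fact never occurs once $p\ne1,q-1$, only makes the estimate easier).

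I expect the main obstacle to be the combinatorial bookkeeping of the second step: determining precisely which Gauss iterates of $p/q$ occur as $|\omega|$‑coordinates of the meeting points — the shift by $m\in\{1,2\}$ from the first‑meeting formula of Lemma~\ref{lem:first_meet} and the termination at $T^{L-1}(p/q)$ — together with the verification that $\tilde f$ is genuinely the stated constant on each window $(t_j,t_j+r_*)$, which rests on the fact, built into the choice $r_*=\tfrac12\inf_{g\in\pi(C)}r_C(g)$, that two visits of any orbit to $\pi(C)$ are more than $r_*$ apart. The remaining estimates are routine, and the hypotheses $q>2$, $p\ne1,q-1$ enter only to make Lemma~\ref{lem:first_meet} applicable and to keep $\ln(q)>0$.
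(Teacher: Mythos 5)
Your proposal is correct and follows essentially the same route as the paper: both identify the successive meeting times of the orbit with the cross-section $\pi(C)$, use Lemmas~\ref{lem:return_map} and~\ref{lem:first_meet} to recognize the $y$-coordinates of the meeting points as the Gauss iterates $T^m(p/q),\dots,T^{\len(p/q)-1}(p/q)$ with $m\in\{1,2\}$, and bound the discrepancy by the at most $m$ missing initial terms plus the possibly clipped window at the endpoint, each contributing at most $\norm{f}_\infty/(2\ln q)$. Your bookkeeping of the boundary contributions is in fact slightly more explicit than the paper's, but the argument is the same.
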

\begin{proof}
Let $t_1<t_2<t_3<\cdots <t_n$ be the times in which the partial orbit $\Gamma u_{p/q} a(t),t\in [0,2\ln(q)]$ meets $\pi(C)$ and set $\bar{g}_i=(y_i ,z_i, \epsilon_i)\in \pi(C)$ to be the corresponding points. It then follows that
$$|\delta_{p/q}^{[0,2\ln(q)]} (\tilde{f}) - \frac{1}{2\ln(q)} \sum_1^{n} f(y_i)|\leq 2\frac {\norm{f}_\infty}{2\ln(q)}.$$

By Lemma~\ref{lem:return_map}, we have that $y_{i+1} = T^i(y_1)$ for all $1\leq i \leq n-1$ and by Lemma~\ref{lem:first_meet} we have that $y_1$ is either $T(\frac{p}{q})$ when $\frac{p}{q}<\frac{1}{2}$ or $T(1-\frac{p}{q})=T^2(\frac{p}{q})$ when $\frac{p}{q}>\frac{1}{2}$, so in any case the $y_i$ are in the $T$-orbit of $\frac{p}{q}$. Finally, Lemma~\ref{lem:first_meet} also tells us that $y_n$ is the last point in the $T$-orbit of $\frac{p}{q}$, so we conclude that 
{\small
$$|\delta^{[0,2\ln(q)]}_{p/q}(\tilde{f})-\tilde{\nu}_{p/q}(f)|=|\delta_{p/q}^{[0,2\ln(q)]} (\tilde{f}) - \frac{1}{2\ln(q)} \sum_0^{\len(p/q)-1} f(T^i\left(\frac{p}{q}\right))|\leq \frac{2}{\ln(q)}\norm{f}_\infty.$$
}
\end{proof}
\begin{remark}
We note that while $\tilde{\nu}_{p/q}$ appear ``naturally", they are not probability measures. Once we show that such a sequence of measures converge to the probability measure $\nuga$, we immediately get that their probability normalization, namely $\nu_{p/q}$, also converge to $\nuga$.
\end{remark}
\begin{lemma}\label{lem:haar_to_gauss}
Let $p_i \in \inv{q_i}$ such that $\delta_{p_i /q_i}^{[0,2\ln(q_i)]}\wstar\mu_{Haar}$. Then $\tilde{\nu}_{p_i/q_i}\wstar 2\ln(2)\kappa \nuga$ and therefore $\frac{\len(p_i/q_i)}{2\ln(q_i)}\to 2\ln(2)\kappa$ and $\nu_{p_i/q_i}\wstar \nuga$.
\end{lemma}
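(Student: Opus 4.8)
The plan is to test everything against the lift $\tilde f$ (Definition~\ref{def:lift_func}) of a fixed $f\in C([0,1])$, and to combine two inputs: the explicit evaluation of $\mu_{Haar}(\tilde f)$ coming from the cross-section formula~\eqref{eq:orbit_to_haar}, and the estimate $\left|\delta_{p/q}^{[0,2\ln(q)]}(\tilde f)-\tilde\nu_{p/q}(f)\right|\le\frac{2}{\ln(q)}\norm f_\infty$ from Lemma~\ref{lem:pushing_measures}. So the three steps are: (i) compute $\mu_{Haar}(\tilde f)$; (ii) show $\delta_{p_i/q_i}^{[0,2\ln(q_i)]}(\tilde f)\to\mu_{Haar}(\tilde f)$ despite $\tilde f$ not being continuous with compact support; (iii) transfer back to $\tilde\nu_{p_i/q_i}$ and then rescale to $\nu_{p_i/q_i}$.

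For step (i) I would plug $\tilde f$ into~\eqref{eq:orbit_to_haar}. The key observation is that along the ray $t\mapsto(y,z,\epsilon)a(t)$ issuing from a point of $\pi(C)\simeq\tilde Y$, the point $(y,z,\epsilon)a(t)$ lies in the ``flow box'' $\{g_0a(s):g_0\in\pi(C),\ 0<s<r_*\}$ precisely for $0<t<r_*$ --- this uses $r_*\le\frac12\inf_{\pi(C)}r_C$, so that the gaps between successive visits to $\pi(C)$ exceed $r_*$ and hence the only admissible base point of $(y,z,\epsilon)a(t)$, $0<t<r_*$, is $(y,z,\epsilon)$ itself --- and on that range $\tilde f((y,z,\epsilon)a(t))=\frac1{r_*}f(\av{\omega(y,z,\epsilon)})=\frac1{r_*}f(y)$, while $\tilde f((y,z,\epsilon)a(t))=0$ for $r_*\le t<r_C(y,z,\epsilon)$. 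Thus the inner integral in~\eqref{eq:orbit_to_haar} equals $f(y)$, and since the reference measure on $\tilde Y$ is Lebesgue measure on $\{(y,z):0<y<1,\ 0<z\le(1+y)^{-1}\}$ times counting measure on $\{\pm1\}$,
\[
\mu_{Haar}(\tilde f)=\kappa\int_{\tilde Y}f(y)\,\mathrm{d}m_{Leb}=2\kappa\int_{0}^{1}\frac{f(y)}{1+y}\,\dy=2\ln(2)\kappa\,\nuga(f),
\]
the last equality being the definition $\nuga=((1+s)\ln2)^{-1}\,ds$.

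Step (ii) is the technical heart. The obstacle is that $\tilde f\notin C_c(X_2)$: it jumps across $\pi(C)$ and across $\pi(C)a(r_*)$ (and possibly across the frontier of $\pi(C)$). I would get around this using three facts. First, $\tilde f$ is bounded by $\norm f_\infty/r_*$, and it is continuous at every point outside the boundary of the closed flow box, which is contained in a finite union of $a(t)$-translates of $\pi(C)$ together with the $[0,r_*]$-thickening of the frontier of $\pi(C)$ --- all contained in submanifolds of positive codimension, hence $\mu_{Haar}$-null; so $\tilde f$ is continuous $\mu_{Haar}$-almost everywhere. Second, since $\delta_{p_i/q_i}^{[0,2\ln(q_i)]}\wstar\mu_{Haar}$ and all of these are probability measures, the sequence $\{\delta_{p_i/q_i}^{[0,2\ln(q_i)]}\}$ is tight (no escape of mass): choosing $\varphi\in C_c(X_2)$ with $0\le\varphi\le1$ and $\mu_{Haar}(\varphi)$ close to $1$ forces $\delta_{p_i/q_i}^{[0,2\ln(q_i)]}(X_2\setminus\supp\varphi)$ small for large $i$. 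Given boundedness, $\mu_{Haar}$-a.e.\ continuity and tightness, a routine sandwiching argument --- approximate $\tilde f$ between its lower and upper semicontinuous envelopes, which coincide off the $\mu_{Haar}$-null discontinuity set, cut down to a large compact set using tightness, and apply the Portmanteau inequalities there --- gives $\delta_{p_i/q_i}^{[0,2\ln(q_i)]}(\tilde f)\to\mu_{Haar}(\tilde f)=2\ln(2)\kappa\,\nuga(f)$.

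Finally, for step (iii): Lemma~\ref{lem:pushing_measures} applies for all large $i$ (we have $q_i\to\infty$, and the hypothesis rules out $p_i\in\{1,q_i-1\}$ for large $i$, since for such $p_i$ the measure $\delta_{p_i/q_i}^{[0,2\ln(q_i)]}$ keeps a fixed positive proportion of its mass arbitrarily deep in the cusp and hence cannot converge to $\mu_{Haar}$), so combining it with step (ii) yields $\tilde\nu_{p_i/q_i}(f)\to2\ln(2)\kappa\,\nuga(f)$ for every $f\in C([0,1])$, i.e.\ $\tilde\nu_{p_i/q_i}\wstar2\ln(2)\kappa\,\nuga$. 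Taking $f\equiv1$ gives $\frac{\len(p_i/q_i)}{2\ln(q_i)}=\tilde\nu_{p_i/q_i}(1)\to2\ln(2)\kappa$. Since $\nu_{p_i/q_i}=\frac{2\ln(q_i)}{\len(p_i/q_i)}\,\tilde\nu_{p_i/q_i}$ and the scalar $\frac{2\ln(q_i)}{\len(p_i/q_i)}$ converges to $(2\ln(2)\kappa)^{-1}\neq0$, multiplying the weak-$*$ limit of $\tilde\nu_{p_i/q_i}$ by this scalar gives $\nu_{p_i/q_i}\wstar\nuga$, completing the proof. The only genuinely delicate point is the justification in step (ii) that $\tilde f$ may be integrated against the weak-$*$ convergent sequence, which is where boundedness, $\mu_{Haar}$-a.e.\ continuity, and the absence of escape of mass all enter.
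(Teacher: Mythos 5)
Your proof is correct and follows essentially the same route as the paper: both rest on the cross-section formula \eqref{eq:orbit_to_haar}, the comparison Lemma~\ref{lem:pushing_measures}, the fact that the boundary of the flow box is $\mu_{Haar}$-null, and the final rescaling via $f\equiv1$. The only (immaterial) difference is that the paper first proves convergence for indicator functions of intervals using the set form of the Portmanteau theorem and then passes to general continuous $f$ by step-function approximation together with the uniform bound $\len(p/q)\le 2\log_2(q)$, whereas you apply the function form of Portmanteau (bounded, $\mu_{Haar}$-a.e.\ continuous test functions integrated against a tight weak* convergent sequence) directly to $\tilde f$.
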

\begin{proof}
Given a segment $I\subseteq [0,1]$ with endpoints $0\leq a<b\leq 1$, we have that $\tilde {\chi}_I = \frac{1}{r} \chi_{\Omega_I}$ where 
\[
\Omega_I=\{ \bar{g}_{0}a(t)\in X:\bar{g}_{0}\in\pi(C)\;,\;0< t< r_*,\;|\omega(g_0)|\in I\}.
\]
The boundary of this set is contained in $F_1\cup F_2\cup F_3 \cup F_4\cup F_5$, where 
\begin{align*}
F_1&= \pi(C),\\
F_2&=\pi(C)a(r_*),\\ 
F_3&= \set{\pi(g)a(t) : g\in A\cdot \SO_2(\bR),  t\in[0,r_*] , |\omega(g)|\in\set{0,1}},\\
F_4&=\set{\pi(g)a(t) : g\in A\cdot \SO_2(\bR),  t\in[0,r_*] , |\alpha(g)|\in1},\\
F_5&=\set{\pi(g)a(t) : g\in A\cdot \SO_2(\bR),  t\in[0,r_*] , |\omega(g)|\in\set{a,b}}.
\end{align*}
In any case this is a null set for $\mu_{Haar}$. 
Since $\del_{p_i /q_i}^{[0,2\ln(q_i)]}\wstar\mu_{Haar}$, for any measurable $B$ with boundary which is $\mu_{Haar}$-null, 
we have $\delta_{p_i /q_i}^{[0,2\ln(q_i)]}(B)\to \mu_{Haar}(B)$ and in particular,
{\small
$$\delta_{p_i /q_i}^{[0,2\ln(q_i)]}(\Omega_I) \to \mu_{Haar}(\Omega_I)=\kappa\int_{(y,z,\epsilon)\in Y}\int_0^{r_C(y,z,\epsilon)} \chi_{\Omega_I} \dmu_{Leb}
=2r_*\kappa\int_a^b \frac{1}{1+s} \mathrm{ds}.$$}
Applying Lemma~\ref{lem:pushing_measures}, we obtain that $\tilde{\nu}_{p_i/q_i}(\chi_I)\to 2\ln(2)\kappa \nuga(\chi_I)$. This result can be extended to any $f\in C[0,1]$ by noting that (1) each such $f$ can be approximated by step function and (2) the measures $\tilde{\nu}_{p/q}$ are uniformly bounded (this follows from the fact that $\len(p/q)\leq 2\log_2(q)$).

Now that we have that $\tilde{\nu}_{p_i/q_i}\wstar 2\ln(2)\kappa \nuga$, evaluating at the constant function 1 produces $\frac{\len(p_i/q_i)}{2\ln(q_i)}\to 2\ln(2)\kappa$ which in turn implies that $\nu_{p_i/q_i}= \frac{2\ln(q_i)}{\len(p_i/q_i)}\tilde{\nu}_{p_i/q_i}\wstar \nuga$.
\end{proof}

\begin{proof}[Proof of Theorem ~\ref{thm:main_app}]
By Corollary~\ref{cor:almost_all_converge}, there exist sets $W_q\subseteq \inv{q}$ with $\displaystyle{\lim_{q\to \infty}} \frac{|W_q|}{\varphi(q)}=1$, such that for any choice  of $p_q \in W_q$ we have that $\delta_{p_q/q}^{[0,2\ln(q)]} \wstar \mu_{Haar}$. Without 
loss of generality we may assume that $1,q-1\notin W_q$ (this assumption is not really necessary as this follows automatically since 
$\del_{1/q}^{[0,2\ln(q)]}, \del_{1/q}^{[0,2\ln(q)]}$ cannot converge to $\mu_{Haar}$). The computation $\kappa=\frac{1}{2\zeta(2)}$ will be done in Theorem~\ref{thm:kappa} below, hence
applying Lemma~\ref{lem:haar_to_gauss} we obtain that $\frac{\len(p_q/q)}{2\ln(q)}\to \frac{\ln(2)}{\zeta(2)}$ and $\nu_{p_q/q}\wstar \nuga$ for such sequences.
\end{proof}

Finally, we compute the value of $\kappa$. One way of doing it is to note that we already know that 
$\frac{1}{\varphi(q)}\sum_{p\in \inv{q}}\frac{\len(p/q)}{2\ln(q)}\to 2\ln(2)\kappa$. This limit was computed 
by Heilbronn in \cite{heilbronn_average_1969} which showed that $\kappa=\frac{3}{\pi^2}=\frac{1}{2\zeta(2)}$.
A direct computation using the return time map is done in the following theorem.
\begin{theorem}\label{thm:kappa}
In Equation~\ref{eq:orbit_to_haar} the constant $\kappa$ is equal to $\frac{3}{\pi^2}=\frac{1}{2\zeta(2)}$.
\end{theorem}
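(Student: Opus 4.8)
\textbf{Reduction to an integral of the return time.} Since $\mu_{Haar}$ is a probability measure, applying \eqref{eq:orbit_to_haar} to the constant function $1$ (justified either by taking $f_j\nearrow 1$ with $f_j\in C_c(X_2)$ and monotone convergence, or directly, as \eqref{eq:orbit_to_haar} is the integrated form of the equality of measures $\kappa\,\theta_*(\mathrm{dm})=\mu_{Haar}$), and using Fubini on $\hat Y=\{(\bar g,t):0<t<r_C(\bar g)\}$, gives
\[
1=\mu_{Haar}(X_2)=\kappa\int_{(y,z,\epsilon)\in\tilde Y} r_C(y,z,\epsilon)\,\dmu_{Leb}.
\]
In particular this integral is finite, and $\kappa=\bigl(\int_{\tilde Y}r_C\,\dmu_{Leb}\bigr)^{-1}$, so it suffices to show $\int_{\tilde Y}r_C\,\dmu_{Leb}=\pi^2/3=2\zeta(2)$.

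\textbf{An explicit formula for $r_C$.} Fix $\bar g=(y,z,+1)\in\pi(C)\cong\tilde Y$ and set $a_1=\flr{1/y}$. In these coordinates $\bar g$ has endpoints $\omega_0=y$ and $\alpha_0=y-1/z$, which determine the geodesic semicircle and the point $\bar g\cdot i\in i\RR$ where it crosses the imaginary axis; thus $\bar g$ is given by an explicit matrix of $A\cdot\SO_2(\RR)$. By Lemma~\ref{lem:return_map} the first return is $T_C(y,z,+1)=(T(y),y(1-yz),-1)$, and one checks that $\ga=\smallmat{a_1 & -1\\ 1 & 0}\in\Ga$ carries $\omega_0,\alpha_0$ to the endpoints of $T_C(\bar g)$. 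Solving the equation $\ga\bar g\,a(t)\cdot i\in i\RR$ for $t>0$ (a two--line computation in $\PSL_2(\RR)$, the base point lying on $i\RR$ exactly when the relevant off--diagonal product of entries vanishes) yields
\[
r_C(y,z,\epsilon)=\tfrac12\ln\!\Bigl(1+\tfrac{\flr{1/y}}{y\,z\,T(y)}\Bigr),
\]
the value being independent of $\epsilon$ because conjugation by $\diag{-1,1}$ (reflection of $\HH$ through the imaginary axis) commutes with $a(t)$, preserves $\PSL_2(\ZZ)$, interchanges $C_+$ and $C_-$, and fixes the $(y,z)$--coordinates. (As a check, specializing to the points $(T(x),x,-1)$ produced by a rational $x=p/q$ reproduces, through the continued fraction expansion of $p/q$, the times implicit in Lemma~\ref{lem:first_meet}.)

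\textbf{Computing the integral.} On $y\in(\tfrac1{n+1},\tfrac1n)$ one has $a_1=n$ and $yT(y)=1-ny$, so with $c=\tfrac1{1+y}$, $b=\tfrac{n}{1-ny}$, $c+b=\tfrac{n+1}{(1+y)(1-ny)}$ the inner $z$--integral is elementary:
\[
\int_{\tilde Y}r_C\,\dmu_{Leb}=\int_0^1\bigl[(c+b)\ln(c+b)-b\ln b-c\ln c\bigr]\,dy .
\]
Integrating $-c\ln c=\tfrac{\ln(1+y)}{1+y}$ over $(0,1)$ gives $\tfrac12\ln^2 2$; expanding $(c+b)\ln(c+b)-b\ln b$ using $c+b=\tfrac1{1+y}+\tfrac{n}{1-ny}$ and substituting $v=ny$ on each $(\tfrac1{n+1},\tfrac1n)$, the logarithmically divergent parts cancel and one is left with
\[
\int_{\tilde Y}r_C\,\dmu_{Leb}=\sum_{n\ge1}\ln(n{+}1)(d_n-d_{n+1})+\sum_{n\ge1}\bigl(Q_n-P_n\bigr),\qquad d_k:=\ln\tfrac{k+1}{k},
\]
where $Q_n=\int_{n/(n+1)}^{1}\tfrac{1}{1-v}\ln\tfrac{n+1}{n+v}\,dv$ and $P_n=\int_{n/(n+1)}^{1}\tfrac{\ln(1-v)}{n+v}\,dv$. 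The substitution $s=1-v$ followed by $t=s/(n{+}1)$ and the definition $\mathrm{Li}_2(x)=\int_0^x\tfrac{-\ln(1-t)}{t}\,dt$ give $Q_n=\mathrm{Li}_2\!\bigl(\tfrac1{(n+1)^2}\bigr)$, while the same substitution together with $\int\tfrac{\ln u}{1-u}\,du=\mathrm{Li}_2(1-u)$ gives $P_n=\ln(n{+}1)(d_n-d_{n+1})+\mathrm{Li}_2\!\bigl(\tfrac{n(n+2)}{(n+1)^2}\bigr)-\tfrac{\pi^2}{6}$. The two occurrences of $\ln(n{+}1)(d_n-d_{n+1})$ cancel, leaving
\[
\int_{\tilde Y}r_C\,\dmu_{Leb}=\sum_{n\ge1}\Bigl[\mathrm{Li}_2\!\bigl(\tfrac1{(n+1)^2}\bigr)-\mathrm{Li}_2\!\bigl(\tfrac{n(n+2)}{(n+1)^2}\bigr)+\tfrac{\pi^2}{6}\Bigr].
\]

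\textbf{Collapsing the dilogarithm series.} Since $\tfrac{n(n+2)}{(n+1)^2}=1-\tfrac1{(n+1)^2}$, Euler's reflection $\mathrm{Li}_2(x)+\mathrm{Li}_2(1-x)=\tfrac{\pi^2}{6}-\ln x\ln(1-x)$ turns the $n$-th summand into $2\,\mathrm{Li}_2\!\bigl(\tfrac1{(n+1)^2}\bigr)+2\ln(n{+}1)(d_n-d_{n+1})$. Now apply the duplication formula $\mathrm{Li}_2(x^2)=2\mathrm{Li}_2(x)+2\mathrm{Li}_2(-x)$ to the first term and Landen's identity $\mathrm{Li}_2\!\bigl(\tfrac{-x}{1-x}\bigr)=-\mathrm{Li}_2(x)-\tfrac12\ln^2(1-x)$ with $x=\tfrac1{k+1}$, which gives $d_k^2=-2\,\mathrm{Li}_2\!\bigl(\tfrac1{k+1}\bigr)-2\,\mathrm{Li}_2\!\bigl(-\tfrac1k\bigr)$; using also $\sum_{n}\ln(n{+}1)(d_n-d_{n+1})=\sum_k d_k^2$ (Abel summation, legitimate because $d_k=O(1/k)$ and $\ln(N)\,d_{N+1}\to 0$), everything reduces, at the level of partial sums, to the telescoping identities
\[
\sum_{m=2}^{N}\mathrm{Li}_2\!\bigl(\tfrac1m\bigr)-\sum_{k=1}^{N}\mathrm{Li}_2\!\bigl(\tfrac1{k+1}\bigr)=-\mathrm{Li}_2\!\bigl(\tfrac1{N+1}\bigr)\to 0,\qquad
\sum_{m=2}^{N}\mathrm{Li}_2\!\bigl(-\tfrac1m\bigr)-\sum_{k=1}^{N}\mathrm{Li}_2\!\bigl(-\tfrac1k\bigr)=-\mathrm{Li}_2(-1),
\]
whence $\int_{\tilde Y}r_C\,\dmu_{Leb}=4\bigl(-\mathrm{Li}_2(-1)\bigr)=4\cdot\tfrac{\pi^2}{12}=\tfrac{\pi^2}{3}=2\zeta(2)$. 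Therefore $\kappa=\tfrac1{2\zeta(2)}=\tfrac{3}{\pi^2}$.

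The geometric input (the closed form of $r_C$) is routine once the cross--section coordinates are set up. The real work, and the main place to be careful, is the bookkeeping in the third and fourth steps: each of the intermediate series $\sum_m\mathrm{Li}_2(\pm\tfrac1m)$ diverges, so every telescoping must be organised at the level of partial sums, and one must track precisely which dilogarithm arguments occur so that the reflection, duplication and Landen substitutions line up to produce the single surviving constant $\mathrm{Li}_2(-1)$.
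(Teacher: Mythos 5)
Your proof is correct, and the first half coincides with the paper's: both reduce $\kappa^{-1}$ to $\int_{\tilde Y}r_C\,\dmu_{Leb}$ and both compute the return time explicitly (your closed form $r_C=\tfrac12\ln\bigl(1+\tfrac{\flr{1/y}}{yzT(y)}\bigr)$ is algebraically identical to the paper's expression $-2\ln y-\tfrac12\ln(\tfrac zy(1-yz))+\tfrac12\ln(\tfrac{z'}{y'}(1-y'z'))$, using $yT(y)=1-\flr{1/y}y$). Where you diverge is in evaluating the integral. The paper exploits the structure of its formula: $r_C=-2\ln y+h(y',z')-h(y,z)$ with $h(y,z)=\tfrac12\ln(\tfrac zy(1-yz))$, and since the return map $(y,z)\mapsto(y',z')$ preserves Lebesgue measure on the cross-section, the coboundary integrates to zero and the whole computation collapses to $-4\kappa\int_0^1\tfrac{\ln y}{1+y}\,dy=4\kappa\cdot\tfrac{\pi^2}{12}$ — three lines. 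You instead integrate $r_C$ head-on, which forces the interval-by-interval substitution $v=ny$, the identities $Q_n=\mathrm{Li}_2(\tfrac1{(n+1)^2})$ and $P_n=\ln(n{+}1)(d_n-d_{n+1})+\mathrm{Li}_2(\tfrac{n(n+2)}{(n+1)^2})-\tfrac{\pi^2}{6}$, and then reflection, duplication and Landen to collapse the series to $-4\,\mathrm{Li}_2(-1)$. I checked the intermediate steps (including the cancellation of the $\tfrac12\ln^2 2$ contribution of $-c\ln c$ against the telescoped $\tfrac12\sum(d_{n+1}^2-d_n^2)$ hidden in your "divergent parts cancel" step, and the partial-sum telescoping at the end) and they are all sound. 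What your route buys is that it never invokes measure-preservation of the Poincaré return map, nor the integrability of the coboundary $h$ needed to justify $\int h\circ T_C=\int h$ — a point the paper passes over silently; what it costs is a substantially longer and more fragile computation, in which, as you note yourself, every rearrangement of the conditionally assembled dilogarithm series must be done at the level of partial sums. Your derivation of the closed form for $r_C$ is only sketched, but since it agrees with the paper's formula it is not a gap.
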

\begin{proof}
In order to find $\kappa$ we compute the return time map and then integrate over $f\equiv 1$.
Given the endpoints $\alpha<-1<0<\omega<1$ of $g$ and writing as before
$y=\epsilon\omega, z=\epsilon\frac{1}{\omega-\alpha}, \epsilon\in\{\pm 1\}$, then $g=\left(\begin{smallmatrix}
1-yz & \epsilon y\\
-\epsilon z & 1
\end{smallmatrix}\right)
\left(\begin{smallmatrix}
e^{t/2} & 0\\
0 & e^{-t/2}
\end{smallmatrix}\right)$ for some $t\in\bR$. In particular, if $g\in C_\pm \subseteq A\cdot \SO_2(\bR)$,
then the rows of $g$ are orthogonal, so that $t=-\ln(\frac{z}{y}(1-yz))/2$. Furthermore, setting $(y',z',\epsilon')=(\frac{1}{y}-\flr{\frac{1}{y}}, y(1-yz),-\epsilon)$, we obtain that 
\[
\left(\begin{array}{cc}
-\epsilon\left\lfloor \frac{1}{y}\right\rfloor  & 1\\
-1 & 0
\end{array}\right)\left(\begin{array}{cc}
1-yz & \epsilon y\\
-\epsilon z & 1
\end{array}\right)\left(\begin{array}{cc}
y & 0\\
0 & \frac{1}{y}
\end{array}\right)=-\epsilon\left(\begin{array}{cc}
1-y'z' & \epsilon'y'\\
-\epsilon'z' & 1
\end{array}\right).
\]
We conclude that $r_C (y,z,\epsilon)=-2\ln(y)-\ln(\frac{z}{y}(1-yz))/2+\ln(\frac{z'}{y'}(1-y'z'))/2$. 
It then follows that 
\[
1=2\kappa\int_0^1 \int_0^{\frac{1}{1+y}}
(-2\ln(y)-\ln(\frac{z}{y}(1-yz))/2+\ln(\frac{z'}{y'}(1-y'z'))/2)\mathrm{dz\cdot dy}.
\]
Since the map $(y,z)\mapsto (y',z')$ is measure preserving, we conclude that 
$1=-4\kappa\int_0^1 \frac{\ln(y)}{1+y} \mathrm{dy}=4\kappa\frac{\pi^2}{12}$, hence $\kappa=\frac{3}{\pi^2}=\frac{1}{2\zeta(2)}$.
\end{proof}

We finish by giving the proof that for a fixed $K$, there are very few rationals $p/q$ with $p\in \inv{q}$ such that the coefficients in their c.f.e are bounded by $K$.
\begin{proof}[Proof of Theorem~\ref{thm:Zaremba}]
Fix some $K>1$ and let 
$$\Lambda_{q,K}=\set{p\in \inv{q}:\text{the entries of the c.f.e of }\frac{p}{q}\text{ are bounded by K} }.$$
We first claim that there is some $M=M(K)>1$ such that $\delta_{p/q}^{[0,2\ln(q)]}$ is supported in $X_2^{\leq M}$
for any $p\in \Lambda_{q,K}$.
We give here an elementary proof but the reader may benefit from reviewing \cite[Section 9.6]{einsiedler_ergodic_2010} and try to establish this claim by herself.
 Let $\frac{p}{q}=[0;a_1,a_2,...,a_n]$ with $a_i\leq K$, and assume that 
$\SL_2(\bR) u_{p/q} a(t) \in X^{>M}$ for some $0\leq t\leq 2\ln(q)$. Let $\bar{0}\neq (m, n)\in \bZ^2$ such that $\norm{(m,-n)u_{p/q} a(t)}_\infty \leq \frac{1}{M}$, or equivalently $|m|\leq \frac{e^{t/2}}{M}$ and $|m\frac{p}{q}-n|\leq \frac{1}{Me^{t/2}}$. Without loss of generality, we may assume that $1\leq m\leq \frac{e^{t/2}}{M}\leq \frac{q}{M}$. Letting $\frac{p_i}{q_i}=[0;a_1,...,a_i]$ be the convergents of $\frac{p}{q}$ we have the recursion
condition $q_{i+1}=q_i a_{i+1} + q_{i-1}\leq (a_{i+1}+1)q_i$. Since $q_n=q$ we obtain that $q_{n-1}\geq \frac{q}{a_n+1}\geq \frac{q}{K+1}$, 
so $M>K+1$ implies that $m<q_{n-1}$.

Choose $k$ such that $q_{k-1}\leq m<q_k\leq q_{n-1} \neq q$. Then by the optimality of convergents 
(proposition 3.3. in \cite{einsiedler_ergodic_2010}), we get that
$|\frac{p}{q}-\frac{p_k}{q_k}|<|\frac{p}{q}-\frac{n}{m}|\leq\frac{1}{Mme^{t/2}}$. 
Furthermore, the convergents satisfy $\frac{1}{2q_{k+1}q_k}<|\frac{p}{q}-\frac{p_k}{q_k}|$ (Exercise 3.1.5
in \cite{einsiedler_ergodic_2010}), and hence
$$\frac{Me^{t/2}}{2}<\frac{q_k q_{k+1}}{m}\leq \frac{(a_{k+1}+1)(a_k+1)^2q^2_{k-1}}{m}
\leq (K+1)^3m\leq (K+1)^3 \frac{e^{t/2}}{M}.$$ 
It follows that $M^2<2(K+1)^3$, and therefore the support of $\delta_{p/q}^{[0,2\ln(q)]}$ 
must be contained in $X^{\leq 2(K+1)^2}$.

By the claim that we just proved, the probability measures $\delta_{\Lambda_{q,K}}^{[0,2\ln(q)]}$ 
are all supported in the compact set $X^{\leq 2(K+1)^2}$ so in particular they 
do not exhibit escape of mass. If we also knew that $\frac {\ln|\Lambda_{q,K}|}{\ln(q)} \to 1$,
then applying Theorem~\ref{thm:mainug}, we conclude that $\delta_{\Lambda_{q,K}}^{[0,2\ln(q)]}$ 
converges to the Haar probability measure, but the limit must also be supported on $X^{\leq 2(K+1)^2}$
- contradiction. It follows that $\limsup \frac {\ln|\Lambda_{q,K}|}{\ln(q)}<1$ or equivalently
$|\Lambda_{q,K}|=o(q^{1-\varepsilon})$ for some $\varepsilon>0$.

\end{proof}

\appendix

\section{\label{app:Working-with-balls}the proof of Lemma~\ref{lem:bowen_control}}
Before we give the proof, we need some results about hyperbolic balls. Recall from Definition~\ref{def:balls} that
for $H\leq SL_{2}\left(\RR\right)$, we define the $H$-balls $B_{r}^{H}=\left\{ I+W\in H:\norm W_{\infty}<r\right\} $.
In particular we have
\begin{align*}
B_{r}^{U^{+}}&=\left\{ I+\alpha E_{1,2}:\left|\alpha\right|<r\right\} \\
B_{r}^{U^{-}A}&=\left\{ I+W\in SL_{2}\left(\RR\right):W_{1,2}=0,\;\left|W_{i,j}\right|<r\right\}.
\end{align*} 
We further write $B_{\eta,N}=B_{\eta e^{-N}} ^{U^+} B_\eta ^{U^- A}, B_\eta:=B_{\eta,0}$ and $a=\smallmat{e^{-1/2}&0\\0&e^{1/2}}$ 
(so that $a B_r^{U^+} a^{-1} = B_{r/e}^{U^+}$).
\begin{lemma}
\label{lem:BALLS}Let $H\leq G$ be any subgroup. We have the following:

\begin{enumerate}
\item $\left(B_{K}^{H}\right)^{-1}=B_{K}^{H}$.
\item $B_{K_{1}}^{H}B_{K_{2}}^{H}\subseteq B_{2\left(K_{1}+K_{2}\right)}^{H}$
whenever $K_{1},K_{2}<1$.
\item Suppose that $r^{+},r^{-}<\frac{1}{4}$. Then $B_{r^{-}}^{U^{-}A}B_{r^{+}}^{U^{+}}\subseteq B_{2r^{+}}^{U^{+}}B_{2r^{-}}^{U^{-}A}$.
\item Suppose that $r^{+},r^{-}<\frac{1}{4}$. Then $gB_{r^{+}}^{U^{+}}g^{-1}\in B_{2r^{+}}^{U^{+}}B_{6r^{-}}^{U^{-}A}$
for every $g\in B_{r^{-}}^{U^{-}A}$.
\item Suppose that $r^{+},r^{-}<\frac{1}{16}$ and $x,y\in\Gamma\backslash G$.
Then 
\[
y\in xB_{r^{+}}^{U^{+}}B_{r^{-}}^{U^{-}A}\quad\Rightarrow\quad xB_{r^{+}}^{U^{+}}B_{r^{-}}^{U^{-}A}\subseteq yB_{8r^{+}}^{U^{+}}B_{6r^{-}}^{U^{-}A}.
\]
\end{enumerate}
\end{lemma}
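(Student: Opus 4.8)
The five parts are of increasing complexity, and the natural order of proof is the order in which they are stated: (1) and (2) are elementary, (3) is the one genuine computation, and (4)--(5) follow formally from (1)--(3).

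For (1) I would use that in $\SL_2(\bR)$ the inverse is the adjugate: writing $h=I+W$ with $W=\smallmat{w_{11}&w_{12}\\ w_{21}&w_{22}}$, one gets $h^{-1}=I+W'$ with $W'=\smallmat{w_{22}&-w_{12}\\ -w_{21}&w_{11}}$, so $\norm{W'}_\infty=\norm{W}_\infty$; since $H$ is a subgroup, $h\in B_K^H$ forces $h^{-1}\in B_K^H$, giving $(B_K^H)^{-1}=B_K^H$. For (2), given $h_i=I+W_i$ with $\norm{W_i}_\infty<K_i<1$, the product is $I+(W_1+W_2+W_1W_2)$, and the triangle inequality together with $\norm{W_1W_2}_\infty\le 2\norm{W_1}_\infty\norm{W_2}_\infty$ (valid for $2\times2$ matrices) gives $\norm{W_1+W_2+W_1W_2}_\infty<(K_1+K_2)+2K_1K_2<2(K_1+K_2)$, the last step because $K_1K_2<K_2$ and $K_1K_2<K_1$ when $K_1,K_2<1$.

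The heart of the matter is (3), an explicit commutation. I would parametrize $g^-=\smallmat{\alpha&0\\ \beta&\alpha^{-1}}\in B_{r^-}^{U^-A}$ (so $|\alpha-1|,|\alpha^{-1}-1|,|\beta|<r^-$) and $u^+=\smallmat{1&s\\ 0&1}\in B_{r^+}^{U^+}$, and solve $g^-u^+=\smallmat{1&t\\ 0&1}\smallmat{\gamma&0\\ \delta&\gamma^{-1}}$ directly, finding $\delta=\beta$, $\gamma^{-1}=\alpha^{-1}+\beta s$, $\gamma=\alpha/(1+\alpha\beta s)$ and $t=\alpha^2 s/(1+\alpha\beta s)$. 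Since $r^\pm<\tfrac{1}{4}$ the denominator satisfies $|1+\alpha\beta s|>\tfrac{59}{64}$, so the factorization is legitimate; then a routine estimate (the only inequality-chasing in the whole lemma) shows $|t|<2r^+$ and $|\gamma-1|,|\gamma^{-1}-1|,|\delta|<2r^-$, which places the product in $B_{2r^+}^{U^+}B_{2r^-}^{U^-A}$. This is exactly where the constants $\tfrac{1}{4}$ and $2$ enter, and it is the main (mild) obstacle.

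Parts (4) and (5) are then bookkeeping with the inclusions (1)--(3). For (4): $g^{-1}\in B_{r^-}^{U^-A}$ by (1), so for $u^+\in B_{r^+}^{U^+}$ we have $gu^+g^{-1}\in B_{r^-}^{U^-A}B_{r^+}^{U^+}B_{r^-}^{U^-A}$; applying (3) to the first two factors lands this in $B_{2r^+}^{U^+}B_{2r^-}^{U^-A}B_{r^-}^{U^-A}$, and (2) absorbs the last two factors into $B_{6r^-}^{U^-A}$. For (5): write $y=xu_0^+g_0^-$ and an arbitrary point of $xB_{r^+}^{U^+}B_{r^-}^{U^-A}$ as $xu^+g^-$; moving $y^{-1}$ over and using (1) gives $(g_0^-)^{-1}(u_0^+)^{-1}u^+g^-$ with $(g_0^-)^{-1}\in B_{r^-}^{U^-A}$, $(u_0^+)^{-1}\in B_{r^+}^{U^+}$. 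Now (2) puts $(u_0^+)^{-1}u^+\in B_{4r^+}^{U^+}$; applying (3) to $(g_0^-)^{-1}B_{4r^+}^{U^+}$ (legitimate because $4r^+<\tfrac{1}{4}$ when $r^+<\tfrac{1}{16}$) gives $B_{8r^+}^{U^+}B_{2r^-}^{U^-A}$; and a last use of (2) collapses $B_{2r^-}^{U^-A}B_{r^-}^{U^-A}$ into $B_{6r^-}^{U^-A}$, yielding the claimed containment in $yB_{8r^+}^{U^+}B_{6r^-}^{U^-A}$. The only thing to watch in (4)--(5) is tracking which radius plays the role of ``$r^+$'' in each invocation of (3), so that the hypothesis $r^+<\tfrac{1}{16}$ is precisely what is forced.
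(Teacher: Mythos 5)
Your proposal is correct and follows essentially the same route as the paper's proof: parts (1)--(2) via the adjugate formula and the expansion $(I+W_1)(I+W_2)=I+W_1+W_2+W_1W_2$, part (3) by the same explicit factorization of $\smallmat{\alpha&0\\ \beta&\alpha^{-1}}\smallmat{1&s\\0&1}$ into a $U^+$ times $U^-A$ piece with matching entries, and parts (4)--(5) by the identical chain of invocations of (1)--(3), including the use of $B_{4r^+}^{U^+}$ and the observation that $r^+<\tfrac{1}{16}$ is exactly what makes (3) applicable there. The constants you obtain ($|1+\alpha\beta s|>\tfrac{59}{64}$ versus the paper's $\tfrac{11}{16}$ lower bound on the denominator) differ trivially but yield the same conclusions.
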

\begin{proof}
\begin{enumerate}
\item Follows from the fact that $\left(\begin{array}{cc}
a & b\\
c & d
\end{array}\right)^{-1}=\left(\begin{array}{cc}
d & -b\\
-c & a
\end{array}\right)$ for matrices of determinant 1.
\item Follows from the identity $\left(I+W_1\right)\left(I+W_2\right)=I+(W_1+W_2)+W_1 W_2$
and the fact that $\norm {W_1 W_2}_\infty \leq 2\norm {W_1}_\infty \norm{W_2}_\infty$.
\item Suppose that $\left|u\right|,\left|v\right|,\left|w\right|<r^{-}$
and $\left|x\right|<r^{+}$. Then
{\tiny
\begin{align*}
\left(\begin{array}{cc}
1+u & 0\\
v & 1+w
\end{array}\right)\left(\begin{array}{cc}
1 & x\\
0 & 1
\end{array}\right)
& =\left(\begin{array}{cc}
1 & \frac{x\left(1+u\right)}{1+w+vx}\\
0 & 1
\end{array}\right)\left(\begin{array}{cc}
1+u-\frac{x\left(1+u\right)}{1+w+vx}v & 0\\
v & 1+w+vx
\end{array}\right)
\end{align*}
}
which is in $B_{2r^{+}}^{U^{+}}B_{2r^{-}}^{U^{-}A}$.
\item Using the previous parts we get that 
\[
gB_{r^{+}}^{U^{+}}g^{-1}\subseteq B_{r^{-}}^{U^{-}A}B_{r^{+}}^{U^{+}}B_{r^{-}}^{U^{-}A}\subseteq B_{2r^{+}}^{U^{+}}B_{2r^{-}}^{U^{-}A}B_{r^{-}}^{U^{-}A}\subseteq B_{2r^{+}}^{U^{+}}B_{6r^{-}}^{U^{-}A}.
\]
\item Using the previous parts, $y=xh^{+}h^{-}$ with $h^{+}\in B_{r^{+}}^{U^{+}}$
and $h^{-}\in B_{r^{-}}^{U^{-}A}$, we have that 
\begin{align*}
xB_{r^{+}}^{U^{+}}B_{r^{-}}^{U^{-}A} & =  y\left(h^{-}\right)^{-1}\left(h^{+}\right)^{-1}B_{r^{+}}^{U^{+}}B_{r^{-}}^{U^{-}A} \subseteq yB_{r^{-}}^{U^{-}A}B_{4r^{+}}^{U^{+}}B_{r^{-}}^{U^{-}A} \\
&\subseteq yB_{8r^{+}}^{U^{+}}B_{2r^{-}}^{U^{-}A}B_{r^{-}}^{U^{-}A}\subseteq yB_{8r^{+}}^{U^{+}}B_{6r^{-}}^{U^{-}A}
\end{align*}
\end{enumerate}
\end{proof}
\begin{lemma}
\label{lem:Ball_cover} There is some constant $C$ such that for
all $0<r_{1},r_{2}$ small enough, $x\in X_2$ and $Y\subseteq xB_{r_{1}}^{U^{+}}B_{r_{2}}^{U^{-}A}$,
there are $y_{1},...,y_{C}\in Y$ such that $Y\subseteq\bigcup y_{i}B_{r_{1}/e}^{U^{+}}B_{r_{2}/e}^{U^{-}A}$.
\end{lemma}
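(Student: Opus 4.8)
The plan is to reduce the statement to a purely group-theoretic covering fact inside $\SL_2(\RR)$ and then prove that fact by a standard packing argument, using only the box algebra recorded in Lemma~\ref{lem:BALLS} together with one crude volume estimate.

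\emph{Reduction to $\SL_2(\RR)$.} Write $x=\pi(g_0)$ and set $\tilde{Y}=\pi^{-1}(Y)\cap g_0 B_{r_1}^{U^+}B_{r_2}^{U^-A}$. One checks immediately that $\pi(\tilde{Y})=Y$ and that $\pi$ intertwines the right action of the boxes, so after left-translating by $g_0^{-1}$ — which preserves Haar measure and carries $\tilde{Y}$ into $B_{r_1}^{U^+}B_{r_2}^{U^-A}$ — it suffices to prove: for every $S\subseteq B_{r_1}^{U^+}B_{r_2}^{U^-A}\subseteq \SL_2(\RR)$ with $r_1,r_2$ small, there are at most $C$ points $h_1,\ldots,h_N\in S$ with $S\subseteq\bigcup_i h_i B_{r_1/e}^{U^+}B_{r_2/e}^{U^-A}$; one then pushes this cover down by $\pi$ and takes $y_i=\pi(g_0 h_i)\in Y$. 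The point of this detour is that $\tilde{Y}$ sits inside a \emph{single} unwrapped box in $\SL_2(\RR)$, so the argument is insensitive to the injectivity radius of $X_2$ collapsing in the cusp; I regard this as a small but necessary observation rather than the crux.

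\emph{The cover in $\SL_2(\RR)$.} Abbreviate $D_r:=B_{r_1 r}^{U^+}B_{r_2 r}^{U^-A}$. From Lemma~\ref{lem:BALLS}(1)--(3) one extracts, for all sufficiently small $r_1,r_2$ and all $0<r,s\le 1$: $I\in D_r$; $D_r^{-1}\subseteq D_{2r}$; and $D_r D_s\subseteq D_{4(r+s)}$ — these inclusions hold verbatim in $\SL_2(\RR)$, being purely algebraic. Now choose $\{h_1,\ldots,h_N\}\subseteq S$ maximal subject to $h_i\notin h_j D_{1/(2e)}$ for $i\neq j$ (it is finite by the volume count below). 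Maximality together with $D_r^{-1}\subseteq D_{2r}$ forces every $g\in S$ into some $h_j D_{1/e}$, which is the desired cover; separation at scale $1/(2e)$ together with $D_{1/(24e)}D_{1/(12e)}\subseteq D_{1/(2e)}$ forces the sets $h_i D_{1/(24e)}$ to be pairwise disjoint, and each of them lies in $S D_{1/(24e)}\subseteq D_8$.

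\emph{The count.} This follows from the estimate $m(B_a^{U^+}B_b^{U^-A})\asymp a b^2$, with absolute implied constants, valid for all small $a,b>0$: indeed the product map $U^+\times U^-A\to \SL_2(\RR)$ is a diffeomorphism near $(I,I)$ with Jacobian continuous and nonvanishing there, while in the obvious coordinates $B_a^{U^+}$ has Lebesgue measure $2a$ and $B_b^{U^-A}$ has Lebesgue measure comparable to $b^2$. Summing $m(h_i D_{1/(24e)})=m(D_{1/(24e)})\asymp (24e)^{-3} r_1 r_2^2$ over the $N$ disjoint pieces inside $D_8$, and using $m(D_8)\asymp 8^3 r_1 r_2^2$, gives $N\le C$ for an absolute constant $C$. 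Pushing the cover down by $\pi$ completes the proof. The only genuine care is in keeping the constants in the box-algebra inclusions consistent so that the separation scale stays comfortably above the packing scale; everything else is bookkeeping.
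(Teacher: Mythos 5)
Your proof is correct, but it follows a genuinely different route from the paper's. The paper first constructs an explicit cover of the group-level box $B_{r_1}^{U^+}B_{r_2}^{U^-A}$ by $O(R^3)$ translates of boxes at scale $1/R$, exploiting the Euclidean structure of $U^+\cong\RR$ and $U^-A\cong\RR^2$ separately together with the commutation inclusions of Lemma~\ref{lem:BALLS}; it then picks a point $y_i\in Y$ in each nonempty piece and inflates each piece around $y_i$ via Lemma~\ref{lem:BALLS}(5), which costs a factor of $8$ in the $U^+$ direction and $6$ in the $U^-A$ direction --- this is exactly why $R=8e$ is chosen. You instead run the standard Vitali-type packing argument: a maximal $D_{1/(2e)}$-separated subset of $S$ covers $S$ at scale $1/e$ by quasi-symmetry of the boxes, and its cardinality is bounded by disjointness of the $h_iD_{1/(24e)}$ inside $D_8$ plus the Haar-volume comparison $m(B_a^{U^+}B_b^{U^-A})\asymp ab^2$; your bookkeeping of the constants ($D_r^{-1}\subseteq D_{2r}$, $D_rD_s\subseteq D_{4(r+s)}$, and $4(1/(24e)+1/(12e))=1/(2e)$) checks out, and the reduction to a single unwrapped box in $\SL_2(\RR)$ correctly sidesteps any injectivity-radius issue. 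What each approach buys: the paper's argument is purely combinatorial given Lemma~\ref{lem:BALLS} (no Haar measure needed) and yields the covering number $O(R^3)$ at an arbitrary scale $1/R$, which is the natural quantitative form; your argument is shorter and more robust --- it would work verbatim for any quasi-metric family of neighborhoods with the doubling property --- and it produces centers lying in $Y$ automatically, which is precisely the point the lemma exists to guarantee (cf.\ Remark~\ref{rem:correction}), at the modest cost of the volume estimate for the boxes.
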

\begin{proof}
We first prove a similar claim in $\SL_{2}\left(\RR\right)$, that
there exists a constant $C_{1}$ such that for all $0<r_{1},r_{2}$
small enough and $R\geq1$ we can find $x_{1},...,x_{C_{1}R^{3}}\in\SL_2(\bR)$
such that $B_{r_{1}}^{U^{+}}B_{r_{2}}^{U^{-}A}\subseteq\bigcup x_{i}B_{r_{1}/R}^{U^{+}}B_{r_{2}/R}^{U^{-}A}$.
Since $U^{+}\cong\RR$, given $R'\geq1$ we can find $O\left(R'\right)$
elements $g_{i}\in B_{r_{1}}^{U^{+}}$ such that $B_{r_{1}}^{U^{+}}\subseteq\bigcup g_{i}B_{r_{1}/R'}^{U^{+}}$,
and similarly we can find $O\left((R')^{2}\right)$ elements $h_{j}\in B_{2r_{2}}^{U^{-}A}$
such that $B_{r_{2}}^{U^{-}}\subseteq\bigcup h_{j}B_{r_{2}/R'}^{U^{-}A}$.
Applying Lemma~\ref{lem:BALLS} we obtain that 
\begin{align*}
B_{r_{1}}^{U^{+}}B_{r_{2}}^{U^{-}} & \subseteq\bigcup_{i,j}g_{i}B_{r_{1}/R'}^{U^{+}}h_{j}B_{r_{2}/R'}^{U^{-}A}=\bigcup_{i,j}g_{i}h_{j}\left(h_{j}^{-1}B_{r_{1}/R'}^{U^{+}}h_{j}\right)B_{r_{2}/R'}^{U^{-}A}\\
 & \subseteq\bigcup_{i,j}g_{i}h_{j}B_{2r_{1}/R'}^{U^{+}}B_{6r_{2}/R'}^{U^{-}A}B_{r_{2}/R'}^{U^{-}A}\subseteq\bigcup_{i,j}g_{i}h_{j}B_{2r_{1}/R'}^{U^{+}}B_{14r_{2}/R'}^{U^{-}A}
\end{align*}
Choosing $R'=14R$ finishes the claim.

We now transfer this result to $X_2$. Let $r_{1},r_{2}>0$
small enough, $x\in X_2$ and $Y\subseteq xB_{r_{1}}^{U^{+}}B_{r_{2}}^{U^{-}A}$.
Setting $R=8e$, we can find $O\left(R^{3}\right)=O\left(1\right)$
many $x_{i}\in \SL_{2}\left(\RR\right)$ such that $xB_{r_{1}}^{U^{+}}B_{r_{2}}^{U^{-}}\subseteq\bigcup x\cdot x_{i}B_{r_{1}/R}^{U^{+}}B_{r_{2}/R}^{U^{-}A}$.
Choose $y_{i}$ such that $y_{i}\in Y\cap x\cdot x_{i}B_{r_{1}/R}^{U^{+}}B_{r_{2}/R}^{U^{-}A}$
if this set is not empty and otherwise choose some $y_{i}\in Y$ arbitrarily.
Since $y_{i}\in x\cdot x_{i}B_{r_{1}/R}^{U^{+}}B_{r_{2}/R}^{U^{-}A}$,
applying Lemma~\ref{lem:BALLS} (5) we get that 
\[
x\cdot x_{i}B_{r_{1}/R}^{U^{+}}B_{r_{2}/R}^{U^{-}A}\subseteq y_{i}B_{8r_{1}/R}^{U^{+}}B_{6r_{2}/R}^{U^{-}A}=y_{i}B_{r_{1}/e}^{U^{+}}B_{r_{2}/e}^{U^{-}A}
\]
which completes the proof.
\end{proof}

\begin{proof}[Proof of Lemma~\ref{lem:bowen_control}]
Choose $\eta_{0}\left(M\right)>0$ to be small enough so that Lemmas~\ref{lem:BALLS} and \ref{lem:Ball_cover}
will be applicable and that the map $g\mapsto xg$ from $B_{\eta}\to\Gamma\backslash G$
is injective for all $x\in X^{\leq M}$. Let $\pp=\left\{ P_{0},...,P_{n}\right\} $ be an
$\left(M,\eta\right)$ partition.

Consider the function $f\left(x\right)=\frac{1}{N}\sum_{0}^{N-1}1_{X^{>M}}\left(T^{i}x\right)$
and note that this function is constant on each $P\in\pp_{N}$.

Setting $X'=X^{\leq M}\cap\left\{ x:f\left(x\right)\leq\kappa\right\} $,
we obtain that
{\small
\begin{align*}
1 & \leq\mu\left(X^{>M}\right)+\mu\left(\left\{ f\left(x\right)>\kappa\right\} \right)+\mu\left(X'\right)\leq\mu\left(X^{>M}\right)+\kappa^{-1}\int f\left(x\right)\mathrm{d\mu}+\mu\left(X'\right)\\
&= \mu\left(X^{>M}\right)+\kappa^{-1}\mu^{N}\left(X^{>M}\right)+\mu\left(X'\right),
\end{align*}
}
thus proving part $(3)$ in the theorem.

For $S\in\pp_{N}$, $S\subseteq X'$ set $V_{m}=\left|\left\{ 0\leq i\leq m\;\mid\;T^{i}\left(S\right)\subseteq X^{>M}\right\} \right|$.
Let $C$ be the constant from Lemma~\ref{lem:Ball_cover}.
We claim that $S\subseteq \bigcup_1^{C^{|V_m|}} y_i B_{\eta,N}$ with $y_i\in S$ for any
$0\leq m\leq N$, and the lemma will follow by setting $m=N-1$.
For $m=0$, let $y\in S\subseteq P_{i}\subseteq x_{i}B_{\frac{\eta}{10}}$
for some $i\geq1$, so by Lemma~\ref{lem:BALLS} $S\subseteq yB_{\eta}$,
thus proving the case for $m=0$.

Assume that $S\subseteq \bigcup_1^{C^{|V_m|}} y_i B_{\eta,m}$ with $y_i\in S$ for $m<N-1$ and we prove for $m+1$.
\begin{itemize}
\item Suppose first that $T^{m+1}S\subseteq X^{\leq M}$ so that $T^{m+1}S\subseteq P_{j}\subseteq x_{j}B_{\frac{\eta}{10}}$
for some $j\geq1$. This case will be complete if 
$S\cap y_{i}B_{\eta,m}=S\cap y_{i}B_{\eta,m+1}$ for every $i$. Indeed,
Lemma~\ref{lem:BALLS} implies that $T^{m+1}S\subseteq x_{j}B_{\frac{\eta}{10}}\subseteq y_{i}a^{\left(m+1\right)}B_{\eta}$, so if $y_{i}g\in S$ with $g\in B_{\eta,m}$, then
\[
\left[y_{i}a^{\left(m+1\right)}\right]a^{-\left(m+1\right)}ga^{\left(m+1\right)}=y_{i}ga^{\left(m+1\right)}\in T^{m+1}S'\subseteq y_{i}a^{\left(m+1\right)}B_{\eta}.
\]
By the assumption on the injectivity radius, we conclude that $g\in B_{\eta,m}\cap a^{\left(m+1\right)}B_{\eta}a^{-\left(m+1\right)}=B_{\eta,m+1}$
which is what we wanted to show.
\item Suppose now that $T^{n+1}S\subseteq X^{>M}$. By Lemma~\ref{lem:Ball_cover},
for each $i$ we have that $S\cap y_{i}B_{\eta,m}\subseteq\bigcup_{j=1}^{C}\tilde{y}_{i}^{(j)}B_{\frac{\eta}{e},m}\subseteq\bigcup_{j=1}^{C}\tilde{y}_{i}^{(j)}B_{\frac{\eta}{e},m+1}$ with $\tilde{y}_i^j \in S$, which completes this case and the proof.
\end{itemize}
\end{proof}

\begin{remark}\label{rem:correction} In the original proof of Lemma 4.5 from \cite{einsiedler_distribution_2012}, there was a slight inaccuracy in the final argument where the center of the balls $yB_{\eta,m}$ were not shown to be inside $S$. This inaccuracy is resolved in Lemma~\ref{lem:Ball_cover}.
\end{remark}

\bibliographystyle{plain}
\bibliography{bib}

\end{document}